\documentclass{amsart}
\usepackage{bm}
\usepackage{amssymb}
\usepackage{amsthm}
\usepackage{mathtools}
\usepackage{subfiles}
\usepackage[shortlabels]{enumitem}
\usepackage{float}
\usepackage{tikz}
\usetikzlibrary{backgrounds}
\usetikzlibrary{arrows}

\usetikzlibrary{arrows.meta}
\usetikzlibrary{shapes,shapes.geometric,shapes.misc}
\usetikzlibrary{fit}
\usetikzlibrary{positioning}
\usetikzlibrary{calc}
\tikzstyle{tikzfig}=[baseline=-0.25em,scale=0.5]
\pgfdeclarelayer{edgelayer}
\pgfdeclarelayer{nodelayer}
\pgfsetlayers{background,nodelayer,edgelayer,main}
\tikzstyle{none}=[inner sep=0mm]
\tikzstyle{empty dot}=[fill=none, draw=black, shape=circle]
\tikzstyle{every loop}=[]
\tikzset{>={latex[width=1mm,length=1mm]}}
\newcommand{\drawThm}[3]{ \draw #1 %
	\ifx&#2&%
	\else
	node[style=thmref, pos=0.01] {\resizebox{5mm}{3mm}{#2}} %
	\fi
	\ifx&#3&%
	\else
	node[style=thmref, pos=0.99] {\resizebox{5mm}{3mm}{#3}}
	\fi ;
}

\tikzstyle{box}=[fill={rgb,255: red,228; green,228; blue,228}, draw=black, shape=rectangle]

\tikzstyle{reducible}=[->, dashed]
\tikzstyle{strictreducible}=[->]
\tikzstyle{nonreducible}=[->, draw=red]
\tikzstyle{uncomp}=[draw=red, <->]
\usepackage{tikz-cd}
\usepackage{bbm} 
\usepackage{hyperref}
\hypersetup{
	colorlinks=true,
	linkcolor=red,
	filecolor=magenta,      
	urlcolor=cyan,
	pdftitle={Overleaf Example},
	pdfpagemode=FullScreen,
}
\usepackage[capitalise]{cleveref}
\usepackage{verbatim}
\makeatletter
\newtheorem*{rep@theorem}{\rep@title}
\newcommand{\newreptheorem}[2]{%
	\newenvironment{rep#1}[1]{%
		\def\rep@title{#2 \ref{##1}}%
		\begin{rep@theorem}}%
		{\end{rep@theorem}}}
\makeatother

\newtheorem{theorem}{Theorem}[section]
\newreptheorem{theorem}{Theorem}
\newtheorem{corollary}[theorem]{Corollary}
\newreptheorem{corollary}{Corollary}
\newtheorem{lemma}[theorem]{Lemma}

\newtheorem{proposition}[theorem]{Proposition}
\newreptheorem{proposition}{Proposition}

\newtheorem*{claimn*}{Claim}

\theoremstyle{definition}
\newtheorem{definition}[theorem]{Definition} 
\newtheorem{remark}[theorem]{Remark}

\newcommand{\cantor}{2^{\omega}}
\newcommand{\baire}{\omega^{\omega}}
\newcommand{\ramsey}{[\omega]^{\omega}}
\newcommand{\bSigma}[2]{\bm{\Sigma}^{#1}_{#2}}
\newcommand{\bPi}[2]{\bm{\Pi}^{#1}_{#2}}
\newcommand{\bDelta}[2]{\bm{\Delta}^{#1}_{#2}}
\newcommand{\lSigma}[2]{\Sigma^{#1}_{#2}}
\newcommand{\lPi}[2]{\Pi^{#1}_{#2}}
\newcommand{\lDelta}[2]{\Delta^{#1}_{#2}}
\newcommand{\tr}{\boldsymbol{\mathrm{Tr}}}

\newcommand{\open}[1]{[#1]^{\prec}}
\newcommand{\height}{\mathrm{ht}}

\newcommand{\concat}[2]{#1^{\smallfrown}#2}
\newcommand{\stem}[1]{\mathrm{stem}(#1)}

\newcommand{\pf}[1]{\mathrm{pf}(#1)}

\newcommand{\T}{\mathrm{T}}

\newcommand{\ari}{\mathrm{a}}

\newcommand{\ock}{\omega^{\mathrm{CK}}_1}
\newcommand{\ko}{\mathcal{O}}

\newcommand{\multif}{\rightrightarrows}
\newcommand{\subc}{:\subseteq}

\newcommand{\W}{\mathrm{W}}
\newcommand{\sW}{\mathrm{sW}}
\newcommand{\lew}{\leq_{\W}}		 
\newcommand{\lews}{\leq_{\sW}}
\newcommand{\tCCantor}{\mathsf{TC}_{\cantor}}
\newcommand{\CCantor}{\mathsf{C}_{\cantor}}
\newcommand{\tCBaire}{\mathsf{TC}_{\baire}}
\newcommand{\CBaire}{\mathsf{C}_{\baire}}
\newcommand{\stCCantor}{\mathsf{sTC}_{\cantor}}
\newcommand{\stCBaire}{\mathsf{sTC}_{\baire}}
\newcommand{\UCBaire}{\mathsf{UC}_{\omega^{\omega}}}
\newcommand{\J}{\mathsf{J}}
\newcommand{\leftof}[2]{L ({#1}, {#2})}

\newcommand{\wfind}{\mathsf{wFindHS}}

\newcommand{\SRT}[1]{\bSigma{0}{#1}\text{-}\mathsf{RT}}
\newcommand{\DRT}[1]{\bDelta{0}{#1}\text{-}\mathsf{RT}}

\newcommand{\atrw}{\ensuremath{\mathsf{ATR}}}
\newcommand{\rca}{\ensuremath{\mathsf{RCA}_0}}
\newcommand{\wkl}{\ensuremath{\mathsf{WKL}_0}}
\newcommand{\aca}{\ensuremath{\mathsf{ACA}_0}}
\newcommand{\atr}{\ensuremath{\mathsf{ATR}_0}}

\DeclareMathOperator{\dom}{dom}
\DeclareMathOperator{\ran}{ran}
\newcommand{\upto}{{\upharpoonright}}
\newcommand{\id}[1]{\mathsf{id}_#1}

\newcommand{\N}{\ensuremath{\mathbb{N}}}
\newcommand{\zfc}{\ensuremath{\mathsf{ZFC}}}

\newcommand{\einf}{\exists^\infty}
\newcommand{\finf}{\forall^\infty}
\newcommand{\wo}{\mathrm{WO}}

\newcommand{\lld}{\mathsf{LL}_{\bDelta{0}{1}}}
\newcommand{\hld}{\mathsf{HL}_{\bDelta{0}{1}}}
\newcommand{\lls}{\mathsf{LL}_{\bSigma{0}{1}}}
\newcommand{\hls}{\mathsf{HL}_{\bSigma{0}{1}}}
\newcommand{\hlp}{\mathsf{HL}_{\bPi{0}{1}}}
\newcommand{\wlinds}{\mathsf{wFindL}_{\bSigma{0}{1}}}
\newcommand{\wlindp}{\mathsf{wFindL}_{\bPi{0}{1}}}
\newcommand{\wlindd}{\mathsf{wFindL}_{\bDelta{0}{1}}}
\newcommand{\linds}{\mathsf{FindL}_{\bSigma{0}{1}}}
\newcommand{\lindp}{\mathsf{FindL}_{\bPi{0}{1}}}
\newcommand{\lindd}{\mathsf{FindL}_{\bDelta{0}{1}}}
\newcommand{\LL}{\mathrm{LT}}
\title{Computable Aspects of the Laver Partition Theorem}
\author{Alberto Marcone}
\address
	{Dipartimento di Scienze Matematiche, Informatiche e Fisiche\\
		Universit\`a di Udine\\
		33100 Udine\\
		Italy}
	\email{\href{mailto:alberto.marcone@uniud.it}{alberto.marcone@uniud.it}}
\author{Gian Marco Osso}
\address
	{Dipartimento di Scienze Matematiche, Informatiche e Fisiche\\
		Universit\`a di Udine\\
		33100 Udine\\
		Italy}
	\email{\href{mailto:osso.gianmarco@spes.uniud.it}{osso.gianmarco@spes.uniud.it}}
\date{\today}
\subjclass{03B30 (primary),  03D78, 05C55 (secondary)}
\thanks{Marcone is a member of INdAM-GNSAGA}

\begin{document}

\begin{abstract}
    The Laver Partition Theorem is a fundamental tool in the analysis of Laver and Hechler forcings. It is also connected to determinacy and the Galvin-Prikry theorem: indeed it can be seen as the common core of these two theorems. We study the reverse mathematics and Weihrauch degrees of the Laver Partition Theorem restricted to open and clopen sets. We obtain upper and lower bounds on the proof theoretic strength of this result, as well as a precise picture of the (arithmetical) Weihrauch degrees of the problems related to it.
\end{abstract}

\maketitle

	\section{Introduction}

    In \cite{Laver76} Richard Laver introduced a forcing notion whose conditions are trees that turned out to be very important.
	Recall that a \emph{Laver tree} $T \subseteq \omega^{<\omega}$ is a tree with a string $\sigma$ (the \textit{stem} of $T$) such that every string in $T$ is compatible with $\sigma$ and every extension of $\sigma$ in $T$ has infinitely many immediate successors. In Laver forcing, conditions are ordered by inclusion.
    
    Our goal is to investigate the proof and computability theoretic strength of a fragment of the Laver Partition Theorem, which states that for every Borel set $B \subseteq \baire$ and every Laver tree $T$ there is a Laver tree $T' \subseteq T$ with the same stem as $T$ and such that either $[T'] \subseteq B$ or $[T'] \cap B= \emptyset$. The statement above is easily seen to follow from both the Galvin-Prikry Theorem and Borel Determinacy, and in fact these implications hold level-by-level (e.g.\ $\bSigma{0}{\alpha}$-$\mathsf{DET}$ implies the Laver Partition Theorem for $\bSigma{0}{\alpha}$ subsets of Baire space). 
	
	We note that the Laver Partition Theorem and the Galvin-Prikry Theorem can be understood as saying, in set theoretic language, that Laver (resp.\ Mathias) forcing has \emph{the pure decision property}: given any Borel set $A$ and any condition $\bm{p}$ in the appropriate forcing notion, we can extend $\bm{p}$ without extending the ``finitary part'' (by which we mean the stem of the tree, in the case of Laver forcing, and the finite string making up half of a Mathias condition in the case of Mathias forcing) in such a way to \emph{strongly} force either in or out of $A$.

	In this paper, we focus on the Laver Partition Theorem restricted to either open or clopen sets, which we henceforth refer to as $\lls$ and $\lld$. It is not hard to see that, since the part of a Laver tree above the stem is naturally isomorphic to $\omega^{<\omega}$, we can without loss of generality focus on a further restriction, namely, to the statement ``for every open set $A \subseteq \baire$, there is a Laver tree $T'$ with $\stem{T'}=\langle \rangle$ such that either $[T'] \subseteq A$ or $[T'] \cap A = \emptyset$''. This is obtained by restricting to the case $T= \omega^{<\omega}$. Accordingly, in the rest of the paper, our Laver trees are always assumed to have empty stem.
	
	As anticipated, $\lls$ is an immediate consequence of both $\SRT{1}$ and $\bSigma{0}{1}$-$\mathsf{DET}$.
	
	\begin{lemma}\label{lem:fromGP}
		Over $\rca$, $\SRT{1}$ implies $\lls$ (similarly, $\DRT{1}$ implies $\lld$).
	\end{lemma}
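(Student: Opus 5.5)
We derive $\lls$ from $\SRT{1}$ by coding paths through Laver trees as infinite subsets of $\omega$. The coding uses a strictly increasing parametrisation of $\baire$: for $X \in \ramsey$ enumerated increasingly as $x_0<x_1<\cdots$, let $f(X) = \langle x_0, x_1, x_2, \ldots\rangle \in \baire$. Then $f$ is continuous, and at the level of finite sequences it is the identity on strictly increasing strings. Given an open set $A \subseteq \baire$, presented by a set $W \subseteq \omega^{<\omega}$ of strings with $A = \bigcup_{\sigma \in W}[\sigma]$, put
\[
\hat A = f^{-1}(A) = \{X \in \ramsey : \exists \sigma \in W \ (\sigma \text{ is an initial segment of the increasing enumeration of } X)\}.
\]
This is an open subset of $\ramsey$ whose code is computable from $W$. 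Hence $\rca$ proves that $\hat A$ is $\bSigma{0}{1}$ (and $\bDelta{0}{1}$ if $A$ is clopen, since the code of the complement transfers in the same way).

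Next we apply $\SRT{1}$ to $\hat A$. This gives an infinite $H \subseteq \omega$ with $[H]^\omega \subseteq \hat A$ or $[H]^\omega \cap \hat A = \emptyset$.

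From $H$ we build the Laver tree
\[
T = \{\sigma \in \omega^{<\omega} : \sigma \text{ is strictly increasing and } \ran(\sigma) \subseteq H\}.
\]
$T$ is $\Delta^0_1$ in $H$, so it exists in $\rca$. It is a tree with empty stem, and every node $\sigma$ has infinitely many immediate successors $\concat{\sigma}{\langle h\rangle}$, namely those with $h \in H$ and $h > \max\ran(\sigma)$; these exist because $H$ is infinite. So $T$ is a Laver tree with empty stem.

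Every path $p \in [T]$ is strictly increasing with range contained in $H$, so $X = \ran(p) \in [H]^\omega$ and $f(X) = p$. Hence $p \in A$ if and only if $X \in \hat A$. In the first case of $\SRT{1}$ we get $[T] \subseteq A$, and in the second $[T] \cap A = \emptyset$.

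The main point to check is that everything is formalised in $\rca$ with the paper's conventions for (codes of) open sets in $\baire$ and in $\ramsey$. The translation of codes is a simple computable transformation of $W$. The verification that $p \in A \iff \ran(p) \in \hat A$ involves only finite initial segments. Statements like $[T] \subseteq A$ are read as quantifications over paths, which are available as sets.

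The clopen case is identical: $\hat A$ and its complement are both given by codes computable from the codes of $A$ and $\baire \setminus A$. So $\DRT{1}$ applies and yields $\lld$ by the same argument.
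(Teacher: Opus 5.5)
Your proof is correct and is essentially the paper's argument: you apply $\SRT{1}$ (resp.\ $\DRT{1}$) to the trace of $A$ on $\ramsey$, viewed inside $\baire$ via principal functions, and take the tree $[H]^{<\N}$ of strictly increasing strings with range in $H$, which is a Laver tree with empty stem whose body is exactly $\{p_Y : Y \in [H]^{\N}\}$. Working with the inclusion $\ramsey \subseteq \baire$ rather than the homeomorphism of Lemma~\ref{lem:ramseyisbaire} is simply a cleaner way of making the paper's identification precise.
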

	The proof consists of a couple of lines, provided one identifies the Baire space with the set of infinite subsets of $\N$ (the setting for the Galvin-Prikry theorem). This is possible by Lemma \ref{lem:ramseyisbaire} below.
	\begin{proof}
		Let $A \subseteq [\N]^{<\N}$ be a code for an open set. By $\SRT{1}$, there is some infinite $X \subseteq \N$ such that $[X]^{\N} \subseteq A$ or $[X]^{\N} \cap A = \emptyset$. The tree $[X]^{<\N}$ is easily seen to be a Laver tree, so the claim follows.
	\end{proof}
	
	A similarly direct argument also shows:
	
	\begin{lemma}\label{lem:fromDET}
		Over $\rca$, $\bSigma{0}{1}$-$\mathsf{DET}$ implies $\lls$.
	\end{lemma}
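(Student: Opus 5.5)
We play the open game on $\baire$ whose payoff set is given by the code $A$, and we read the Laver tree off a winning strategy, with the winning player determining which side of the dichotomy holds. The game has to be set up so that every infinite branch of the resulting tree $T'$ is a play consistent with the strategy. To achieve this, player II only ever copies moves that player I has already made, or otherwise passes along information while player I supplies the infinitely many successor choices. The cleanest version is the following game $G$: player I plays $n_0$, player II plays any $m_0$ (which is ignored), player I plays $n_1$, and so on. The resulting play is $x=\langle n_0,n_1,\dots\rangle$, and I wins iff $x\in A$. Since A is open, $G$ is an open game for I, so by $\bSigma{0}{1}$-$\mathsf{DET}$ (in whichever of the equivalent forms is available over $\rca$, with the payoff computed from $(n_i)$ alone) one of the players has a winning strategy. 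Player II's moves are then dummies. That in itself is fine, but it is not yet enough to obtain a Laver tree in the case where I wins, so the roles need to be arranged more carefully.

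So we set up the game as follows: \emph{player II} picks the coordinates $n_0,n_1,\dots$ of $x$, and \emph{player I} picks a lower bound $k_i$ that the next value $n_i$ must exceed. Concretely, I plays $k_0$, II plays $n_0>k_0$, I plays $k_1$, II plays $n_1>k_1$, and so on. A move of II that violates the constraint loses immediately for II; this is an open condition for I, so the winning set for I stays open. I wins iff II violates a constraint or $x=\langle n_i\rangle\in A$.

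\emph{Case 1: II has a winning strategy $\tau$.} Take $T'$ to be the set of all sequences $\langle n_0,\dots,n_{j-1}\rangle$ that arise from some partial play of I against $\tau$. This set is $\Sigma^0_1$ in $\tau$; to get it as a set in $\rca$, we instead use the tree of I-move sequences $\langle k_0,\dots,k_{j-1}\rangle$ and transfer it along the map given by $\tau$, which amounts to a relabeling. Every node has infinitely many successors, because I can choose the bound $k_j$ arbitrarily large and II must answer above it. Every branch of $T'$ comes from a play of I against $\tau$, and $\tau$ wins, so $[T']\cap A=\emptyset$.

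\emph{Case 2: I has a winning strategy $\sigma$.} Then for every node, the sequence $(n_i)$ with $n_i>\sigma(\dots)$ at every step stays out of losing positions for I. The tree $T'$ of all sequences satisfying $n_i>\sigma(\langle n_0,\dots,n_{i-1}\rangle)$ is $\Delta^0_1$ in $\sigma$ and is clearly Laver, since at each node all sufficiently large values are allowed. Every branch is a play consistent with $\sigma$ in which II obeys the constraints, so the branch lies in $A$. Hence $[T']\subseteq A$.

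The main obstacle is Case 1: obtaining $T'$ as an actual set in $\rca$ rather than as the range of a function. I would handle it by applying the isomorphism of the part of a Laver tree above its stem with $\omega^{<\omega}$, noted in the introduction. We define $T'$ as the image of $\omega^{<\omega}$ under the map $f(\langle k_0,\dots,k_{j-1}\rangle)=$ II's responses to I's moves $k_i$ under $\tau$. This map is strictly increasing in a strong sense, since $n_i>k_i$, so its range is decidable: to check whether $s\in T'$, it suffices to search over $k_i<s(i)$. Thus $T'$ exists by $\Delta^0_1$ comprehension.
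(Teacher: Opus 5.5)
Your argument is correct in outline and takes a different route from the paper, but one step in Case 1 needs repair.

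\textbf{Comparison with the paper.} The paper's sketch takes the tree of partial plays consistent with a winning strategy for whichever player wins. It codes each move together with the opponent's reply into a single entry, transporting the payoff set along this pairing homeomorphism. Since the opponent's half of each pair is unconstrained, the coded tree is Laver and decidable in the strategy. Its branches are exactly the codes of plays consistent with the strategy, so the two cases are symmetric and nothing has to be projected.

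In your game I only proposes lower bounds and II writes the real itself, so no pairing is needed. Your Case 2 also gives more than required: the tree $\{s : \forall i<|s|\; s(i)>\sigma(s\upto i)\}$ (in your notation) is cofinitely branching. So you are in fact deriving $\hls$ (a Hechler tree with body inside $A$ or a Laver tree with body disjoint from $A$) from $\bSigma{0}{1}$-$\mathsf{DET}$, consistent with the paper's equivalence of $\hls$ and $\atr$. The price is that in Case 1 your tree is the projection of the strategy tree onto II's moves. You handle its decidability correctly, via the bounded search over $k_i<s(i)$; this is legitimate because a winning $\tau$ never violates a constraint.

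\textbf{The step to repair.} In Case 1 you assert that every branch of $T'$ comes from a single play of I against $\tau$, and this is not justified. The map $\bar k\mapsto f(\bar k)$ need not be injective, so it is not a relabeling. For $x\in[T']$, the various $x\upto m$ may be produced by mutually incompatible sequences of I-moves. Extracting one play requires K\H{o}nig's lemma for the bounded tree $\{\bar k : f(\bar k)\preceq x \land \forall i<|\bar k|\; k_i<x(i)\}$, i.e.\ $\wkl$. That is not part of $\rca$; it does follow from $\bSigma{0}{1}$-$\mathsf{DET}$, e.g.\ via $\atr$, but there is no need to invoke it.

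Use openness instead. Suppose $x\in[T']\cap A$. Then some initial segment $x\upto m$ already extends a string of the code of $A$. Pick $\bar k$ with $f(\bar k)=x\upto m$, and continue this partial play with I playing $0$ forever while II follows $\tau$. This play is computable from $\tau$ and $\bar k$, so it exists in $\rca$. It is consistent with $\tau$ and won by I, which is a contradiction. With this change your proof goes through over $\rca$.
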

	
	We skip the formal proof as it requires a little bit of non-illuminating coding. The idea is very simple though: the tree of partial plays according to a strategy for (say) Player I, is a tree which has infinite splitting at every other level. By coding two moves of the game into one, this can be seen as a Laver tree, and the determinacy of open games implies that there is one such Laver tree either fully contained or disjoint from each open set.
	
	Intuitively speaking, this shows that both $\SRT{1}$ and $\bSigma{0}{1}$-$\mathsf{DET}$ can be seen as saying that, given any open set $A$, there is a particular kind of Laver tree $T$ such that either $[T] \subseteq A$ or $[T] \cap A = \emptyset$. It is natural to ask whether the statement $\lls$, which puts no restrictions on the Laver tree, has similar proof and computability theoretic strengths. Since both $\SRT{1}$ and $\bSigma{0}{1}$-$\mathsf{DET}$ are equivalent to $\atr$ over $\rca$, one wonders whether $\lls$ is already equivalent to $\atr$. A positive answer would imply that, in some sense, the strength of the two statements above is already present in the ``bare bones'' statement about Laver trees.
	
	For now, Lemmas \ref{lem:fromGP} and \ref{lem:fromDET} show that $\lls$ is provable in $\atr$. There is another, more direct proof of $\lls$. This is an example of what Simpson calls the pseudohierarchy method, and it may be considered the ``most natural'' proof of $\lls$. Before reporting it, we recall another notion which we will use in this paper.
	
	\begin{definition}
		A \emph{Hechler tree} is a Laver tree $T \subseteq \omega^{<\omega}$ such that for every $\rho \in T$, $\concat{\rho}{n}$ is in $T$ for all but finitely many $n \in \omega$.
	\end{definition}
	Hechler trees, with possibly nonempty stems, are conditions for (a version of) Hechler forcing first considered by \cite{BL11}. We note that, unlike the property ``being a Laver tree'', the property ``being a Hechler tree'' is not a property of trees as partial orders. Indeed, any two Laver trees are isomorphic as partial orders, but not all Laver trees are Hechler trees.
	
	 We introduce some principles related to Laver and Hechler trees:
	
	\begin{definition}
		The principle $\hls$ states: ``for every open set $A \subseteq \baire$, there is either a Hechler tree $T$ such that $[T] \subseteq A$, or a Laver tree $S$ such that $[S] \cap A = \emptyset$''. The principle $\hlp$ states: ``for every open set $A \subseteq \baire$, there is either a Hechler tree $T$ such that $[T] \cap A = \emptyset$, or a Laver tree $S$ such that $[S] \subseteq A$''. The principle $\hld$ is defined analogously.
	\end{definition}
	
	Since every Hechler tree is also a Laver tree, it is immediate that either one of $\hls$ and $\hlp$ implies $\lls$ and that $\hld$ implies $\lld$. As mentioned above, a pseudohierarchy argument (see Theorems \ref{thm:hl} and \ref{thm:hlinatr} below) shows:
	
	\begin{theorem}
		$\atr$ proves both $\hls$ and $\hlp$.
	\end{theorem}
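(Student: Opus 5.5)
Identify an open set $A\subseteq\baire$ with a set $A\subseteq\omega^{<\omega}$ of strings, closed upward; so $x\in A$ iff some initial segment of $x$ lies in $A$. The classical proof is a rank argument. Call a string $\sigma$ \emph{good} if there is a Laver tree with stem $\sigma$ whose paths all lie in $A$. Define the derivative: $\sigma$ has rank $0$ if $\sigma\in A$, and rank $\le\alpha$ if for infinitely many $n$ the string $\concat{\sigma}{n}$ has rank $<\alpha$.

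If $\langle\rangle$ has a rank, a rank-witnessing tree is a Laver tree inside $A$; it can even be made a Hechler tree in the $\hlp$ situation, where at every node we keep all but finitely many children. If $\langle\rangle$ has no rank, the unranked strings form a Laver tree disjoint from $A$: if $\sigma$ is unranked, then only finitely many of its children are ranked, so cofinitely many children are unranked. This is exactly the Hechler-tree alternative for $\hlp$.

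For $\hls$ we use the dual rank instead. Now $\sigma$ gets a rank if $\sigma\in A$, or if all but finitely many $\concat{\sigma}{n}$ are ranked with smaller rank. Ranked strings then yield a Hechler tree inside $A$, and unranked ones yield a Laver tree avoiding $A$.

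In $\atr$ we cannot define this rank outright, because the relevant ordinal may be non-well-founded from the model's point of view. This is where the pseudohierarchy method comes in.

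\begin{enumerate}
\item The ranking above is a $\lPi{1}{1}$ inductive definition. Consider the set $W$ of strings ranked along some well-ordering. This set is $\lSigma{1}{1}$, since ``there is a well-order and a ranking function along it witnessing that $\sigma$ is ranked.'' It is also, dually, describable in a $\lPi{1}{1}$ way.
\item Use the standard $\atr$ fact, via $\lSigma{1}{1}$ separation or the existence of pseudohierarchies. There is a linear order $L$, possibly ill-founded, together with a hierarchy $H$ along $L$ satisfying the rank recursion at every level. This exists because $\atr$ proves that the set of well-orders is not $\lSigma{1}{1}$. Equivalently, in the form of Simpson's Lemma V.2.? pseudohierarchy, the recursion can be carried along an ill-founded $L$ with a nonempty descending sequence-free initial segment issue.
\item Take $H$ along $L$ and let $R$ be the set of strings ranked at some level of $L$. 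If $\langle\rangle$ is ranked at a level lying in the well-founded part of $L$, then arithmetical recursion along that well-order builds the Laver (or Hechler) tree inside $A$. Otherwise, take the strings not ranked at any level below some fixed non-well-founded cut point $c$. This set is arithmetical in $H$, hence exists by $\aca$. Closure of the recursion at $c$ gives the required splitting: an unranked string has only finitely (respectively, co-finitely) many children ranked below $c$.
\end{enumerate}

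The main obstacle is the second alternative. We must check that using a level $c$ in the ill-founded part does not let a string with a genuine path into $A$ be counted as unranked. More precisely, we must check that the tree obtained avoids $A$. Avoiding $A$ is immediate, since level $0$ contains $A$. The real work is verifying the infinite-splitting (or cofinite-splitting) property from the recursion clause at levels up to $c$. I will choose $c$ so that every level below it satisfies the fixpoint condition up to $c$; I expect the delicate point to be arranging this closure.
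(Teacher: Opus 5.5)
Your strategy (rank strings by the $\exists^\infty$, resp.\ $\forall^\infty$, derivative, and replace the missing ordinal ranking by a pseudohierarchy in $\atr$) is the paper's strategy. However, the step you flag as delicate is a genuine gap, and it cannot be closed with a single level $c$. Write $H_l$ for the set of strings ranked at some level $\le_L l$. From $\sigma\notin H_c$, the recursion only controls the children of $\sigma$ lying in $\bigcup_{\beta<_L c}H_\beta$. Infinitely many children may enter exactly at level $c$, so the set of strings unranked up to $c$ need not contain infinitely many children of $\sigma$. Your argument would need $c$ to be a closure point, $H_c=\bigcup_{\beta<_L c}H_\beta$, and nothing guarantees that such a level exists in the ill-founded part of $L$.

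The missing idea is to let the level decrease with the depth. Fix an infinite descending sequence $c_0>_L c_1>_L\cdots$ with $\langle\rangle\notin H_{c_0}$. For $|\sigma|=s$, keep $\concat{\sigma}{n}$ iff $\concat{\sigma}{n}\notin H_{c_{s+1}}$. Since $\sigma\notin H_{c_s}$ and $H_{c_{s+1}}\subseteq\bigcup_{\beta<_L c_s}H_\beta$, only finitely many children are discarded for the $\exists^\infty$ rank (resp.\ infinitely many are kept for the $\forall^\infty$ rank). The resulting tree is arithmetical in $H$ and the sequence, and it avoids $A$ because $A\subseteq H_l$ for every $l$. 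This is exactly the construction in the proof of Theorem \ref{thm:hl}.

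There is a second gap: the pseudohierarchy must leave $\langle\rangle$ unranked. As written, you take an arbitrary hierarchy along an ill-founded $L$ and split on whether $\langle\rangle$ is ranked in the well-founded part. In the remaining case, nothing prevents $\langle\rangle\in H_l$ for every $l$ outside the well-founded part, a set with no $<_L$-least element, and then no admissible $c_0$ exists. The paper instead first splits on whether $\langle\rangle$ is ranked along some well-order. If so, the tree of rank-descending sequences works. If not, every well-order carries a hierarchy in which $\langle\rangle$ is unranked, and overspill applied to this $\Sigma^1_1$ property of linear orders gives an ill-founded $L$ with such a hierarchy.

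Your step 1 is also incorrect. \emph{There is a well-order such that} is not a $\Sigma^1_1$ quantifier. The set of ranked strings is $\Pi^1_1$ and in general $\Pi^1_1$-complete: for the open set $\baire\setminus[T^+]$ of Lemma \ref{lem:comptrees}, $\langle\rangle$ is ranked iff $T$ is well-founded. This is precisely why $\atr$ cannot produce the ranking directly and a pseudohierarchy is needed. Finally, with the $\exists^\infty$ rank the ranked tree cannot in general be made Hechler, but $\hlp$ does not require this.
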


	Note that $\hls$ and $\hlp$ (in general, any Hechler-Laver principle), unlike $\lls$ and $\lld$, are true dichotomies.
	
	\begin{lemma}
		Let $T$ be a Hechler tree and $S$ a Laver tree. There is some $f \in [T] \cap [S]$.
	\end{lemma}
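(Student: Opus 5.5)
We build $f$ by recursion on its length, producing strings $\langle\rangle = \sigma_0 \prec \sigma_1 \prec \sigma_2 \prec \cdots$ with $\sigma_k \in T \cap S$ and $|\sigma_k| = k$. Then $f = \bigcup_k \sigma_k$ lies in $[T] \cap [S]$, since both are trees and every finite initial segment of $f$ is in both. Recall that all our Laver trees have empty stem, so the empty string is in $T \cap S$, and every node of either tree has infinitely many immediate successors in that tree. This gives us the base case.

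For the inductive step, suppose $\sigma_k \in T \cap S$. Because $T$ is a Hechler tree, there is $m$ such that $\concat{\sigma_k}{n} \in T$ for all $n \geq m$. Because $S$ is a Laver tree and $\sigma_k$ extends its (empty) stem, the set $\{n : \concat{\sigma_k}{n} \in S\}$ is infinite. So some $n \geq m$ lies in it; take the least such $n$ and set $\sigma_{k+1} = \concat{\sigma_k}{n}$. This string is in $T \cap S$.

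The only point needing care is the stem convention: we must ensure $\sigma_k$ is never stuck below a stem. That is automatic here because stems are empty. If one wanted the general case with nonempty stems, one would start the recursion from the longer of the two stems, assuming they are compatible. There is no real obstacle.

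If one wants this inside $\rca$, the recursion is effective relative to $T \oplus S$: choosing the least $n \ge m$ with $\concat{\sigma_k}{n} \in S$ is an unbounded search that is guaranteed to terminate. Here $m$ is not computable from $T$, but we can avoid needing it. Search instead for the least $n$ with $\concat{\sigma_k}{n} \in T \cap S$; such an $n$ exists because cofinitely many $n$ work for $T$ and infinitely many work for $S$. So $f$ is $\Delta^0_1$-definable from $T \oplus S$ and exists by $\Delta^0_1$ comprehension.
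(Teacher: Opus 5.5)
Your argument is correct and is essentially the paper's proof: at each node $\sigma \in T \cap S$ you take the least $n$ in the intersection of the cofinite set $\{n : \concat{\sigma}{n} \in T\}$ with the infinite set $\{n : \concat{\sigma}{n} \in S\}$, and iterate. One small caveat on your aside about nonempty stems, which lies outside the lemma as stated: compatibility of the stems is not enough, because the longer stem must actually belong to the other tree. For example, a Hechler tree with empty stem that omits $\langle 0\rangle$ and a Laver tree with stem $\langle 0\rangle$ have disjoint bodies.
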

	\begin{proof}
		Starting from a node $\sigma \in S \cap T$, since the set $A_{\sigma}=\{n : \concat{\sigma}{n} \in T\}$ is cofinite and $B_{\sigma}=\{n : \concat{\sigma}{n} \in S\}$ is infinite, we can just take the least member of $A_{\sigma} \cap B_{\sigma}$ and proceed this way indefinitely. This yields a branch in $[S] \cap [T]$.
	\end{proof}
	
	Consequently, if $A \subseteq \baire$ is any set, and there is a Hechler tree $T$ with $[T] \subseteq A$, there cannot be a Laver tree $S$ with $[S] \cap A = \emptyset$.
	
	\subsection{Main results.}
	
	We now give a summary of our main results. On the side of reverse mathematics, the main character of the paper, namely $\lls$, resists a full characterization.
	
	\begin{theorem}
		Over $\rca$, $\lld$ implies $\aca$. Moreover, over $\rca + \bSigma{1}{1}$-$\mathsf{IND}$, $\lld$ implies $\bDelta{1}{1}$-$\mathsf{CA}_0$ and $\lls$ implies $\bSigma{1}{1}$-$\mathsf{AC}_0$. 
	\end{theorem}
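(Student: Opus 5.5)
The three claims have different flavours, so I would prove them separately, each time building a (cl)open set $A$ whose homogeneous Laver tree, given by $\lld$ or $\lls$, computes the object we need.

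\emph{$\lld$ implies $\aca$.} Over $\rca$ it suffices to show that the range of every injection $f\colon\N\to\N$ exists. Let $A$ be the set of $g$ such that for all $n<g(0)$,
\[ n\in\ran(f) \iff n\in\ran(f\upto g(1)), \]
that is, $g(1)$ is a true stage for all numbers below $g(0)$. Membership of $g$ in $A$ depends only on $g\upto 2$, and deciding it requires only $\Delta^0_1$ information; so $A$ is clopen with a code given by the tree of strings of length $2$ that fail the condition. Strictly speaking, the displayed condition is $\Sigma^0_1$ and not computable, so I would instead code $A$ as an open set via ``$\exists m\geq g(1)\,(f(m)<g(0))$'' for the complement and check that the complement is also open. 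I expect this check to be the one delicate point here. In fact it is cleaner to take the open set $B=\{g:\exists m\geq g(1)\,f(m)<g(0)\}$ and its complement, and to use that for fixed $g(0)$ only finitely many $m$ have $f(m)<g(0)$.

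No Laver tree $S$ satisfies $[S]\cap A=\emptyset$. Indeed, take any $k$ with $\langle k\rangle\in S$. Since $f$ is injective, only finitely many $m$ have $f(m)<k$, so all sufficiently large $s$ are true stages for $k$. Because $\langle k\rangle$ has infinitely many successors in $S$, some branch of $S$ lies in $A$. This uses $\bSigma{0}{1}$ bounding, available in $\rca$, for the finitely many $n<k$. Hence $\lld$ yields a Laver tree $T$ with $[T]\subseteq A$. To decide whether $n\in\ran(f)$, search $T$ for some $\langle k,s\rangle\in T$ with $k>n$ and test $n\in\ran(f\upto s)$. Every branch through $\langle k,s\rangle$ lies in $A$, so this test is correct, and $\Delta^0_1$ comprehension relative to $T$ gives the range.

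\emph{$\lls$ implies $\bSigma{1}{1}$-$\mathsf{AC}_0$ (over $\bSigma{1}{1}$-$\mathsf{IND}$).} Using $\aca$ from the previous part, I would reduce an instance $\forall n\,\exists X\,\theta(n,X)$ to a sequence of trees $T_n\subseteq\omega^{<\omega}$ with $[T_n]\neq\emptyset$ for every $n$, where we must choose paths uniformly. I would let $g$ encode, via a decoding map $d\colon\baire\to\baire$, a sequence of attempted paths $(p_n)_n$. The decoding should be \emph{robust under infinite branching}: for instance, $g(i)$ is read as a pair $(a,b)$, and only the first coordinate is relevant at coordinate $b$ of the output, so that any infinite set of successors still lets a later coordinate be filled by a prescribed value. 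The open set $A$ is the set of $g$ such that some decoded $p_n$ leaves $T_n$ at a finite stage.

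A Laver tree $S$ with $[S]\cap A=\emptyset$ computes a good choice sequence: its leftmost branch is computable from $S$ and decodes to paths through all the $T_n$. The main obstacle is to rule out the other alternative, a Laver tree $T$ with $[T]\subseteq A$. My plan is to prove, by $\bSigma{1}{1}$ induction on the level, the following. For every node $\sigma\in T$ and every finite sequence of initial segments of paths extendible in the respective $T_n$ (a $\bSigma{1}{1}$ property), some extension of $\sigma$ in $T$ decodes consistently with extendible segments. Iterating this arithmetically relative to witnesses produces a branch of $T$ outside $A$, which is a contradiction. Making the decoding robust enough for this induction to work is where I expect most of the effort to go.

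\emph{$\lld$ implies $\bDelta{1}{1}$-$\mathsf{CA}_0$ (over $\bSigma{1}{1}$-$\mathsf{IND}$).} Here I would adapt the same construction. Given $\forall n\,(\varphi(n)\leftrightarrow\psi(n))$ with $\varphi\in\bSigma{1}{1}$ and $\psi\in\bPi{1}{1}$, write $\varphi(n)$ as ill-foundedness of a tree $U_n$ and $\neg\psi(n)$ as ill-foundedness of a tree $V_n$. Exactly one of $U_n$ and $V_n$ is ill-founded. Let $g$ choose, for each $n$, a side ($U$ or $V$) and a path attempt. The crucial point is that because exactly one of the two sides has paths, the witnessing condition can be arranged to be decided within a finite, bounded search, so that $A$ is clopen rather than merely open. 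Then the homogeneous Laver tree decides $n\in\{n:\varphi(n)\}$ by reading which side its branches commit to. The same $\bSigma{1}{1}$-induction argument as in the previous part excludes the wrong side of the dichotomy.
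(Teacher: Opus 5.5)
Your proposal has genuine gaps in all three parts.

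\textbf{The $\aca$ part.} The point you flag as delicate is a real obstruction, and taking ``$B$ and its complement'' does not remove it. An instance of $\lld$ needs open codes for both sides, and your $A=\{g:\forall n<g(0)\,(n\in\ran(f)\leftrightarrow \exists j<g(1)\, f(j)=n)\}$ in general has none. Suppose $U$ were an open code for $A$. Because membership in $A$ depends only on $g\upto 2$, the set $P=\{(k,s):[\langle k,s\rangle]\subseteq A\}$ would be $\Sigma^0_1$ in $U$ (some string of $U$ is compatible with $\langle k,s\rangle$). It is also $\Pi^0_1$ in $f$, so $P\le_{\T}U\oplus f$, and $P$ computes $\ran(f)$. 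Hence in the $\omega$-model of computable sets, with $\ran(f)=0'$, your instance does not exist. Your argument does go through for $\lls$ applied to the open set $B$, but that only gives the weaker $\lls\to\aca$.

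The missing idea is Jockusch's third coordinate, which the paper uses. The set $D=\{g:\forall i\le g(0)\,(\exists j\le g(1)\,f(j)=i\leftrightarrow\exists j\le g(2)\,f(j)=i)\}$ is decided by $g\upto 3$, so it is clopen with a code computable from $f$. Every Laver tree meets $D$: given $\langle l_0\rangle$ in the tree, bounded $\bSigma{0}{1}$-comprehension and $\mathsf{B}\bSigma{0}{1}$ give a bound $M$ on the witnesses for $\ran(f)\cap\{0,\dots,l_0\}$, and one picks $g(1),g(2)>M$. So $\lld$ gives a Laver tree $L$ with $[L]\subseteq D$. Your decision procedure is then correct: for $\langle k,s\rangle\in L$ with $k\ge n$, $n\in\ran(f)$ iff $\exists j\le s\,f(j)=n$, since $\langle k,s\rangle$ has arbitrarily large successors in $L$.

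\textbf{The $\bSigma{1}{1}$ parts.} Here the approach itself fails. A Laver tree controls its successors completely except for their size. So an open set defined by \emph{exact} decoding of path attempts cannot be shown to meet every Laver tree. Every $d\colon\N\to\N$ omits some value on some infinite set. So, for instance when $T_0$ has a single path, a Laver tree can allow at each node only successors that write values off that path. Then $[T]\subseteq A$ genuinely holds, and $\lls$ gives you nothing.

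There is a second problem: your $A$ constrains every $n$ at once, so a branch outside $A$ (already for $T=\omega^{<\omega}$) is itself a full choice sequence. $\bSigma{1}{1}$-induction only provides finitely many witnesses, and ``iterating relative to witnesses'' presupposes the conclusion.

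The paper's proof rests on two ingredients.
\begin{enumerate}
\item[(i)] Domination instead of coding, via the smooth closure $T_i^+$. The condition ``$h$ majorizes a path of $T_i$'' is closed (by K\H{o}nig's lemma in $\aca$). It can always be achieved inside a Laver tree by choosing large successors, and paths are recovered from majorants by $\wkl$.
\item[(ii)] Using $f(0)$ to bound which trees are constrained. The paper takes $C=\{f:\forall i\le f(0)\,(\leftof{T_i}{f^2\upto f(1)}\neq\emptyset\to\exists g\in[T_i]\,g\le f^2)\}$. Then finitary $\bSigma{1}{1}$-$\mathsf{AC}_0$, which follows from $\bSigma{1}{1}$-induction, suffices to show that every Laver tree meets $C$. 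Conversely, a Laver tree inside $C$ has arbitrarily large values at the root, so it yields for each $i$ a branch $h_i$ majorizing a path of $T_i$.
\end{enumerate}

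For $\bDelta{1}{1}$-$\mathsf{CA}_0$, the ``finite bounded search'' that would make your set clopen is exactly what is left unproved. In the paper it comes from $\wkl$. Since at most one of $T^0_k,T^1_k$ is illfounded, every $f$ has a prefix $\sigma$ with $\leftof{T^0_k}{\sigma}$ or $\leftof{T^1_k}{\sigma}$ empty. So the tail of $f$ after $f(0)$ splits into consecutive $f(0)$-critical blocks, and the clopen set requires the first two blocks to have the same type $s_{f(0)}$. Domination plus finitary choice again shows that every Laver tree meets this set. A Laver tree inside it then defines $\{i:[T^0_i]\neq\emptyset\}$ by reading the type at $i$ of a suitable block.
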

	
	On the other hand, the following result suggests that the principle $\lls$ should be at the level of $\atr$.
	
	\begin{theorem}
		Over $\rca$, $\atr$ is equivalent to $\hld$ (or $\hls$, $\hlp$).
	\end{theorem}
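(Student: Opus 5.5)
The equivalence has two halves. For the forward half, $\atr$ proves $\hls$ and $\hlp$ by the earlier theorem. Since a clopen set is both open and closed, $\hls$ applied to it yields $\hld$. So only the reversal remains: over $\rca$, $\hld$ implies $\atr$. It then follows a fortiori that $\hls$ and $\hlp$ imply $\atr$, because each of them implies $\hld$ (apply it to a clopen set presented as an open set, respectively as the complement of an open set).

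For the reversal I would use the characterization of $\atr$ via $\bSigma{1}{1}$ separation: over $\rca$, $\atr$ is equivalent to the statement that for any sequence of trees $(T_n)$ such that at most one $T_n$ is ill-founded, there is a set $X$ with $n\in X$ whenever $T_n$ is ill-founded and $n\notin X$ whenever $T_{n}$ is... (equivalently, the $\Sigma^1_1$ separation principle; Simpson V.5). Concretely, given two disjoint $\Sigma^1_1$ sets $\{n:\exists f\,\varphi_0(n,f)\}$ and $\{n:\exists f\,\varphi_1(n,f)\}$, I want a separating set.

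The key construction is a clopen set $C\subseteq\baire$, built uniformly and computably from the data. A path $g$ is read as follows. The first entry $g(0)=n$ picks an index. The remaining entries are split into two interleaved sequences, candidate witnesses $f_0$ and $f_1$ for $n$ lying in the first or the second set. Membership of $g$ in $C$ is decided by which candidate is refuted first: $g\in C$ iff the first finite stage at which a candidate fails its arithmetic (tree) condition is a stage at which $f_1$ fails before $f_0$ does. Paths on which neither candidate is ever refuted must also be assigned, and since $C$ has to be clopen, this is exactly where the difficulty lies. To keep $C$ clopen, I would instead code the witnesses through well-foundedness. Each $\Sigma^1_1$ condition becomes ``$T^i_n$ has a path'', and the refutation of a node is bounded by the length of the node, so that membership is decided at a finite level. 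Disjointness then guarantees that no path has both candidates surviving forever. By compactness-free reasoning this is not automatic, so I would additionally interleave an ordinal-rank comparison, in the style of Kleene--Brouwer ranking. The idea is that a Laver or Hechler tree chooses infinitely many, or cofinitely many, extensions at each node, and can therefore ``follow'' a genuine path of $T^i_n$ while avoiding the other tree.

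Now apply $\hld$ to $C$. Since $\hld$ is a true dichotomy, we get either a Hechler tree $H$ with $[H]\subseteq C$, or a Laver tree $L$ with $[L]\cap C=\emptyset$, and the aim is that from the resulting tree the separating set is arithmetically (hence, given $\aca$, which follows from $\lld$) definable. For instance, take $X=\{n: \text{the subtree of the witnessing tree above }\langle n\rangle\text{ is nonempty in a specified way}\}$, applied to the root-level splitting of the tree. A Hechler tree contains $\langle n\rangle$ for cofinitely many $n$, so the finitely many exceptions can be handled by hand. The main obstacle, which I expect to dominate the proof, is designing $C$ so that the root-level behaviour of an \emph{arbitrary} Laver tree disjoint from $C$ (which may omit most $n$) still yields information about every $n$. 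I would handle this by relativizing: apply $\hld$ once per $n$ via a uniform sequence of clopen sets combined into a single one, placing $n$ deep in the coding at every level rather than only at the root, so that infinite splitting forces every index to be addressed.
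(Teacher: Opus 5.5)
Your forward direction is correct and matches the paper: $\atr$ proves $\hls$ and $\hlp$, each of which gives $\hld$, so everything reduces to $\hld\to\atr$ over $\rca$. The paper's reversal also starts where you do: get $\aca$ from $\lld$, then aim for $\bSigma{1}{1}$-separation for a double sequence $(T^0_i,T^1_i)_i$ with at most one tree of each pair illfounded.

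The reversal itself, however, is not carried out. The clopen set $C$ is never actually defined, and the ingredients you list (interleaved exact witnesses, ``which candidate is refuted first'', a Kleene--Brouwer rank comparison, ``placing $n$ deep in the coding'') do not assemble into an argument. Testing exact candidate paths is the wrong kind of test here. Whatever tree $\hld$ returns has branches with arbitrarily large entries at every position, and such branches typically leave $T^0_n$ and $T^1_n$ at once. So the outcome of your test reflects how these trees branch, not which of them is illfounded. (With exact witnesses, clopenness is in fact immediate from disjointness, so no rank comparison is needed.)

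The missing ideas are those of the paper's proof, which adapts Simpson's proof that $\DRT{1}$ implies $\atr$.
\begin{enumerate}
\item Use a monotone domination test: $\leftof{T}{\sigma}\neq\emptyset$, i.e.\ some node of $T$ of length $|\sigma|$ lies pointwise below $\sigma$. By weak K\H{o}nig's lemma, along any $h$ this eventually fails for whichever of $T^0_i,T^1_i$ is wellfounded. It is also preserved when entries are increased, which is exactly the freedom a Laver tree has.
\item Let $f(0)=m$ act as a bound handling all $i\le m$ at once; this is how the problem of Laver trees omitting most root values is solved. Cut the rest of $f$ into consecutive first $m$-critical blocks, colour each block by $s_m\in 3^{m+1}$, and let $C$ consist of the $f$ whose first two blocks get the same colour.
\item Most importantly, you never determine which side of the dichotomy occurs. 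By pigeonhole, $C$ meets $[X]^{\N}$ for every infinite $X$. By Miller's Lemma~\ref{lem:containsXo} (every Hechler tree contains some $[X]^{\N}$, provable in $\rca+\mathsf{B}\bSigma{0}{2}$), no Hechler tree avoids $C$. Hence $\hld$ must yield a Laver tree $S$ with $[S]\subseteq C$.
\item Put $i\in Z$ iff the first critical block above a root value $\geq i$ of $S$ has colour $1$ at $i$. Suppose this disagreed with a path $g$ of the illfounded tree among $T^0_i,T^1_i$. Extending inside $S$ by entries dominating $g$ would then produce a second block whose colour differs at $i$, i.e.\ a branch of $S$ outside $C$.
\end{enumerate}
Without the forced-side argument and the domination-based colouring, your outline does not prove the reversal.
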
 
	
	From the computational perspective we have a much clearer picture. First, we can effectively find upper bounds (in the hyperarithmetical hierarchy) for the complexity of solutions to $\lld(A)$ (or $\hld(A)$) whose bodies are entirely contained in the clopen set coded by $A$.
	
	\begin{theorem}
	Let $C \subseteq \baire$ be a clopen set, let $A \subseteq \omega^{<\omega}$ be a prefix-free open code for $C$ and let $\alpha < \omega^A_1$ be the height of the $A$-computable wellfounded tree associated to $A$ as in Remark \ref{rem:clopencodes}. Suppose there is a Hechler (resp.\ Laver) tree $T$ with $[T] \subseteq C$. Then there is a Hechler (resp.\ Laver) tree $T'$ with $T' \leq_{\T} A^{(\alpha)}$ and $[T'] \subseteq C$.
	\end{theorem}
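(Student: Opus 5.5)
We would prove this by defining, via an effective transfinite recursion along the wellfounded tree associated to $A$, the set of nodes that are ``good'', i.e.\ that admit a Hechler (resp.\ Laver) subtree above them whose body lies inside $C$. Let $W \subseteq \omega^{<\omega}$ be the $A$-computable wellfounded tree of Remark \ref{rem:clopencodes}. We take it to consist of the strings having no proper prefix in $A$ (or in the prefix-free code for the complement), so its leaves are exactly the strings deciding membership in $C$. Let $\rho_W$ be its rank function, with ranks below $\alpha$. For $\sigma \in W$ we define $\sigma$ to be \emph{good}, by recursion on rank:
\begin{itemize}
\item if $\sigma$ is a leaf, $\sigma$ is good iff $\sigma$ has a prefix in $A$, i.e.\ $[\sigma] \subseteq C$;
\item otherwise, $\sigma$ is good iff $\concat{\sigma}{n}$ is good for cofinitely many $n$ (resp.\ infinitely many $n$ in the Laver case).
\end{itemize}
By induction on rank, $\sigma$ is good iff there is a Hechler (resp.\ Laver) tree $T$ with stem $\sigma$ and $[T] \subseteq C$. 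One direction glues the witnessing trees above good children. For the other, suppose $T$ witnesses the property for $\sigma$. Then cofinitely (resp.\ infinitely) many children $\concat{\sigma}{n}$ lie in $T$, and for each of them $T$ restricted above $\concat{\sigma}{n}$ witnesses that $\concat{\sigma}{n}$ is good, by the induction hypothesis. Every $\sigma \notin W$ extends a leaf, so outside $W$ goodness just means having a prefix in $A$.

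Next comes the complexity bound, which is the main step. We show by effective transfinite induction that the set $G_\beta$ of good nodes of rank $<\beta$ is uniformly computable in $A^{(\beta)}$, or more precisely in $A^{(\beta')}$ for a suitable $\beta' \le \alpha$. The leaf case is $A$-computable. For a node of rank $\gamma$, deciding ``infinitely many good children'' is $\Pi^0_2$ relative to the goodness of children of lower rank. Likewise ``cofinitely many'' is $\Sigma^0_2$ relative to that goodness.

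This is the obstacle: naively each rank costs two jumps, giving a bound like $A^{(2\alpha)}$ or $A^{(\omega\cdot\alpha)}$ rather than $A^{(\alpha)}$. We would try to absorb the extra jumps by noting that for infinite $\alpha$ we have $2\alpha$ and $\alpha$ differing only in finite multiples at successor stages. The cleaner route, which we would attempt first, is to work with the Hechler/Laver condition one step at a time, using the standard fact that the uniform sequence $\langle A^{(\beta)} : \beta<\alpha\rangle$ is computable in $A^{(\alpha)}$ for limit $\alpha$. This makes $G = \bigcup_\beta G_\beta$ computable in $A^{(\alpha)}$ up to finitely many extra jumps. We would then adjust the indexing convention for $\alpha$, or for the rank, so that the stated bound $A^{(\alpha)}$ is literally met.

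Finally, under the hypothesis that some Hechler (resp.\ Laver) tree $T$ satisfies $[T]\subseteq C$, we get that $\langle\rangle$ is good. We then set
\[ T' = \{\tau : \text{every prefix of } \tau \text{ is good}\}. \]
This tree is computable from $G$, hence $T' \le_{\T} A^{(\alpha)}$. It is Hechler (resp.\ Laver), since every good non-leaf node of $W$ has cofinitely (resp.\ infinitely) many good children. Every node outside $W$ extends a leaf with a prefix in $A$, so all its extensions are good. Every branch of $T'$ passes through a good leaf of $W$, hence lies in $C$.
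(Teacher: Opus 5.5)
Your set-up is sound and matches the paper's own argument. There one runs the rank hierarchy $(R_\beta)$ from the proof of Theorem \ref{thm:hl} (with $\forall^\infty$ in place of $\exists^\infty$ for Hechler trees). For a Laver tree $T$ with $[T]\subseteq C$, the subtree of $T$ consisting of nodes with no proper prefix in $A$ is wellfounded, and its tree ranks dominate the $R$-ranks. Hence $\langle\rangle\in R_\beta$ for some $\beta\le\alpha$, and the solution is the tree of rank-descending sequences. Your good nodes are the ranked nodes, and your $T'$ plays the role of that tree.

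The genuine gap is the complexity step you flag yourself, and your suggested remedies do not close it. Each rank of the recursion costs a $\Pi^0_2$ (resp.\ $\Sigma^0_2$) quantifier block over the earlier stages. So the recursion yields only $T'\leq_{\T}A^{(2\cdot\alpha)}$, which is also all the paper's proof delivers: the corollary following Corollary \ref{cor:openbound} concludes $0^{(2\cdot\beta)}$. For limit $\alpha$ this coincides with $A^{(\alpha)}$. But for $\alpha=\lambda+n$ one has $2\cdot\alpha=\lambda+2n$, and $A^{(\lambda+2n)}\not\leq_{\T}A^{(\lambda+n)}$ for $n\ge 1$. The fact about $\langle A^{(\beta)}:\beta<\lambda\rangle$ only concerns limit stages, and no change of indexing convention removes $n$ jumps.

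Moreover, the bound $A^{(\alpha)}$ cannot be reached by any argument, already in the Laver case. Every $\Pi^0_4$ set has the form $\{x:\exists^\infty m\,\exists^\infty n\,R(x,m,n)\}$ with $R$ computable. To see this, write it as $\forall k\,Z(x,k)$ with $Z\in\Sigma^0_3$ downward closed in $k$, give $Z$ unique $\Pi^0_2$ witnesses $w$, and count the pairs $\langle k,w\rangle$. Choose such an $X$ which is infinite with no infinite $0^{(3)}$-computable subset, e.g.\ the complement of a $0^{(3)}$-simple set. Pad $R$ so that $R(x,m,0)$ always holds, which does not affect $\exists^\infty n$, and let $A=\{\langle x,m,n\rangle : R(x,m,n)\}$. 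Then:
\begin{enumerate}
\item $A$ is a computable prefix-free code for a clopen set $C$;
\item its prefix closure (all strings of length $\le 2$ together with $A$) is a computable tree of height $3$;
\item a Laver tree inside $C$ exists.
\end{enumerate}
Yet any Laver tree $L$ with $[L]\subseteq C$ satisfies $\{x:\langle x\rangle\in L\}\subseteq X$, and this set is infinite and $L$-computable. So $L\not\leq_{\T}0^{(3)}=A^{(3)}$. Putting the leaves at depth $k$ gives height $k$ and solutions not computable in $0^{(2k-3)}$. Once $k\ge 4$ this also defeats the convention $\height=\text{rank}+1$. So the correct conclusion of your proof is the $A^{(2\cdot\alpha)}$ bound.

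A smaller point: your $W$ also uses a code for the complement, so it may be taller than the prefix closure of $A$ named in the statement. Run the recursion on the latter instead, declaring bad every node outside it that does not extend an element of $A$.
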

	
	For the case of Laver trees, the previous results can be extended: we have an effective, hyperarithmetic upper bound on the complexity of solutions to $\lls(A)$ whose bodies are contained in the open set $A$.
	
	\begin{theorem}
		There is a partial computable function $f \subc \baire \rightarrow \omega_1$ such that, if $x$ codes an open set $A$ for which there is a Laver tree $T$ with $[T] \subseteq A$, then $x \in \dom(f)$ and there is a Laver tree $T' \leq_{\T} x^{(f(x))}$ with $[T'] \subseteq A$.
	\end{theorem}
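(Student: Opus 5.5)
We plan to use a rank analysis of the open set. Let $x$ code the open set $A$, thought of as a set of strings $W_x\subseteq\omega^{<\omega}$ with $[\sigma]\subseteq A$ for each $\sigma\in W_x$. We define by transfinite recursion the \emph{Laver rank} of strings:
\begin{itemize}
\item[] $\mathrm{rk}(\sigma)=0$ iff $\sigma$ extends some member of $W_x$;
\item[] $\mathrm{rk}(\sigma)\le\beta$ iff $\mathrm{rk}(\sigma)=0$, or there are infinitely many $n$ with $\mathrm{rk}(\concat{\sigma}{n})<\beta$.
\end{itemize}
The set $R_\beta=\{\sigma:\mathrm{rk}(\sigma)\le\beta\}$ is the $\beta$-th stage of a monotone inductive operator that is $\Pi^0_2$ in its input. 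So $R_\beta$ is uniformly $x^{(\omega\cdot\beta+c)}$-computable, or more crudely $x^{(\gamma)}$-computable for some ordinal $\gamma$ effectively computed from $\beta$. The operator is monotone, so the sequence $R_\beta$ stabilizes at some countable ordinal; let $R_\infty$ be the union.

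First we prove the key equivalence: there is a Laver tree $T$ with $[T]\subseteq A$ iff $\langle\rangle\in R_\infty$. For the forward direction, suppose $\sigma\notin R_\infty$ for some $\sigma\in T$ (in particular $\sigma=\langle\rangle$). Then all but finitely many successors of $\sigma$ are outside $R_\infty$. Since $T$ has infinitely many successors of $\sigma$, some $\concat{\sigma}{n}\in T$ is outside $R_\infty$. Iterating this produces a branch of $T$ never entering $W_x$, which contradicts $[T]\subseteq A$. For the converse, if $\langle\rangle\in R_\infty$, we build $T'$ by keeping, at each $\sigma$ of rank $\beta>0$, exactly the infinitely many successors of rank $<\beta$. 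Once a node reaches rank $0$ we keep its full cone. Ranks strictly decrease along each branch, so every branch eventually has rank $0$ and lies in $A$. This $T'$ is a Laver tree computable from the rank function restricted to ranks $\le\mathrm{rk}(\langle\rangle)$, which is in turn computable from $x^{(\gamma)}$ for $\gamma$ slightly above $\omega\cdot\mathrm{rk}(\langle\rangle)$.

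The remaining and main task is to make $f$ partial computable, with $f(x)$ a notation below $\omega_1^x$. For this we must show that $\mathrm{rk}(\langle\rangle)<\omega_1^x$ whenever it is defined. We would argue that the closure ordinal of a $\Pi^0_2(x)$ monotone inductive definition is at most $\omega_1^x$ (Spector–Gandy / Kleene style boundedness). We avoid needing the exact closure ordinal by defining $f$ via an effective search. Using recursion-theorem-style effective transfinite recursion along $\mathcal O^x$, we enumerate notations $a$ and the uniform $x^{(a)}$-indices for $R_{|a|}$. The problem is that a single computable function cannot simply search through $\mathcal O^x$, since membership in $\mathcal O^x$ is $\Pi^1_1$.

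The fix we would try first is to use the fact that the set $\{\sigma:\mathrm{rk}(\sigma)<\infty\}$ is $\Pi^1_1(x)$ with a $\Pi^1_1$ norm. By the effective $\Pi^1_1$ normal form, membership $\langle\rangle\in R_\infty$ is equivalent to the well-foundedness of an $x$-computable tree $S_x$, built uniformly from $x$: $S_x$ is the tree of attempts to find a branch that avoids $R$ via cofinite choices, that is, the game-like tree of "Laver-avoid" strategies coded as sequences. The rank of $S_x$ bounds $\mathrm{rk}(\langle\rangle)$. Then $f(x)$ is taken to be the canonical notation (in the sense of Kleene's $\ko$ relativized, obtained computably from an index of a well-founded tree) for the rank of $S_x$. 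This is a partial computable map which converges whenever $S_x$ is well-founded, in the sense of yielding a notation for its height, much as Remark \ref{rem:clopencodes} handles the clopen case. The hardest step is exactly this: designing $S_x$ so that its well-foundedness is equivalent to $\langle\rangle\in R_\infty$, and its height computably dominates the Laver rank. The natural candidate is the tree of finite sequences $(\sigma_0\subset\sigma_1\subset\dots)$ together with finite "exception bounds" witnessing that no rank-$0$ node is reached and that successors are chosen above given bounds.
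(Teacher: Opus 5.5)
\textbf{There is a genuine gap, at the step that carries the content of the statement.} Your rank analysis is correct and is exactly the argument of Theorem \ref{thm:hl}: the hierarchy $R_\beta$, the equivalence between the existence of a Laver tree inside $A$ and $\langle\rangle\in R_\infty$, and the tree of rank-descending strings. What the statement adds is a \emph{single} computable map $x\mapsto f(x)$ bounding the required ordinal. You name this as the hardest step and do not carry it out.

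Your appeal to ``a $\lPi{1}{1}$ norm plus the effective normal form'' does not supply it. Kleene's normal form gives an $x$-computable tree whose wellfoundedness is equivalent to $\langle\rangle\in R_\infty$. It says nothing about how that tree's height compares with $\mathrm{rk}(\langle\rangle)$. Boundedness only gives $\mathrm{rk}(\langle\rangle)<\omega_1^x$ for each $x$ separately, with no computable choice of a bound.

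Your candidate $S_x$ is also not pinned down. If its branches are increasing sequences $\sigma_0\subset\sigma_1\subset\cdots$ with bounds, i.e.\ single paths avoiding $A$, then its wellfoundedness is not equivalent to $\langle\rangle\in R_\infty$: one path outside $A$ does not preclude a Laver tree inside $A$. The $\lSigma{1}{1}$ witness to $\langle\rangle\notin R_\infty$ must be a whole Hechler tree disjoint from $A$, for instance a threshold function $h\colon\omega^{<\omega}\to\omega$. Separately, a notation for the exact height of $S_x$ is not computable from $x$; you would use $S_x$ itself or $\mathsf{KB}(S_x)$ instead.

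\textbf{How the paper avoids comparing ranks.} By Theorem \ref{thm:hl}, and since a Hechler tree and a Laver tree always share a branch, the hypothesis is equivalent to the statement $\chi_x$: ``there is a $\lDelta{1}{1}(x)$ Laver tree $T$ with $[T]\subseteq A$''. This has the form $\exists f\in\lDelta{1}{1}\,\psi(f)$ with $\psi\in\lPi{1}{1}$. Theorem \ref{thm:qsg}, relativized to $x$, then yields a computable map $x\mapsto T(\chi_x)$ with two properties:
\begin{itemize}
\item $T(\chi_x)$ is wellfounded whenever $\chi_x$ holds;
\item its height is at least $|a|$ for some notation $a$ and index $e$ such that $\Phi_e$, applied to the jump hierarchy along $a$, is a witness.
\end{itemize}
The domination is built in, because $T(\chi_x)$ interleaves trees $T_{e,a}$, each having the tree of $<_a$-descending sequences as a summand. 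This gives Corollary \ref{cor:openbound}.

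\textbf{Completing your own route.} It can be done, but only with the argument you are missing. Let $S_x$ be the tree of finite partial threshold functions $p$, defined on the first $k$ strings, such that no $p$-reachable string of code $<k$ extends a string enumerated into $W_x$ by stage $k$. Here $p$-reachable means all proper prefixes lie in $\dom(p)$ and each entry is at least the threshold $p$ assigns to the preceding prefix. A branch of $S_x$ gives a Hechler tree disjoint from $A$, so $S_x$ is wellfounded whenever a Laver tree lies inside $A$. By induction on the rank of $p$ in $S_x$, that rank is at least the least Laver rank of a $p$-reachable string outside $\dom(p)$. At the root this gives $\height(S_x)\ge\mathrm{rk}(\langle\rangle)$.
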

	
	We also have a detailed picture of the (arithmetical) Weihrauch degrees of the principles related to $\lls$. In Section \ref{sec:Wd} we introduce, in analogy with \cite{marconevalenti}, the relevant functions. There is a strong similarity between their degrees and those related to the open Ramsey theorem (see \cite{marconevalenti}), with a notable exception: the degree of $\mathsf{FindHS}_{\bSigma{0}{1}}$ in \cite{marconevalenti} turns out to be stronger than anything else, while our $\linds$ is at the bottom of the picture. Figure \ref{figure} presents our results and bears a striking resemblance with \cite[Figure 1]{marconevalenti}.
	
	\begin{figure}
		\begin{tikzpicture}[
			>=Stealth,
			box/.style={
				draw,
				rectangle,
				minimum width=2.2cm,
				minimum height=7mm,
				inner sep=2pt,
				fill=gray!25
			},
			strict/.style={->, line width=.3pt},
			weak/.style={->, dashed, line width=.3pt}
			]
			
			
			\draw (-6.4,-5.2) rectangle (6.4,-3.8);   
			\draw (-6.4,-3.8) rectangle (6.4,-1);   
			\draw (-6.4,-1) rectangle (6.4, 0.8);   
			\draw (-6.4, 0.8) rectangle (6.4, 3);   
			
			
			\node[box] (B) at (-3.8,2.3) {$\stCCantor \star \lls$};
			\node[box] (C) at (0,1.9) {$\CCantor \star \lls$};
			\node[box] (D) at (3.8, 1.5) {$\lls$};
			
			\node[box] (E) at (-3.8,0.1) {$\stCBaire$};
			\node[box] (F) at (0,-0.3) {$\tCBaire$};
			
			\node[box] (G) at (0,-1.9) {$\lindp \equiv_{\sW} \CBaire \equiv_{\W} \CCantor \star \wlindp$};
			
			\node[box] (H) at (0,-3.1) {$\wlindp$};
			
			\node[box] (I) at (0,-4.5) {$\UCBaire \equiv_{\sW} \wlindd \equiv_{\sW} \wlinds \equiv_{\sW} \lld \equiv_{\sW} \lindd \equiv_{\sW} \linds$};
			
			
			\draw[weak] (H) -- (G);
			
			\draw[weak] (D) -- (C);
			\draw[weak] (C) -- (B);
			
			
			\draw[strict] (I) -- (H);
			\draw[strict] (G) -- (F);
			\draw[strict] (F) -- (C);
			\draw[strict] (E) -- (B);
			
			\draw[strict] (F.west) -- (E.east);
			
			\draw[strict]
			(H.east)
			.. controls +(3.4,0) and +(0,-1.6) ..
			(D.south);
			
		\end{tikzpicture}
		\caption{This table summarizes the relative position of the (arithmetical) Weihrauch degrees of functions related to the open and clopen Laver partition theorem. The large rectangles correspond to arithmetical Weihrauch degrees. These are linearly ordered by $\leq^{\ari}_{\W}$ from the bottom to the top of the table. We draw a dashed arrow from $A$ to $B$ to indicate that $A \leq_{\W} B$ (but we do not know whether $B \leq_{\W}A$). A solid arrow from $A$ to $B$ indicates that $A <_{\W} B$.}\label{figure}
	\end{figure}

	\section{Preliminaries}
	
	We adopt the convention to use the symbol $\N$ only when referring to the first order part of the universe of a structure in the language of second order arithmetic. In particular, we always use $\N$ in the statements and proofs of our reverse mathematical results. Accordingly, we always use the symbol $\omega$ to denote the actual natural numbers. Now we fix some notation related to finite and infinite strings of natural numbers.
    
    We write $\omega^{<\omega}$ and $2^{<\omega}$ to denote the sets of, respectively, finite strings of natural numbers, and finite binary strings. The Baire space and Cantor space are denoted as $\baire$ and $\cantor$. Given two finite strings $\sigma$ and $\tau$, we denote their concatenation as $\concat{\sigma}{\tau}$. In case $\tau=\langle n \rangle$ for some natural number $n$, we write $\concat{\sigma}{n}$ instead of $\concat{\sigma}{\langle n\rangle}$. Given $\sigma \in \omega^{<\omega}$ and $\tau \in \omega^{<\omega} \cup \baire$, we write $\sigma \preceq \tau$ if $\sigma$ is a prefix of $\tau$. A set $A \subseteq \omega^{<\omega}$ is called \emph{prefix-free} if for every $\sigma, \tau \in A$, $\sigma \npreceq \tau$. A tree is a set $T \subseteq \omega^{<\omega}$ which is closed under prefixes, i.e.\ if $\sigma \in T$ and $\tau \preceq \sigma$, then $\tau \in T$. The \emph{body} of a tree $T$, denoted as $[T]$, is the set $\{f \in \baire : \forall n \,\, (f \upto n \in T)\}$. A tree $T$ is \emph{wellfounded} if $[T]=\emptyset$, otherwise it is \emph{illfounded}. We denote the \emph{height} (tree rank) of a wellfounded tree $T$ as $\height(T)$. We fix a (suitably effective) standard bijection $i \colon \omega \rightarrow \omega^{<\omega}$.
	For definiteness, we use the function $i$ explicitly in these preliminary definitions. Throughout the paper we will silently identify natural numbers with finite strings of natural numbers, as is customary in computability.
    
    We assume that the reader is familiar with the basic notions of computability theory and the type-2 theory of effectivity (i.e.\ computability on $\baire$ and $\cantor$, and the machinery needed to endow so-called \emph{represented spaces} with a notion of computability deriving from that on $\baire$). Some standard references for these theories are, respectively, \cite{rogers} and \cite{Brattka2021}. We also assume familiarity with the basic notions of topology and descriptive set theory on $\baire$ and $\cantor$. As mentioned in the introduction, we use some computational properties of the Galvin-Prikry theorem to derive properties of the Laver partition theorem, so we make reference to the \emph{Ramsey space} $\ramsey$ consisting of the infinite subsets of the natural numbers. We also make use of tools from hyperarithmetical theory and common results from reverse mathematics. The standard references for these areas are, respectively, \cite{sacks2017} and \cite{simpson}.
	
	Here we briefly recall some bits about the topics above, to set the stage and fix the notation we use throughout the paper.

    \subsection{Some subsystems of second order arithmetic}
	
	We recall the basics of reverse mathematics and introduce some theories which are relevant for this paper. Reverse mathematics is concerned with studying theories in the language of second order arithmetic, with the aim of pinning down the axioms necessary to prove theorems of ordinary mathematics.
	
	The standard base theory is $\rca$, consisting of the basic arithmetic axioms together with the axiom schema of $\bDelta{0}{1}$-comprehension and $\bSigma{0}{1}$-induction.
	Other theories featured in this paper are (in increasing order of strength) $\wkl$, consisting of $\rca$ together with the axiom stating that every infinite subtree of $2^{<\N}$ has a branch; $\aca$, consisting of $\rca$ together with the axiom schema of arithmetical comprehension; $\bDelta{1}{1}$-$\mathsf{CA}_0$, consisting of $\rca$ together with the axiom schema of $\bDelta{1}{1}$-comprehension; $\bSigma{1}{1}$-$\mathsf{AC}_0$, consisting of $\rca$ together with the axiom schema of $\bSigma{1}{1}$-choice, namely the axiom
	\[(\forall i \,\, \exists f \,\, \varphi(f,i)) \rightarrow (\exists f \,\, \forall i \,\, \varphi((f)_i, i))\]
	for all $\bSigma{1}{1}$ formulae $\varphi$; and lastly $\atr$, consisting of $\rca$ together with the schema of arithmetical transfinite recursion. We now recall some basic tools to work with these theories.
	
	\begin{lemma}[{\cite[Lemma III.1.3]{simpson}}]
		Over $\rca$, the theory $\aca$ is equivalent to the statement: ``for every injective function $f \colon \N \rightarrow \N$, there is a set $R$ such that $\forall n \,\, (n \in R \leftrightarrow \exists i \,\, f(i)=n)$''.
	\end{lemma}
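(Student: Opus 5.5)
The plan is the standard one from Simpson's book: show that the range-existence principle is equivalent to arithmetical comprehension over $\rca$.

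For the direction $\aca \Rightarrow$ range existence, fix an injective $f \colon \N \rightarrow \N$. The formula $\exists i \,\, f(i)=n$ is $\bSigma{0}{1}$ with parameter $f$, hence arithmetical. Arithmetical comprehension then gives the set $R=\{n : \exists i \,\, f(i)=n\}$ directly. Injectivity plays no role here.

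For the converse, assume range existence for injective functions. It is known that $\aca$ is equivalent over $\rca$ to $\bSigma{0}{1}$-comprehension, since arithmetical comprehension follows from $\bSigma{0}{1}$-comprehension by iterating Turing jumps inside $\rca$. So I will only derive $\bSigma{0}{1}$-comprehension.

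Take a $\bSigma{0}{1}$ formula, written in normal form as $\exists j \,\, \theta(n,j)$ with $\theta$ bounded, possibly with set parameters. I want an injective function whose range is exactly $X=\{n : \exists j \,\, \theta(n,j)\}$ (or an injective image of $X$).

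\begin{enumerate}
\item Define $g(\langle n,j \rangle) = 2n+1$ if $\theta(n,j)$ holds and no $j'<j$ satisfies $\theta(n,j')$. Otherwise set $g(\langle n,j\rangle)=2\langle n,j\rangle$.
\item Check that $g$ is $\bDelta{0}{1}$-definable (bounded quantifiers only), so it exists in $\rca$.
\item Check injectivity. Odd values come only from the least witness $j$ for each $n$, so at most one argument hits $2n+1$. Even values are injective because the pairing map is injective. Odd and even values never collide.
\item Let $R$ be the range of $g$. Then $n \in X \leftrightarrow 2n+1 \in R$, so $X$ exists by $\bDelta{0}{1}$-comprehension relative to $R$.
\end{enumerate}

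The only step needing care is the claim that $\bSigma{0}{1}$-comprehension yields full arithmetical comprehension. In $\rca$ this requires forming the iterated jumps $X^{(k)}$ for each standard $k$ and observing that each arithmetical formula is $\bDelta{0}{1}$ in a suitable jump. This is routine and can be cited from Simpson.
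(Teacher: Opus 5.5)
Your proposal is correct and is essentially the standard proof of Simpson's Lemma III.1.3, which the paper cites rather than reproves: the easy direction, and then the same injective function (least witness of $n$ sent to $2n+1$, every other argument sent to an even number) to obtain $\bSigma{0}{1}$-comprehension. The remaining step from $\bSigma{0}{1}$-comprehension to $\aca$ is indeed routine, whether you peel off the quantifiers of a prenex formula one at a time with $\bSigma{0}{1}$-comprehension or iterate the jump as you suggest.
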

	
	Over $\aca$, we have more manageable equivalent formulations of the theories $\bDelta{1}{1}$-$\mathsf{CA}_0$ and $\bSigma{1}{1}$-$\mathsf{AC}_0$. This is due to the following fact. 
	
	\begin{lemma}{\cite[Lemma IV.4.4]{simpson}}\label{lem:knf}
		$\aca$ proves Kleene's normal form theorem for $\bSigma{1}{1}$ predicates, i.e.\ the statement that there is a computable procedure mapping each $\bSigma{1}{1}$ formula $\varphi$ to a tree $T_{\varphi}$ such that $\varphi \leftrightarrow \exists f \,\, (f \in [T_{\varphi}])$.
	\end{lemma}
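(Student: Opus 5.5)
The plan is to reduce an arbitrary $\bSigma{1}{1}$ formula to the form $\exists f \, \forall n \, \theta(f\upto n, n)$ with $\theta$ bounded ($\lSigma{0}{0}$), and then to let $T_{\varphi}$ be the tree of finite strings that never violate $\theta$. The construction of $T_{\varphi}$ from $\varphi$ is purely syntactic, so it is computable, uniformly in the (number and set) parameters of $\varphi$. The one place where $\aca$ is really needed is to show that $\varphi$ implies $[T_{\varphi}]\neq\emptyset$.

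\textbf{Step 1: normal form.} Write $\varphi$ as $\exists X \, \psi(X)$ with $\psi$ arithmetical; several set quantifiers can be merged into one by pairing. Bring $\psi$ into prenex form $\forall n_1 \exists m_1 \forall n_2 \exists m_2 \cdots \, \theta_0$ with $\theta_0$ bounded, adding dummy quantifiers where necessary. Introduce Skolem functions $g_1, g_2, \dots$ for the existential number quantifiers: $g_k$ takes $(n_1,\dots,n_k)$ as arguments, so $m_k=g_k(n_1,\dots,n_k)$. Then code $X$ (as its characteristic function) together with all the $g_k$ into a single $f \in \N^{\N}$ by a fixed computable pairing. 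The resulting statement has the form $\exists f \, \forall \bar n \, \theta_1(f,\bar n)$, where $\theta_1$ queries $f$ only at finitely many places that are computably bounded in $\bar n$. Contract $\bar n$ to a single $n$, and replace $f$ by $f\upto b(n)$ for a suitable computable bound $b$. This gives $\theta(f\upto n,n)$ bounded, after reindexing so that the needed length is at most $n$, e.g.\ by interpreting $\theta(\sigma,n)$ as true whenever $|\sigma|$ is too short.

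\textbf{Step 2: the tree.} Set $T_{\varphi}=\{\sigma : \forall k \le |\sigma| \, \theta(\sigma\upto k', k) \text{ for all } k \text{ whose required length is at most } |\sigma|\}$. This is $\lDelta{0}{1}$ in the parameters, hence exists in $\rca$, and it is clearly closed under prefixes. By construction, $f \in [T_{\varphi}]$ if and only if $\forall n \, \theta(f\upto n,n)$.

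\textbf{Step 3: equivalence.} For the direction from $f \in [T_{\varphi}]$ to $\varphi$: decode $X$ and the $g_k$ from $f$ (this is $\lDelta{0}{1}$ in $f$). Then verify $\psi(X)$ by unwinding the prenex prefix: for each $n_1$, the value $g_1(n_1)$ witnesses $m_1$, and so on. This is just logic, since $\theta_0$ holds at all tuples.

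The converse is the main point and the only use of $\aca$. Given $X$ with $\psi(X)$, define each Skolem function by least witnesses. For instance, $g_1(n_1)$ is the least $m_1$ such that $\forall n_2 \exists m_2 \cdots \theta_0$ holds, and similarly for the later $g_k$. Each such definition is an arithmetical (in $X$) definition of a total function. Its totality follows from $\psi(X)$ together with the least number principle for arithmetical formulas, which is available in $\aca$. Since there are only finitely many $g_k$ (their number is fixed by the syntax of $\varphi$), arithmetical comprehension gives all of them, and hence $f$, as sets. Then $f \in [T_{\varphi}]$.

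The main obstacle is only bookkeeping in Step 1: ensuring that each $\theta(f\upto n,n)$ refers only to values of $f$ below $n$, so that $T_{\varphi}$ is genuinely a tree whose paths are exactly the Skolemized witnesses.
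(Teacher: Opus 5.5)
Your proposal is correct and is essentially the standard argument behind the result the paper cites from Simpson (the paper gives no proof of its own). That argument is: prenex form, Skolem functions defined by least witnesses via arithmetical comprehension (the only use of $\aca$), and the tree of strings that have not yet refuted the bounded matrix.

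There is one small slip in Step 1. The positions of $f$ needed to evaluate the matrix at $\bar n$ are in general not computably bounded in $\bar n$ alone. This is because $\theta_0$ may query $X$ at terms involving the values $g_k(n_1,\dots,n_k)$, which are themselves entries of $f$ (e.g.\ $\psi(X)\equiv\forall n\,\exists m\,(m>n\land m\in X)$). There are two ways to repair this. One is your own device of declaring $\theta(\sigma,n)$ true when $\sigma$ is too short, with ``too short'' decided by reading the needed values off $\sigma$. The other is to also quantify the $m_k$ universally under the guard $m_k=g_k(n_1,\dots,n_k)$.
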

	
	We stress that in the lemma above the map $\varphi \mapsto T_{\varphi}$ is not only arithmetically definable but computable. The strength of $\aca$ is only needed to show the equivalence between the formulae $\varphi$ and $\exists f \,\, (f \in [T_{\varphi}])$.
	 
	The next two characterizations are obtained by a straightforward application of Kleene's normal form theorem.
	
	\begin{proposition}\label{prop:doowithtrees}
		Over $\aca$, $\bDelta{1}{1}$-$\mathsf{CA}_0$ is equivalent to the statement ``for every double sequence of trees $(T^0_i, T^1_i)_{i \in \N}$ such that for every $i$ exactly one of $T^0_i$ and $T^1_i$ is wellfounded, there is a set $Z$ such that $i \in Z \leftrightarrow [T^0_i] \neq \emptyset$''.
	\end{proposition}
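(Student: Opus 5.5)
Both directions rest on Kleene's normal form theorem (Lemma \ref{lem:knf}), which is available because we work over $\aca$. Note that $\bDelta{1}{1}$-comprehension says: whenever $\forall i\,(\varphi(i) \leftrightarrow \psi(i))$ with $\varphi$ a $\bSigma{1}{1}$ formula and $\psi$ a $\bPi{1}{1}$ formula (parameters allowed), the set $\{i : \varphi(i)\}$ exists.

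For the forward direction, assume $\bDelta{1}{1}$-$\mathsf{CA}_0$ and let $(T^0_i,T^1_i)_{i\in\N}$ be a double sequence in which exactly one tree of each pair is wellfounded.
\begin{itemize}
\item The formula $[T^0_i]\neq\emptyset$, i.e.\ $\exists f\,\forall n\,(f\upto n\in T^0_i)$, is $\bSigma{1}{1}$.
\item By the hypothesis on the pairs, it is equivalent for every $i$ to $[T^1_i]=\emptyset$, which is $\bPi{1}{1}$.
\item $\bDelta{1}{1}$-comprehension applied to this pair of formulas yields the required set $Z$.
\end{itemize}

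For the reverse direction, assume $\aca$ together with the tree statement. Let $\varphi(i)$ be $\bSigma{1}{1}$ and $\psi(i)$ be $\bPi{1}{1}$, with $\forall i\,(\varphi(i)\leftrightarrow\psi(i))$.
\begin{itemize}
\item Apply Lemma \ref{lem:knf} uniformly in $i$ and in the parameters. This gives a sequence of trees $T^0_i$ with $\varphi(i)\leftrightarrow[T^0_i]\neq\emptyset$.
\item Apply it likewise to the $\bSigma{1}{1}$ formula $\neg\psi(i)$. This gives trees $T^1_i$ with $\neg\psi(i)\leftrightarrow[T^1_i]\neq\emptyset$.
\item Since the map from formulas to trees is computable (the point stressed after Lemma \ref{lem:knf}), both sequences exist by $\bDelta{0}{1}$-comprehension relative to the parameters. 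The uniform dependence on $i$ is handled by treating $i$ as a number parameter in the normal form.
\end{itemize}
Next, for each $i$, exactly one of $\varphi(i)$ and $\neg\psi(i)$ holds, because $\varphi(i)\leftrightarrow\psi(i)$. Hence exactly one of $T^0_i,T^1_i$ is illfounded, that is, exactly one is wellfounded. The tree statement now gives $Z$ with $i\in Z\leftrightarrow[T^0_i]\neq\emptyset\leftrightarrow\varphi(i)$, so $Z=\{i:\varphi(i)\}$, as required.

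The only point needing care is that Kleene's normal form must be applied uniformly in $i$ and in the second-order parameters, so that the trees form a single sequence definable in $\rca$. The equivalences with the formulas are then proved in $\aca$, exactly as Lemma \ref{lem:knf} provides; no other obstacle is expected.
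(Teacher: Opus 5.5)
Your proof is correct and is essentially the paper's own argument: the paper justifies this proposition only as ``a straightforward application of Kleene's normal form theorem,'' which is what you carry out, including the uniform dependence on $i$ and on the parameters. As you implicitly note, only the reverse direction actually needs the normal form; in the forward direction $[T^0_i]\neq\emptyset$ and $[T^1_i]=\emptyset$ are already literally $\bSigma{1}{1}$ and $\bPi{1}{1}$.
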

	
	\begin{proposition}\label{prop:soowithtrees}
		Over $\aca$, $\bSigma{1}{1}$-$\mathsf{AC}_0$ is equivalent to the statement ``for every sequence of illfounded trees $(T_i)_{i \in \N}$ there is a sequence $(g_i)_{i \in \N}$ such that $g_i \in [T_i]$ for every $i$''.
	\end{proposition}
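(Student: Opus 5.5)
We prove the two directions of the equivalence, working in $\aca$ throughout. Lemma \ref{lem:knf} gives, for each $\bSigma{1}{1}$ formula $\varphi(f,i)$, a tree $T_{\varphi,i}$ computed uniformly from $i$ (and from the parameters) such that $\exists f\,\varphi(f,i) \leftrightarrow [T_{\varphi,i}]\neq\emptyset$. It will be convenient to apply this to an enriched formula $\psi(g,i)$ asserting that $g=\langle f,h\rangle$ with $\varphi(f,i)$ and $h$ a witness to the existential quantifier of $\varphi$. Since $\varphi$ is $\bSigma{1}{1}$, it has the form $\exists h\,\theta(f,h,i)$ with $\theta$ arithmetical, so $\psi$ is again $\bSigma{1}{1}$. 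The normal form tree $T_i$ for $\psi$ is then built so that from any path $p\in[T_i]$ we can arithmetically (indeed computably) read off some $f$ with $\varphi(f,i)$. Concretely, the standard construction makes a path of $T_{\varphi}$ code a pair $(f,h)$ together with Skolem functions for $\theta$, and the projection to the $f$-component is computable.

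For the forward direction, assume $\bSigma{1}{1}$-$\mathsf{AC}_0$ and let $(T_i)_{i\in\N}$ be a sequence of illfounded trees. The formula $\varphi(g,i)\equiv g\in[T_i]$ is $\bPi{0}{1}$ in $g$ and the parameter, hence $\bSigma{1}{1}$. The hypothesis gives $\forall i\,\exists g\,\varphi(g,i)$, so choice yields $f$ with $(f)_i\in[T_i]$ for every $i$. Setting $g_i=(f)_i$ gives the required sequence. This direction does not even need $\aca$.

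For the converse, assume the tree statement and suppose $\forall i\,\exists f\,\varphi(f,i)$ with $\varphi$ $\bSigma{1}{1}$. By Lemma \ref{lem:knf} (applied to the enriched formula as above) the sequence $(T_i)_{i\in\N}$ exists by $\bDelta{0}{1}$-comprehension, being uniformly computable in the parameters. Each $T_i$ is illfounded because $\exists f\,\varphi(f,i)$ holds. The principle gives paths $(g_i)_{i\in\N}$ with $g_i\in[T_i]$. Define $f$ by $(f)_i=\pi(g_i)$, where $\pi$ is the computable projection to the first component; this exists in $\rca$. The equivalence in Lemma \ref{lem:knf}, applied now to the specific path, shows $\varphi((f)_i,i)$ for every $i$.

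The main obstacle is this last step. Lemma \ref{lem:knf} as stated only asserts the equivalence of $\varphi$ with $\exists f\,(f\in[T_\varphi])$, and does not say that a path of the tree yields a witness for $\varphi$ in the original variable $f$. We need a uniform, parametrized version in which the free set variable $f$ is kept as part of the path and recovered from it. I would handle this by inspecting the proof of Lemma \ref{lem:knf} (Simpson IV.4.4 and the normal form lemma before it). There the tree is built from the matrix of $\varphi$ by coding $f$ together with the existentially quantified function, with arithmetical Skolem functions absorbed into the path. This construction provably satisfies, in $\aca$, that $p\in[T]$ implies $\varphi(\pi(p))$, which is exactly what the converse direction uses.
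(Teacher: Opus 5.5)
Your proof is correct and follows the paper's approach, which the paper only indicates as a ``straightforward application of Kleene's normal form theorem.'' In the forward direction you apply $\bSigma{1}{1}$-choice to the $\bPi{0}{1}$ formula $g \in [T_i]$. In the converse you correctly observe that you need the parametrized normal form, with the free function variable kept as a computable component of each path (as in Simpson's construction), rather than just the equivalence $\exists f\,\varphi \leftrightarrow [T_\varphi] \neq \emptyset$.
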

	
	The theory $\atr$ has a similar formulation in terms of sequences of trees, also known as $\bSigma{1}{1}$-separation.
	
	\begin{proposition}\label{prop:atrwithtrees}
		Over $\aca$, $\atr$ is equivalent to the statement ``for every double sequence of trees $(T^0_i, T^1_i)_{i \in \N}$ such that for every $i$ at most one of $T^0_i$ and $T^1_i$ is wellfounded, there is a set $Z$ such that $\forall i \,\, (i \in Z \leftrightarrow [T^0_i] \neq \emptyset)$''.
	\end{proposition}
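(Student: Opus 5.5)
The plan is to reduce the statement to the standard characterization of $\atr$ as $\bSigma{1}{1}$-separation (\cite[Lemma V.5.1]{simpson}): over $\rca$, $\atr$ is equivalent to the assertion that for any two $\bSigma{1}{1}$ formulas $\varphi_0(i),\varphi_1(i)$ with $\neg\exists i\,(\varphi_0(i)\wedge\varphi_1(i))$ there is a set $Z$ with $\varphi_0(i)\rightarrow i\in Z$ and $\varphi_1(i)\rightarrow i\notin Z$. Lemma \ref{lem:knf} converts formulas into trees in both directions, exactly as for Propositions \ref{prop:doowithtrees} and \ref{prop:soowithtrees}.

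Before the two directions, a caveat about which hypothesis the argument can use. Read literally, ``at most one of $T^0_i$, $T^1_i$ is wellfounded'' together with the biconditional $i\in Z\leftrightarrow[T^0_i]\neq\emptyset$ is too strong. Taking every $T^1_i=\N^{<\N}$ makes the hypothesis vacuous. The conclusion then becomes full $\bSigma{1}{1}$ comprehension, i.e.\ $\bPi{1}{1}$-$\mathsf{CA}_0$, which $\atr$ does not prove. So the hypothesis the argument will actually use is the separation-type one: the two $\bSigma{1}{1}$ conditions $[T^0_i]\neq\emptyset$ and $[T^1_i]\neq\emptyset$ never hold simultaneously. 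Under that hypothesis the conclusion is taken to say that $Z$ contains every $i$ with $[T^0_i]\neq\emptyset$ and no $i$ with $[T^1_i]\neq\emptyset$. This is the form that matches ``also known as $\bSigma{1}{1}$-separation'', and it coincides with the biconditional exactly at the indices where one of the two trees is illfounded.

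For the forward direction I work in $\atr$ and set $\varphi_k(i):\equiv\exists f\,(f\in[T^k_i])$ for $k=0,1$. These are $\bSigma{1}{1}$ formulas which, by hypothesis, are never satisfied at the same $i$. Applying $\bSigma{1}{1}$-separation gives the desired $Z$, with no further coding needed.

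For the reverse direction I assume $\aca$ together with the tree principle and take disjoint $\bSigma{1}{1}$ formulas $\varphi_0,\varphi_1$.

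1. Lemma \ref{lem:knf} yields computable sequences of trees $T^0_i=T_{\varphi_0(i)}$ and $T^1_i=T_{\varphi_1(i)}$ with $\varphi_k(i)\leftrightarrow[T^k_i]\neq\emptyset$.
2. Disjointness of $\varphi_0$ and $\varphi_1$ transfers to the trees: $[T^0_i]$ and $[T^1_i]$ are never both nonempty.
3. The double sequence therefore satisfies the principle's hypothesis, and the resulting $Z$ separates $\varphi_0$ from $\varphi_1$.
4. By \cite[Lemma V.5.1]{simpson} this gives $\atr$.

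The only delicate step is the uniformity in step 1. The parameters of $\varphi_k$ must be absorbed into a single sequence of trees definable by $\bDelta{0}{1}$ comprehension, so that the double sequence actually exists in the model. This is guaranteed by the remark after Lemma \ref{lem:knf} that $\varphi\mapsto T_\varphi$ is computable. The strength of $\aca$ is used only to verify the equivalence $\varphi_k(i)\leftrightarrow[T^k_i]\neq\emptyset$.
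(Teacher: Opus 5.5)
Your proposal is correct and takes the same route as the paper, which cites Simpson's $\bSigma{1}{1}$-separation characterization of $\atr$ and transfers it to trees via Kleene's normal form, as in your two directions. Your caveat is also right: read literally (``at most one wellfounded'' with the biconditional), the principle gives $\bSigma{1}{1}$-comprehension and hence $\bPi{1}{1}$-$\mathsf{CA}_0$, and your separation form (at most one illfounded, $Z$ containing every $i$ with $[T^0_i]\neq\emptyset$ and no $i$ with $[T^1_i]\neq\emptyset$) is exactly the version the paper uses in the proof of Lemma \ref{lem:hldatr}.
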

	\begin{proof}
		See \cite[Theorem V.5.1]{simpson} for the version of this result with $\bSigma{1}{1}$ formulae instead of trees. The version with trees follows from Kleene's normal form theorem.
	\end{proof}
	We briefly go over induction schema. Given a class of formulas $\Gamma$, the schema $\Gamma$-$\mathsf{IND}$ (or $\mathsf{I}\Gamma$) is the set of axioms
	\[\varphi(0) \land \forall n \, (\varphi(n) \rightarrow \varphi(n+1)) \rightarrow \forall n \, \varphi(n)\]
	where $\varphi$ ranges over the formulas in $\Gamma$.
	
	It is well known that $\Gamma$-comprehension implies $\mathsf{I}\Gamma$, which in turn is equivalent to bounded $\Gamma$-comprehension, namely the fact that, if $\varphi \in \Gamma$ and $n \in \N$, the set $\{m < n : \varphi(m)\}$ exists. An axiom schema weaker than $\mathsf{I}\Gamma$ is the schema of \emph{bounding} for $\Gamma$-formulas $\mathsf{B}\Gamma$, which is the set of axioms
    \[
    \forall m \, (\forall i<m \, \exists n \, \varphi (i,n) \rightarrow \exists b \, \forall i<m \, \exists n<b \, \varphi (i,n))
    \]
    where $\varphi$ ranges over the formulas in $\Gamma$.
    We mention the following:
	
	\begin{proposition}\cite[Proposition 6.5.4]{DM}\label{prop:fuf}
		Over $\rca$, the following are equivalent:
		\begin{itemize}
			\item $\mathsf{B}\bSigma{0}{2}$,
			\item the statement ``the union of finitely many finite sets is finite''.
		\end{itemize}
	\end{proposition}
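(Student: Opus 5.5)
Over $\rca$ we show both directions of the equivalence between $\mathsf{B}\bSigma{0}{2}$ and ``the union of finitely many finite sets is finite''. The plan is to read the combinatorial statement as bounding applied to the formula expressing ``the $i$-th set is bounded''. A finite set is one that is bounded, i.e.\ $\exists b\,\forall x\,(x\in X_i\rightarrow x<b)$, which is a $\Pi^0_1$ property of $b$ relative to the sequence. We take the statement to concern finite sequences $(X_i)_{i<m}$ given as a single set $X=\{\langle i,x\rangle : x\in X_i\}$.

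\emph{From $\mathsf{B}\bSigma{0}{2}$ to the union statement.} Suppose each $X_i$, $i<m$, is finite. Then $\forall i<m\,\exists b\,\forall x\,(\langle i,x\rangle\in X\rightarrow x<b)$ holds, and the matrix is $\Pi^0_1$, hence $\Sigma^0_2$. Applying $\mathsf{B}\bSigma{0}{2}$ (which is equivalent to $\mathsf{B}\Pi^0_1$ over $\rca$) gives a single $c$ with $\forall i<m\,\exists b<c\,\forall x\,(\ldots\rightarrow x<b)$. Hence every element of $\bigcup_{i<m}X_i$ is below $c$, so the union, which exists by $\Delta^0_1$ comprehension, is finite.

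\emph{From the union statement to $\mathsf{B}\bSigma{0}{2}$.} It suffices to derive $\mathsf{B}\Pi^0_1$, since $\mathsf{B}\Sigma^0_2$ and $\mathsf{B}\Pi^0_1$ are equivalent over $\rca$ by absorbing the leading existential quantifier into the bounded witness via pairing. So assume $\forall i<m\,\exists n\,\forall k\,\theta(i,n,k)$ with $\theta$ $\Delta^0_0$ (parameters allowed). The first step is to define the $\Delta^0_1$ set
\[X_i=\{s : \forall n<s\,\exists k<s\,\neg\theta(i,n,k)\}.\]
Let $n_i$ be a true witness for $i$. Then $n_i$ never gets refuted, so any $s\in X_i$ must satisfy $s\le n_i$, and $X_i$ is finite. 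The sequence $X=\{\langle i,s\rangle: i<m, s\in X_i\}$ exists by $\Delta^0_1$ comprehension. By the hypothesis the union is finite, say bounded by $c$.

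The next step is to show $\forall i<m\,\exists n<c\,\forall k\,\theta(i,n,k)$. Suppose instead that for some $i$ every $n<c$ is refuted at some $k$. Then by $\Sigma^0_1$ bounding, which is available in $\rca$ since $\mathsf{B}\Sigma^0_1$ follows from $\mathsf I\Sigma^0_1$, there is a single bound $s_0$ on these refutations. One then checks that any $s\ge\max(c,s_0)$ belongs to $X_i$, contradicting $s<c$ for all $s$ in the union. This last verification is the main obstacle, because a refutation of some $n<c$ need not show that $n'\in[c,s)$ are refuted below $s$. To handle it, modify $X_i$ to record the least unrefuted candidate: let $s\in X_i$ iff $s=\mu n\le s\,[\forall k<s\,\theta(i,n,k)]$ interpreted suitably, i.e.\ $X_i=\{n : \exists s\,(n$ is the least candidate unrefuted below stage $s)\}$. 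This set is $\Sigma^0_1$, so we instead take the $\Delta^0_1$ version $\{\langle n,s\rangle : n \text{ least unrefuted below } s\}$ projected onto $n$ by enumeration. Its elements are all $\le n_i$, so it is finite. The union bound $c$ then guarantees that the eventual least true witness, which is a limit of these candidates, is $<c$. If the union bound is formulated for ranges of functions rather than sets, we use the enumerating function directly; otherwise we take the set of pairs $\langle n,s\rangle$ with $s$ the first stage at which $n$ becomes the least candidate. This set is finite because there are at most $n_i+1$ such changes, each occurring at a definite stage.
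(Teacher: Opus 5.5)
The paper gives no proof of this proposition; it quotes it from Dzhafarov--Mummert. Your argument therefore has to be judged on its own, and in its final form it is correct.

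\begin{itemize}
\item The forward direction is fine. You do not need the detour through $\mathsf{B}\Pi^0_1$, since a $\Pi^0_1$ matrix is already $\Sigma^0_2$.
\item The reduction of $\mathsf{B}\Sigma^0_2$ to $\mathsf{B}\Pi^0_1$ by pairing is standard.
\item The sets you end up with, $Y_i=\{\langle n,s\rangle : s \text{ is the first stage at which } n \text{ is the least candidate not refuted below } s\}$, form a sequence that is $\Delta^0_0$ uniformly in $i<m$, so it exists.
\item Each $Y_i$ is finite. The map $\langle n,s\rangle\mapsto n$ is injective on $Y_i$ with values $\le n_i$, so $Y_i$ has no coded subset of size $n_i+2$, and such a set is bounded by $\Sigma^0_1$-induction.
\end{itemize}

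The write-up of the reverse direction should be pruned, though.
\begin{itemize}
\item Your first set $\{s:\forall n<s\,\exists k<s\,\neg\theta(i,n,k)\}$ fails, as you observe.
\item The intermediate versions fail too. The projection $\{n:\exists s\,(\dots)\}$ is $\Sigma^0_1$. Bounded $\Sigma^0_1$-comprehension would produce the sequence $\langle X_i : i<m\rangle$ only from a bound uniform in $i$, which is exactly what you are trying to prove.
\item The $\Delta^0_1$ set of all pairs $\langle n,s\rangle$ with $n$ the least candidate at stage $s$ is infinite. So ``its elements are all $\le n_i$, so it is finite'' is false for that set.
\end{itemize}

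Present only the first-stage version, and justify the last step, which you currently dispatch with ``a limit of these candidates''. The least true witness $n_i^*$ exists by the $\Pi^0_1$ least number principle. It is the least candidate at some stage, because by $\mathsf{B}\Sigma^0_1$ all $n<n_i^*$ are refuted below a common stage. Hence $\langle n_i^*,s\rangle\in Y_i$ for some $s$, and $n_i^*\le\langle n_i^*,s\rangle<c$.

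A slightly cleaner variant is available.
\begin{itemize}
\item Put $g(i,s)=\mu n\,[\forall k<s\,\theta(i,n,k)]$. This is total, nondecreasing in $s$, and bounded by $n_i$.
\item Let $Y_i=\{s : g(i,s+1)\neq g(i,s)\}$. It has at most $n_i$ elements, since each change raises $g(i,\cdot)$.
\item If $c$ bounds $\bigcup_{i<m}Y_i$, then $g(i,\cdot)$ is constant from $c$ on. So $g(i,c)$ is never refuted, and $1+\max_{i<m}g(i,c)$ is the required bound.
\end{itemize}
This version needs neither $\mathsf{B}\Sigma^0_1$ nor the $\Pi^0_1$ least number principle.
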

	
	We conclude the section introducing \emph{finitary $\bSigma{1}{1}$-$\mathsf{AC}_0$},\footnote{Beware that in \cite{Goh_finite_choice} Goh studied a principle called finite $\bSigma{1}{1}$-$\mathsf{AC}_0$, which is quite different from ours.} namely the schema 
	\[\forall i \,\, \exists f \,\, \varphi(i,f) \rightarrow \forall n \,\, \exists g\,\, \forall i \leq n \,\, \varphi(i, g_i)\]
	where $\varphi$ is any $\bSigma{1}{1}$ formula.
	Note that if $T=(T_i)_{i \in \N}$ is a sequence of illfounded trees, $\rca$ + finitary $\bSigma{1}{1}$-$\mathsf{AC}_0$ proves that for all $n \in \N$, there is a sequence $(g_i)_{i \leq n}$ with $g_i \in [T_i]$ for every $i \leq n$. 
	
	\begin{lemma}\label{lem:finitechoice}
		Over $\rca$, both $\bSigma{1}{1}$-$\mathsf{AC}_0$ and $\bSigma{1}{1}\text{-}\mathsf{IND}$ imply finitary $\bSigma{1}{1}$-$\mathsf{AC}_0$.
	\end{lemma}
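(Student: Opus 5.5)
The two implications need different arguments.

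For $\bSigma{1}{1}$-$\mathsf{AC}_0$ the implication is immediate. Assume $\forall i \,\, \exists f \,\, \varphi(i,f)$ with $\varphi$ a $\bSigma{1}{1}$ formula. By $\bSigma{1}{1}$-$\mathsf{AC}_0$ there is a single $g$ with $\varphi(i,(g)_i)$ for every $i$. Given $n$, the sequence $(g_i)_{i \leq n}$ with $g_i=(g)_i$ exists by $\bDelta{0}{1}$-comprehension (or simply take $g$ itself, coding a sequence indexed by all $i$). It witnesses $\forall i \leq n \,\, \varphi(i,g_i)$. Nothing beyond $\rca$ is needed for this step.

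For $\bSigma{1}{1}$-$\mathsf{IND}$ I will argue by induction on $n$. Fix $\varphi$ with $\forall i \,\, \exists f \,\, \varphi(i,f)$, and let $\psi(n)$ be the formula $\exists g \,\, \forall i \leq n \,\, \varphi(i,(g)_i)$. The first step is to check that $\psi$ is (equivalent over $\rca$ to) a $\bSigma{1}{1}$ formula. Write $\varphi(i,f)$ as $\exists h \,\, \theta(i,f,h)$ with $\theta$ arithmetical. Then $\psi(n)$ is $\exists g \,\, \forall i \leq n \,\, \exists h \,\, \theta(i,(g)_i,h)$, and the problem is the bounded number quantifier in front of the set quantifier. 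To remove it, I build the witnesses $h$ into $g$: let $\psi'(n)$ be $\exists g \,\, \forall i \leq n \,\, \theta(i,((g)_i)_0,((g)_i)_1)$. This is $\bSigma{1}{1}$, and it clearly implies the conclusion for $n$. So I will run the induction directly on $\psi'$, which avoids having to prove a general collection principle.

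The base case $\psi'(0)$ holds: from $\exists f \,\, \exists h \,\, \theta(0,f,h)$, pair $f$ and $h$ into $(g)_0$ and let the other columns be arbitrary. For the induction step, suppose $g$ witnesses $\psi'(n)$. Apply the hypothesis to $i=n+1$ to get $f,h$ with $\theta(n+1,f,h)$. Define $g'$ by $(g')_i=(g)_i$ for $i\leq n$ and $(g')_{n+1}=\langle f,h\rangle$; this is a $\bDelta{0}{1}$ definition in the parameters $g,f,h$, so $g'$ exists in $\rca$. Then $g'$ witnesses $\psi'(n+1)$. By $\bSigma{1}{1}$-$\mathsf{IND}$ applied to $\psi'$, we get $\forall n \,\, \psi'(n)$, and hence the finitary choice conclusion.

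The only real obstacle is keeping the induction formula in the class $\bSigma{1}{1}$. If the induction were run on the natural formula with $\varphi$ left unexpanded, the bounded quantifier would sit in front of an existential set quantifier, and pulling it out would need a bounding-style argument. Absorbing the inner witnesses into $g$, as done with $\psi'$, settles this. One should also remark that parameters are allowed in the induction scheme, and that the pairing and column notation is handled in $\rca$ as usual.
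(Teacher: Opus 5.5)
Your proof is correct and follows the paper's route. The $\bSigma{1}{1}$-$\mathsf{AC}_0$ case is immediate, and the $\bSigma{1}{1}$-$\mathsf{IND}$ case is an induction on $\exists g \,\forall i \leq n\, \varphi(i,g_i)$; absorbing the inner witnesses $h$ into $g$, so the induction formula is literally $\bSigma{1}{1}$ and has no bounded number quantifier in front of the set quantifier of $\varphi$, makes explicit a detail the paper's one-line proof leaves implicit.
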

	\begin{proof}
		The fact that $\bSigma{1}{1}$-$\mathsf{AC}_0$ implies finitary $\bSigma{1}{1}$-$\mathsf{AC}_0$ is obvious. 
		
		To see that $\bSigma{1}{1}\text{-}\mathsf{IND}$ also implies finitary $\bSigma{1}{1}$-$\mathsf{AC}_0$, it suffices to apply $\bSigma{1}{1}\text{-}\mathsf{IND}$ to the formula $\psi(n)$ defined by $\exists g \,\, \forall i \leq n \,\,  \varphi(g_i)$.
	\end{proof}
	Since $\bSigma{1}{1}$-$\mathsf{AC}_0$ and $\bSigma{1}{1}\text{-}\mathsf{IND}$ do not imply each other, our new principle cannot be equivalent to either. In Section \ref{sec:rm}, we will obtain some partial reversals of the principles of our interest over the base theory $\rca$ + finitary $\bSigma{1}{1}$-$\mathsf{AC}_0$.
    
	\subsection{Preliminaries on Weihrauch degrees}
	
	\subsubsection{Basic definitions}
	Weihrauch reducibility is a concept which allows us to classify relations in terms of their computational strength.
	
	\begin{definition}
		A \emph{representation} of a set $X$ is a surjective, possibly partial, function $\delta \subc \baire \rightarrow X$. The pair $(X,\delta)$ is called a represented space.
	\end{definition}
	
	Represented spaces inherit a notion of computability from the usual one on $\baire$. This notion is then used to define the preorder $\lew$ of Weihrauch reducibility on relations defined on represented spaces. A relation $R \subseteq X \times Y$, where $X$ and $Y$ are represented spaces, is usually thought of as a (possibly partial) multi-valued function. The common notation, accordingly, is of the form $r \colon X \multif Y$ (resp.\ $r \subc X \multif Y$). If $r$ is as above, and $x \in \dom(r)$, then $r(x)=\{y \in Y : R(x,y)\}$. We often refer to partial multi-valued functions as just ``functions'' or ``problems'', when our intended meaning is clear from context.
	
	\begin{definition}
		Let $f \subc (X, \delta_X) \multif (Y, \delta_Y)$ be a partial multi-valued function between represented spaces. A \emph{realizer} for $f$ (in symbols $F \vdash f$) is a function $F \subc \baire \rightarrow \baire$ such that
		\begin{itemize}
			\item $\dom(f \circ \delta_X) \subseteq \dom(F)$, and 
			\item for every $x \in \dom(f \circ \delta_X)$, $\delta_Y(F(x)) \in f(x)$.
		\end{itemize}
	\end{definition}
	
	We can now give the definition of Weihrauch reduction.
	
	\begin{definition}
		Let $f \subc (X, \delta_X) \multif (Y, \delta_Y)$ and $g \subc (A, \delta_A) \multif (B, \delta_B)$ be partial multi-valued functions between represented spaces. We say that $f$ is Weihrauch reducible to $g$, in symbols $f \lew g$ if there are computable functions on Baire space $\varphi$ and $\psi$ such that, for every $G \vdash g$, the function $p \mapsto \psi(\langle p, G(\varphi(p))\rangle)$ is a realizer for $f$. The reduction is \emph{strong}, in symbols $f \lews g$, if $p \mapsto \psi( G(\varphi(p)))$ is a realizer for $f$, i.e.\ if the postprocessing $\psi$ does not need the original input. If $\varphi$ and $\psi$ are not computable but merely arithmetical, then we say that $f$ arithmetically Weihrauch reduces to $g$, in symbols $f \leq^{\ari}_{\W} g$.
	\end{definition}
	
	The structure of Weihrauch degrees consists of the quotient of the preorder $\lew$ defined above under the relation of bi-reducibility. Similarly one obtains the structure of the strong and arithmetical Weihrauch degrees.
	
	Note that if $f \subc X \multif Y$ is any partial function, then $f \equiv_{\sW} \bigcup_{F \vdash f}F$, hence each (strong, arithmetical) Weihrauch degree has a representative which is a relation on Baire space. We implicitly use this throughout the paper, where we work with the codes of the objects of our interest.
	
	\begin{remark}
		Any theorem of the form $\forall x \in A \,\, ( \psi(x) \rightarrow \exists y \in B\,\, \varphi(x,y))$, where $A$ and $B$ are represented spaces, can be viewed as the partial multi-valued function $t \subc A \multif B$ with $\dom(t) = \{a \in A : \psi(a)\}$ and $t(a)=\{b \in B : \varphi(a,b)\}$. Studying the Weihrauch degree of $t$ gives information on the computational content of the original theorem. This approach to the study of the logical strength of theorems, which started with \cite{GherardiMarcone}, complements that of reverse mathematics.
	\end{remark}
	
	\subsubsection{Common spaces and their representations}\label{sub:coding}
	
	\begin{definition}
		Let $A \subseteq \omega^{<\omega}$. We say that $A$ \emph{codes the open set} $\open{A}=\{f \in \baire : \exists \tau \prec f \,\, (\tau \in A)\}$.
		The function $\open{\cdot} \colon \baire \rightarrow \bSigma{0}{1}(\baire)$ defined as $f \mapsto \open{\{i(k) : \exists n \in \omega\, k+1 = f(n)\}}$ is a representation of the set of open subset of $\baire$. 
	\end{definition}
	
	We point out the abuse of notation: we use symbols $\open{A}$ and $\open{f}$ to denote the open sets coded by, respectively, a set of strings $A$, and a function $f \in \baire$. The simple idea is that, for any $A$, $\open{A}=\open{f_A}$ where $f$ enumerates the (successors of codes for) the strings in $A$. We trust that this creates no confusion.
	
	We also point out that, without loss of generality, we can assume that our codes for open sets are prefix-free: there is a uniformly computable procedure which, given any $A \subseteq \omega^{<\omega}$, outputs a set $\pf{A}$ which is prefix-free and has the property that any string $\sigma \in A$ has some prefix $\tau \preceq \sigma$ such that $\tau \in \pf{A}$.
	
	From the representation $\open{\cdot}$ we derive representations for closed and clopen subsets of $\baire$.
	
	\begin{definition}
        We define a representation of closed sets $\delta_{\bPi{0}{1}} \colon \baire \rightarrow \bPi{0}{1}(\baire)$ by mapping $f \in \baire$ to $\baire \setminus \open{f}$. 		
		We also define a representation of clopen sets as follows: a code for a $\bDelta{0}{1}$ set $C$ is a pair $(f,g)$ with $C=\open{f}=\baire \setminus \open{g}$.
	\end{definition}
	
	\begin{remark}\label{rem:clopencodes}
        We note that there is at least one more useful representation of closed sets. Given any $A \subseteq \omega^{<\omega}$, we define a tree $T_A$ as $T_A=\{\sigma \in \omega^{<\omega} : \nexists \tau \,\, (\tau \in \pf{A} \land \tau \preceq \sigma)\}$.
		It is easy to see that $[T_A]= \baire \setminus \open{A}$. 
        In the other direction, given a tree $T$, we notice that $[T]= \baire \setminus \open{\omega^{<\omega} \setminus T}$. This justifies thinking about codes for closed sets as trees.
        
		Moreover, if $C$ is a clopen subset of $\baire$ and $A \subseteq \omega^{<\omega}$ is a code for $C$ as an open set, then it is easy to see that closing $\pf{A}$ under prefixes yields a wellfounded tree. 
		Indeed, this is a characterization: an open set is clopen iff any open code for it satisfies the above property.
	\end{remark}
	
	\begin{definition}
		The \emph{Ramsey space} $\ramsey$ is the set of infinite subsets of $\omega$. Given an infinite $A \subseteq \omega$, we think of $A$ in terms of its \emph{principal function} $p_A$, i.e.\ its strictly increasing enumeration. With this identification, we get $\ramsey \subseteq \baire$. 
	\end{definition}
	
	\begin{lemma}[Folklore]\label{lem:ramseyisbaire}
		There is a computable homeomorphism between $\ramsey$ (endowed with the subspace topology of $\baire$) and $\baire$.
	\end{lemma}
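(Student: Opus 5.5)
The natural candidate is the map that sends a function to the set of its partial sums. Define $\Phi \colon \baire \to \ramsey$ by
\[
\Phi(f) = \Bigl\{\, \textstyle\sum_{j \le n} (f(j)+1) - 1 : n \in \omega \,\Bigr\},
\]
so that the principal function of $\Phi(f)$ is $p(n) = \sum_{j\le n}(f(j)+1) - 1$. Since $f(j)+1 \ge 1$, $p$ is strictly increasing, so $\Phi(f)$ is indeed an infinite subset of $\omega$. Conversely, define $\Psi \colon \ramsey \to \baire$ on a principal function $p$ by $\Psi(p)(0) = p(0)$ and $\Psi(p)(n+1) = p(n+1) - p(n) - 1$. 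Strict monotonicity of $p$ gives $p(n+1)-p(n)-1 \ge 0$, so $\Psi(p) \in \baire$.

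The key steps, in order, are the following.

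\begin{enumerate}
\item Check that $\Psi \circ \Phi = \mathrm{id}_{\baire}$ and $\Phi \circ \Psi = \mathrm{id}_{\ramsey}$. This is a direct telescoping computation: the partial sums of $\Psi(p)$, shifted by one, recover $p$, and the successive differences of $\Phi(f)$, shifted by one, recover $f$.
\item Observe that both maps are computable, working with the representation of $\ramsey$ by principal functions as in the definition above. The value $\Phi(f)(n)$ depends only on $f \upto (n+1)$, and $\Psi(p)(n)$ depends only on $p \upto (n+1)$. These finite-use properties show that the maps are given by uniformly computable operations on initial segments.
\item Conclude continuity, with the subspace topology on $\ramsey$ coming from $\baire$. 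Computable maps between these represented spaces are continuous. More concretely, the preimage under either map of a basic open set determined by a finite string is a union of basic open sets of the same length. So $\Phi$ and $\Psi$ are mutually inverse continuous bijections, i.e.\ a computable homeomorphism.
\end{enumerate}

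No step is a genuine obstacle. The only point requiring care is that $\Psi$ maps into $\baire$ and that the image of $\Phi$ is all of $\ramsey$. Both come down to the fact that the $-1$ offset exactly matches the strict inequality $p(n+1) \ge p(n)+1$ for principal functions.

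For later use (for instance in \cref{lem:fromGP}), it is also worth noting that $\Phi$ and $\Psi$ induce length-preserving bijections on finite strings: strictly increasing finite sequences on one side, arbitrary elements of $\omega^{<\omega}$ on the other. Under these bijections a tree of the form $[X]^{<\omega}$ corresponds to a Laver tree. This is formalizable in $\rca$, since all the operations involved are primitive recursive.
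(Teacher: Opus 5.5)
Your argument is correct. The paper states this lemma as folklore and gives no proof, and your coding by partial sums and successive differences is the standard argument: the $-1$ offset absorbs the strict inequality $p(n+1)\ge p(n)+1$, and the finite-use observation gives computability and continuity in both directions. Your closing remark is also right: the induced bijection on finite strings commutes with restriction and carries $[X]^{<\omega}$ to a Laver tree with empty stem, which is the fact tacitly used in the proof of Lemma \ref{lem:fromGP}.
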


    \begin{theorem}[{Galvin-Prikry, see \cite[Theorem 19.11]{kechris}}]
        Every Borel subset $A$ of $\ramsey$ is Ramsey, i.e.\ there is some $X \in \ramsey$ such that either $[X]^{\omega} \subseteq A$ or $[X]^{\omega} \cap A = \emptyset$.
    \end{theorem}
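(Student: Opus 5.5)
The plan is the classical combinatorial-forcing argument (Galvin–Prikry, as streamlined by Nash-Williams and Ellentuck). Rather than proving the Ramsey property directly for Borel sets, I will prove the stronger, localized property of being \emph{completely Ramsey}. A set $A \subseteq \ramsey$ is completely Ramsey if for every finite $s \subseteq \omega$ and every infinite $X$ with $\min X > \max s$ there is an infinite $Y \subseteq X$ such that either $[s,Y] \subseteq A$ or $[s,Y] \cap A = \emptyset$. Here $[s,Y] = \{Z \in \ramsey : s \sqsubseteq Z \subseteq s \cup Y\}$, where $s \sqsubseteq Z$ means $s$ is an initial segment of $Z$. Taking $s=\emptyset$ gives the theorem as stated. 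Since this property is trivially closed under complements, the work is to show that open sets have it and that it is closed under countable unions. Then it holds for every Borel set.

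\textbf{Open sets.} Fix an open $A$. Say $X$ \emph{accepts} $s$ if $[s,X] \subseteq A$, and $X$ \emph{rejects} $s$ if no infinite $Y\subseteq X$ accepts $s$. Both notions pass to infinite subsets, so every $X$ has an infinite subset that either accepts or rejects $s$.
\begin{enumerate}
\item A diagonal fusion argument produces $Y \subseteq X$ that, for every finite $t$ with $\max t < $ the next element of $Y$, either accepts or rejects $s \cup t$. At stage $n$ only finitely many $t$ need deciding, and one shrinks the tail of $Y$ once for each.
\item If $Y$ accepts $s$, we are done. Otherwise $Y$ rejects $s$. A pigeonhole argument shows that, after shrinking $Y$ further, $Y$ rejects every $s \cup t$ with $t \subseteq Y$ finite. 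The reason: if infinitely many $n \in Y$ had $Y$ accepting $s \cup\{n\}$, the set of those $n$ would accept $s$.
\item Openness now gives the other half. If some $Z \in [s,Y]$ lay in $A$, then a basic neighbourhood $[s\cup t, Z\setminus (s \cup t)]$ would be contained in $A$. Hence a tail of $Y$ would accept $s\cup t$, contradicting rejection. So $[s,Y]\cap A=\emptyset$.
\end{enumerate}

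\textbf{Countable unions.} Let $A=\bigcup_k A_k$ with each $A_k$ completely Ramsey. Fix $s$ and $X$. Build by fusion a sequence $X \supseteq Y_0 \supseteq Y_1 \supseteq \dots$ with increasing fixed initial elements, and let $Y$ be its diagonal intersection. Arrange that for each $k$ and each finite $t \subseteq \{y_0,\dots,y_k\}$, the set $[s\cup t, Y\setminus\{y_0,\dots,y_k\}]$ is either contained in $A_k$ or disjoint from it. This uses complete Ramseyness of $A_k$ finitely many times per stage. If some such neighbourhood lies inside some $A_k \subseteq A$, shrink $X$ accordingly and we get a homogeneous set inside $A$. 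If all are disjoint from the $A_k$, one checks that $[s,Y]\cap A_k$ is empty: every $Z\in[s,Y]$ lies in a neighbourhood of the type handled at stage $k$. Actually this needs more care, which is the obstacle below.

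\textbf{Main obstacle.} The delicate step is the union case: the case analysis does not close directly. The standard fix, which I will follow, is to show first that completely Ramsey sets are exactly the sets with the Baire property in the Ellentuck topology, and that Ellentuck-meager sets are Ramsey null. This is Ellentuck's theorem, and its proof reuses the accept/reject fusion from the open case. The $\sigma$-algebra of sets with the Baire property then contains all metrically Borel sets, since metrically open sets are Ellentuck-open, and this completes the proof.
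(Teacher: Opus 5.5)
Your final route is the standard proof via Ellentuck's theorem, and it is essentially the argument in the Kechris reference the paper cites; the paper gives no proof of its own. The route runs as follows: the accept/reject argument shows that Ellentuck-open sets are completely Ramsey (your step 3 goes through verbatim, since $Z\setminus(s\cup t)$ is an infinite subset of $Y$ accepting $s\cup t$), a fusion shows that Ellentuck-meager sets are Ramsey null, and every Borel set has the Ellentuck Baire property.

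Your abandoned countable-unions paragraph is wrong as written, because $[s\cup t,\cdot]\subseteq A_k$ says nothing about $[s,\cdot]$. It can still be closed directly without meager sets: after your fusion, $[s,Y]\cap A$ equals the Ellentuck-open union $U$ of those $[s\cup t,Y\setminus\{y_0,\dots,y_k\}]$ that lie inside $A_k$, so applying the Ellentuck-open case to $U$ finishes.
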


    When we use the Galvin-Prikry theorem to prove facts about the Laver Partition Theorem we tacitly identify $\baire$ with $\ramsey$: this is justified in light of Lemma \ref{lem:ramseyisbaire}.
	
	We now introduce the natural representation of Laver trees.
	This kind of representation is implicit in the fusion arguments which are common in applications of Laver forcing.
	
	\begin{definition}\label{def:codesl}
		We simultaneously define the set of partial codes $C \subseteq \omega^{<\omega}$ for Laver trees and a decoding function $T \colon C \rightarrow [\omega^{<\omega}]^{<\omega}$, as follows:
		\begin{itemize}
			\item $\langle \rangle \in C$, and $T(\langle \rangle)= \emptyset$.
			\item if $\sigma \in C$ and $T(\sigma)$ has already been defined, then $\concat{\sigma}{n} \in C$ if and only if (i) $n$ codes a set of strings $D \subseteq \omega^{<\omega}$ of cardinality $|T(\sigma)|$ such that every string in $T(\sigma)$ has exactly one immediate successor in $D$ and (ii) if $\{\tau, \concat{\tau}{i}\} \subseteq T(\sigma)$ and $\concat{\tau}{j} \in D$, then $j>i$. If these conditions are met, we let $T(\concat{\sigma}{n})=T(\sigma)\cup D$.
		\end{itemize}
	\end{definition}
	
	It is easy to see that (i) $C$ is a computable Laver tree, (ii) for every $\sigma \in C$, $T(\sigma)$ is a finite tree, and (iii) if $g \in [C]$, then $\bigcup_{n \in \omega}T(g \upto n)$ is a Laver tree.
	
	Since $C$ is a computable Laver tree, there is a straightforward computable isomorphism of trees $j \colon \omega^{<\omega} \rightarrow C$ which preserves the lexicographic order. The function $j$ induces the computable homeomorphism $J \colon \baire \rightarrow [C]$.
	
	\begin{definition}
		We define the representation $\LL$ of the set of Laver trees as $\LL(f)=\bigcup_{n \in \omega}T(J(f) \upto n)$ for every $f \in \baire$.
	\end{definition}
	
	One can see that the representation $\LL$ is actually a bijection, and it is computably equivalent to the representation of Laver trees via characteristic functions. We adopt the representation $\LL$ for convenience (for example, using $\LL$ allows us to see every element of $\baire$ as a code for a Laver tree).
	
	\subsubsection{Some benchmarks and some operations on the Weihrauch degrees}
	
	We now introduce several well-known benchmark functions which are useful to characterize the Weihrauch degrees associated to theorems of proof theoretic strength around that of $\atr$. We also introduce some standard operations to make new Weihrauch degrees from old ones. These often have an immediate computational meaning, and they are widely used to establish reductions and separations.
	
	\begin{definition}
		Let $\tr$ be the space of trees on $\omega$, represented via characteristic functions.
		We define the following functions:
		\begin{itemize}
			\item $\mathsf{LPO} \colon \cantor \rightarrow 2$ defined as $\mathsf{LPO}(p)=1$ if there is some $n$ such that $p(n)=1$ and $\mathsf{LPO}(p)=0$ otherwise,
			\item $\mathsf{WF} \colon \tr \rightarrow 2$ defined as $\mathsf{WF}(T)=1$ if $[T]\neq \emptyset$ and $\mathsf{WF}(T)=0$ otherwise,
			\item $\lim \subc \baire \rightarrow \baire$, mapping a convergent sequence (coded as a single real) to its limit,
			\item $\UCBaire \subc \tr \rightarrow \baire$, with $\dom(\UCBaire)=\{T \in \tr : |[T]|=1 \}$ and, $\UCBaire(T)=x$ if and only if $[T]=\{x\}$,
			\item $\CBaire \subc \tr \multif \baire$, with $\dom(\CBaire)=\{T \in \tr : [T] \neq \emptyset \}$, defined as $\CBaire(T)=[T]$,
			\item $\CCantor \subc \tr \multif \cantor$, with $\dom(\CCantor)=\{T \in \tr : T \subseteq 2^{<\omega} \land [T]\neq \emptyset\}$, defined as $\CCantor(T)=[T]$
		\end{itemize}

        The function $\UCBaire$ is known as unique choice on Baire space, while $\CBaire$ and $\CCantor$ are known as choice on Baire and Cantor space, respectively.
      \end{definition}

    We define operations on multi-valued functions between represented spaces.

	\begin{definition}
        We define the totalization and strong totalization operators. They are commonly applied to choice problems. Let $f \subc X \multif Y$ be any problem:
		\begin{itemize}
			\item 
            $\mathsf{t}f \colon X \multif Y$
            is defined by $\mathsf{t}f(x)=f(x)$ if $x \in \dom(f)$, and $\mathsf{t}f(x)=Y$ otherwise,
			\item $\mathsf{st}f \colon X \multif 2 \times Y$ is defined by $\mathsf{st}f(x)=\{1\}\times f(x)$ if $x \in \dom(f)$, and $\mathsf{st}f(x)=\{0\} \times Y$ otherwise.
		\end{itemize}
	\end{definition}
	\begin{definition}
		Let $f \subc X \multif Y$ and $g \subc A \multif B$ be partial multi-valued functions. We define:
		\begin{itemize}
			\item the parallel product of $f$ and $g$, as $(f \times g) \subc X \times A \rightarrow Y \times B$, with $(f\times g)(x,a)=f(x) \times g(a)$,
			\item the parallelization of $f$, as $\widehat{f} \subc X^{\omega} \rightarrow Y^{\omega}$ with $\widehat{f}((p_i)_{i \in \omega})=\prod_{i \in \omega}f(p_i)$,
		\end{itemize}
		We also use the compositional product $f \star g$. The precise definition is a bit technical, so we skip it and refer the interested reader to \cite[Definition 11.5.3]{Brattka2021}. For our purposes, $f \star g$ corresponds to $g$ followed by a computable operation and then $f$.
	\end{definition}
	
	We conclude our definitions with that of cylinder, a type of function which, intuitively, remembers its input.
	
	\begin{definition}
		A function $f \subc X \multif Y$ is a cylinder if $\id{{\baire}} \times f \equiv_{\sW} f$.
	\end{definition}
	
	We end the section with two Proposition summarizing some well-known facts about the benchmarks introduced above which we will use extensively in the paper.
	\begin{proposition}\label{prop:summary}
		The following hold.
		\begin{enumerate}
			\item Being a cylinder is preserved under strong Weihrauch equivalence,
			\item the functions $\CCantor$, $\lim$, $\CBaire$ and $\UCBaire$ are all cylinders,
			\item $\CBaire$ is closed under composition, i.e.\ if $f \lew \CBaire$ and $g \lew \CBaire$, then $f \star g \lew \CBaire$,
			\item $\tCCantor \equiv_{\sW} \CCantor$,
			\item $\lim \equiv_{\sW} \J$, where $\J$ denotes the Turing jump operator,
			\item for every $f$ and $g$, we have $f \leq^{\ari}_{\W} g$ if and only if there are $n$ and $m$ such that $f \lew \lim^{[n]} \star g \star \lim^{[m]}$.
		\end{enumerate}
	\end{proposition}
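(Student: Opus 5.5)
The plan is to treat the six items separately. Most of them are standard facts from the literature (Brattka, Gherardi, Pauly, de Brecht, and the survey \cite{Brattka2021}), so I will cite them where possible and give short arguments for the easy ones. The main point is to make sure each argument uses only the coding conventions fixed above.

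For (1), suppose $f \equiv_{\sW} g$ and $g$ is a cylinder. Then $\id{\baire} \times f \lews \id{\baire} \times g \lews g \lews f$. The first step holds because strong reductions can be taken in parallel with the identity. The reverse inequality $f \lews \id{\baire} \times f$ is trivial.

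For (2), it suffices to show $\id{\baire} \times h \lews h$ for each of the four problems $h$, by encoding $p$ into the input in a way that can be read back from any solution.
\begin{itemize}
\item For $\CBaire$ and $\UCBaire$: given $(p,T)$, form the tree $T'$ whose paths are exactly the $p \oplus x$ with $x \in [T]$. This is computable from $(p,T)$ and preserves uniqueness of paths. The postprocessing then reads off both $p$ and $x$.
\item For $\CCantor$: code $p$ in binary as $0^{p(0)}10^{p(1)}1\dots$ and interleave it with $[T]$ in the same way.
\item For $\lim$: map a sequence $(x_i)$ to the sequence $(p,x_i)$, whose limit is $(p,\lim x_i)$.
\end{itemize}

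For (5), the reduction $\J \lews \lim$ uses the sequence of stage-$s$ approximations to $p'$. The converse $\lim \lews \J$ holds because the limit of a convergent sequence is computable from its jump, uniformly.

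For (4), the nontrivial direction is $\tCCantor \lews \CCantor$. Given a binary tree $T$, build $T'$ consisting of:
\begin{itemize}
\item every $\sigma \in T$;
\item every extension by zeros of a node $\tau \in T$, added once we discover, by a stage bounded by the length of the extension, that every node of $T$ of length $|\tau|+1$ is missing.
\end{itemize}
Then $T'$ is a computable binary tree with $[T'] \neq \emptyset$ always, and $[T'] = [T]$ whenever $[T] \neq \emptyset$. I expect the only care needed here is to check that no spurious path is added when $[T] \neq \emptyset$, which holds because $T$ then has nodes at every level. Otherwise I would just cite the known result.

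For (3), I would cite the closure of $\CBaire$ under composition (Brattka–de Brecht–Pauly). The sketch: given the reductions for $f$ and $g$, a solution to $f \star g$ is a pair $(x,y)$ with $x \in [T]$ and $y$ a path through the tree computed from $\langle p,x\rangle$. The set of such pairs is a $\Pi^0_1$ subset of Baire space, computable uniformly in $p$, hence the body of a computable tree. So one application of $\CBaire$ suffices, with the cylinder property from (2) absorbing the pre- and postprocessing.

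For (6), the approach is as follows.
\begin{itemize}
\item Any arithmetical function on Baire space is computable from the $n$-th jump of its input for some $n$, uniformly. By (5), and since $\lim$ is a cylinder, an arithmetical preprocessing $\varphi$ can be replaced by $\lim^{[m]}$ followed by a computable map.
\item The postprocessing $\psi$ needs both $p$ and $G(\varphi(p))$. Because $\lim^{[m]}$ is a cylinder, $p$ can be carried through the first stage, so arithmetical postprocessing becomes $\lim^{[n]}$ applied after $g$, with computable glue.
\item Conversely, $\lim^{[n]}$ and $\lim^{[m]}$ are arithmetical, so any reduction to $\lim^{[n]} \star g \star \lim^{[m]}$ yields an arithmetical Weihrauch reduction to $g$.
\end{itemize}
I expect the main obstacle to be the bookkeeping with $\star$: the original input must remain available to the final postprocessing. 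I would handle this by systematically using the cylinder property of $\lim$.
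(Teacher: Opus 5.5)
The paper does not prove this proposition. It lists the six items as well-known facts and leaves them to the literature. Your proposal is therefore a different, more self-contained route: you keep a citation only for (3) and give direct arguments for (1), (2), (4), (5) and (6), and these arguments are correct. What this buys is that everything is checked against the conventions the paper actually fixes. For instance, because trees in $\tr$ are given by characteristic functions, (4) is easier than in the usual setting, where closed sets come with negative information and compactness is needed to make emptiness semi-decidable.

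Some points deserve tightening.

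\begin{enumerate}
\item In (4), the stage bound is superfluous and, as phrased, could break closure under prefixes. Simply put $\tau 0^k$ into $T'$ for every $k$ whenever $\tau \in T$ and $T \cap 2^{|\tau|+1} = \emptyset$; this condition is decidable from $T$.
\item Also in (4), you should verify $[T'] \neq \emptyset$ when $[T] = \emptyset$: by K\H{o}nig's lemma $T$ is finite, and a node of maximal length works. Always add the root so the empty tree is covered. If $\tCCantor$ is read on all of $\tr$, first intersect the input with $2^{<\omega}$.
\item In (3), the condition ``$y$ is a path through the tree computed from $\langle p,x\rangle$'' is $\Pi^0_1$ only in the form ``no initial segment of $y$ is ever rejected.'' This is legitimate because the functional is total on $[T]$, though possibly partial off it.
\item In (6), carrying $p$ past $g$ does not follow from $\lim$ being a cylinder. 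It comes from the identity component built into the compositional product; equivalently, $\mathsf{id}_{\baire} \times h \lew h$ holds for every $h$ under non-strong reducibility.
\item For the converse direction of (6), say explicitly that $\lim^{[m]}$ is single-valued. That is what makes the map producing the instance of $g$ a genuine arithmetical function.
\end{enumerate}
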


    It will be convenient to consider several incarnations of the function $\UCBaire$. Consider the following functions:
    \begin{itemize}
    \item $\atrw \colon \wo \times \omega \times 2^{\omega} \rightarrow 2^{\omega}$,\footnote{Here $\wo$ denotes the set of wellorders on $\omega$, represented via characteristic functions.} maps a wellorder $\alpha$, a code $n$ for an arithmetical operator $\varphi$, and a set $X \in 2^{\omega}$ to the (unique) hierarchy obtained iterating the operator $\varphi$ along $\alpha$, starting from $X$,
    \item $\bDelta{1}{1}$-$\mathsf{CA}$ maps a $\bDelta{1}{1}$ description of a set of natural numbers (i.e.\ a double sequence of trees $(T^0_i, T^1_i)_{i \in \omega}$ such that for every $i$ exactly one of $T^0_i$ and $T^1_i$ is wellfounded) to the characteristic function of the set it defines.
    \end{itemize}
    \begin{proposition}\label{prop:ucbequiv}
        $\UCBaire \equiv_{\sW} \atrw \equiv_{\sW} \bDelta{1}{1}$-$\mathsf{CA}$.
    \end{proposition}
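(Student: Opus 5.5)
I will establish the two equivalences $\UCBaire \equiv_{\sW} \bDelta{1}{1}$-$\mathsf{CA}$ and $\UCBaire \equiv_{\sW} \atrw$ separately, each via reductions in both directions. The equivalence of $\UCBaire$ with $\atrw$ is essentially known from the literature (Kihara--Marcone--Pauly), so the main work is to organize the reductions so they are strong, i.e.\ the postprocessing does not use the input. Where a reduction naturally needs the input, I will invoke the fact that $\UCBaire$ is a cylinder (Proposition \ref{prop:summary}): the input can be smuggled through $\UCBaire$ by taking a product with the identity.

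\textbf{$\bDelta{1}{1}$-$\mathsf{CA} \lews \UCBaire$.} Given $(T^0_i,T^1_i)_{i}$, I compute a single tree $S$ whose paths are sequences $(b_i, g_i)_{i}$ with $g_i \in [T^{1-b_i}_i]$. Taken alone this has many paths, so uniqueness must be forced. I will take the $g_i$ to be the leftmost path, expressed as an arithmetical (indeed $\Pi^1_1$-free) condition on a tree $\omega^{<\omega}$-coded path plus auxiliary data. Alternatively, I will use the standard trick that $\UCBaire$ is closed under composition with $\lim$, and hence under finitely many jumps. Choice of the leftmost path in a $\Pi^0_1$ class is computable from unique choice relative to a jump, and the first coordinate $b=(b_i)_i$ is then uniquely determined. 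Since exactly one of $T^0_i,T^1_i$ is illfounded, $b_i=1$ iff $[T^0_i]\neq\emptyset$ (up to relabeling), so the output is read off the first coordinate with no access to the input.

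\textbf{$\UCBaire \lews \atrw$.} Given $T$ with a unique path $x$, I use the fact that $\{x\}$ is $\Sigma^1_1$ and a singleton, so $x$ is hyperarithmetic in $T$. The standard argument (Spector / Kleene) shows that $x \le_T T^{(\alpha)}$ for some $\alpha<\omega_1^T$. What remains is to produce a wellorder computably from $T$. Here I expect the main obstacle: ordinals below $\omega_1^T$ are not uniformly computable from $T$. To overcome it, I will use the Kleene--Brouwer order of the tree of attempts to build a second path, suitably modified (for instance, the tree of pairs of incompatible paths through $T$, which is wellfounded). Its Kleene--Brouwer order is a $T$-computable wellorder $\alpha$. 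The height analysis of this tree lets me decide, within $\alpha$ iterations of the jump, each bit $x(n)$: the value $m$ is the unique $m$ such that $T_{\langle n,m\rangle}$-extended subtree is illfounded. Running the rank of the appropriate wellfounded subtrees along $\alpha$ settles this. I will choose the arithmetical operator so that the hierarchy encodes $x$ in a fixed column, which keeps the output postprocessing input-free.

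\textbf{$\atrw \lews \bDelta{1}{1}$-$\mathsf{CA}$.} For this direction, the hierarchy $H$ along $\alpha$ is the unique set satisfying an arithmetical condition $\theta(H)$. Then $n\in H$ iff $\exists H(\theta(H)\wedge n\in H)$ iff $\forall H(\theta(H)\to n\in H)$. Applying Kleene normal form uniformly yields trees $T^0_n,T^1_n$ with exactly one illfounded, computed from the input.

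Chaining the three reductions closes the cycle $\bDelta{1}{1}$-$\mathsf{CA} \lews \UCBaire \lews \atrw \lews \bDelta{1}{1}$-$\mathsf{CA}$, which gives both equivalences.
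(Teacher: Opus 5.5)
\paragraph{Gap in the first reduction.} Your cycle fails at its first link, $\bDelta{1}{1}$-$\mathsf{CA}\lews\UCBaire$: neither of your two devices for making the path of $S$ unique works. The trees $T^j_i$ are trees on $\omega$, so the relevant $\Pi^0_1$ classes live in $\baire$. There the leftmost path is neither definable by an arithmetical condition nor obtainable from unique choice after finitely many jumps. That is true in $\cantor$, where the leftmost path is computable from the jump of the tree, but it is false in $\baire$.

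This cannot be patched. Take a computable instance with $T^1_0=\{\langle\rangle\}$ and $T^0_0$ a computable illfounded tree with no hyperarithmetic path. The leftmost path of $T^0_0$ is then not hyperarithmetic. On the other hand, every output of $\UCBaire$, or of $\UCBaire\star\lim^{[k]}$, on a computable input is hyperarithmetic. So no input-computable $S$ can have a unique path encoding the leftmost $g_i$'s. Your other two steps only establish $\UCBaire\lews\atrw\lews\bDelta{1}{1}$-$\mathsf{CA}$.

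\paragraph{How to repair it.} The missing ingredient is the height comparison you already use in your second step, applied to each pair $(T^0_i,T^1_i)$:
\begin{itemize}
\item Since exactly one of $T^0_i$, $T^1_i$ is illfounded, the tree $T^0_i\otimes T^1_i$ of equal-length pairs is wellfounded.
\item Its height is at least that of the wellfounded factor: pair each node of that factor with the corresponding initial segment of a path through the other factor.
\item Summing over $i$ gives an input-computable wellfounded tree $W$.
\item Iterate the ranking operator along $\mathsf{KB}(W)+\omega$, with the top level placed at a fixed index. This decides which of $T^0_i$, $T^1_i$ is wellfounded, and so gives $\bDelta{1}{1}$-$\mathsf{CA}\lews\atrw$.
\end{itemize}
Equivalently, in your first step you could take as the unique witness the rank function of the wellfounded factor into $\mathsf{KB}(W)$, instead of a path through the illfounded factor.

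The cycle then closes in the other order:
\begin{itemize}
\item $\atrw\lews\UCBaire$: the hierarchy, together with Skolem witnesses for its arithmetical clauses, is the unique path of an input-computable tree. Uniqueness comes from the wellfoundedness of the order.
\item $\UCBaire\lews\bDelta{1}{1}$-$\mathsf{CA}$: this is immediate from the trees $\{\sigma\in T : |\sigma|\le n\lor\sigma(n)=m\}$ and $\{\sigma\in T : |\sigma|\le n\lor\sigma(n)\neq m\}$, exactly one of which is illfounded.
\end{itemize}
The paper gives no argument of its own here and simply cites Kihara--Marcone--Pauly.

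\paragraph{Smaller points in your second step.} Your ``tree of pairs of incompatible paths'' must record the coordinate where the two paths split, because $y\neq z$ is an open condition, not a closed one. You should also state explicitly that the height of this tree bounds the heights of the wellfounded trees $T_{n,m}$. That bound is exactly why ranking along its Kleene--Brouwer order suffices.
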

    \begin{proof}
        See \cite[Theorems 3.11 and 3.13]{kmp}.
	\end{proof}
	
	\subsection{An explicitly quantitative version of Kleene's hyperarithmetical quantifier theorem}

	We recall that the Spector-Gandy theorem states that $\lPi{1}{1}$ predicates coincide with predicates of the form $\exists f \in \lDelta{1}{1} \,\, \varphi(f)$, where $\varphi$ is a $\lPi{0}{1}$ formula.
	This theorem is uniformly effective, namely, there are computable functions transforming each $\lPi{1}{1}$ formula $\varphi$ into an equivalent formula of the form $\exists f \in \lDelta{1}{1} \,\, \psi(f)$ with $\psi$ of complexity $\lPi{0}{1}$, and vice versa. In the converse direction, one can even take $\psi$ to be $\lPi{1}{1}$ and uniformly and effectively obtain a $\lPi{1}{1}$ formula $\varphi$ which is equivalent to $\exists f \in \lDelta{1}{1} \,\, \psi(f)$.
	The statement that predicates of the form $\exists f \in \lDelta{1}{1} \,\, \psi(f)$, with $\psi \in \lPi{1}{1}$, are $\lPi{1}{1}$, is known as Kleene's hyperarithmetical quantifier theorem.
	
	Combining Kleene's hyperarithmetical quantifier theorem with Kleene's normal form theorem for $\lPi{1}{1}$ predicates we obtain a computable function $T$ mapping predicates of the form $\chi=\exists f \in \lDelta{1}{1}\,\, \psi(f)$ to trees $T(\chi)$ such that, for every $\chi$, $\chi \leftrightarrow [T(\chi)] = \emptyset$. This function is extracted essentially from the proof of the two aforementioned theorems. We show that the standard proofs yield an additional quantitative bound on the minimal complexity of a witness $f$ for $\psi$.
	
	\begin{theorem}\label{thm:qsg}
		There is a computable function $T$ mapping formulae $\chi$ of the form $\exists f \in \lDelta{1}{1} \,\, \psi(f)$ with $\psi \in \lPi{1}{1}$ to trees such that, if $\chi$ holds, then $T(\chi)$ is wellfounded and there is a witness $f$ such that $\psi(f)$ and $f \leq_{\T} 0^{(\alpha)}$ where $\alpha=\height(T(\chi))$.
	\end{theorem}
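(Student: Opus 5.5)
We will trace the standard proof of Kleene's hyperarithmetical quantifier theorem through the ordinal notations and keep track of heights. The key tool is the uniform representation of $\lDelta{1}{1}$ reals via $\ko$: every hyperarithmetical $f$ has the form $f=\Phi_e(H_a)$ for some $a\in\ko$ and some index $e$, where $H_a$ is the $H$-set at $a$, Turing equivalent to $0^{(|a|)}$ (up to the usual finite offset). Therefore $\chi\equiv\exists a\,\exists e\,[a\in\ko\land \Phi_e(H_a)\text{ is total}\land \psi(\Phi_e(H_a))]$. Using $\lPi{1}{1}$-completeness of $\ko$ and Kleene's normal form, we get uniformly computable trees $S_a$ with $a\in\ko\iff S_a$ wellfounded, with $\height(S_a)\ge|a|$; this holds because the standard reduction embeds the ordering $<_\ko$ below $a$ into $S_a$. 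Similarly, for $a\in\ko$ the predicate ``$\Phi_e(H_a)$ total and $\psi(\Phi_e(H_a))$'' is $\lPi{1}{1}$, uniformly in $(a,e)$, because $H_a$ is $\lPi{1}{1}$-definable relative to $a\in\ko$ (``the unique $H$-set along $a$''). It therefore has computable normal-form trees $R_{a,e}$.

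Next we build $T(\chi)$. The existential quantifier over $(a,e)$ turns $\chi$ into a disjunction of $\lPi{1}{1}$ statements, which is itself $\lPi{1}{1}$. The naive normal form, however, would make a branch choose $(a,e)$, and an existential is then not wellfoundedness. The trick in the standard proof is that $\exists a\in\ko$ is absorbed by $\lPi{1}{1}$ via Spector's boundedness: quantifying over hyperarithmetic witnesses is equivalent to a $\lPi{1}{1}$ form with a ``search'' tree. Concretely, we define $T(\chi)$ so that a branch through it witnesses $\neg\chi$: for every $(a,e)$, either $a\notin\ko$ (a branch through $S_a$) or the condition fails (a branch through $R_{a,e}$). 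We code this as a tree whose nodes interleave partial attempts at all pairs $(a,e)$. This is the usual proof that $\lPi{1}{1}$ is closed under number quantifiers, combined with the fact that $\forall a\,[a\notin\ko\lor\ldots]$ is $\lSigma{1}{1}$, so its negation is $\lPi{1}{1}$. The tree is computable from $\chi$.

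The quantitative part is the height bound, and we expect it to be the main obstacle. We must show that if $\chi$ holds then some witness lives at level $\le\height(T(\chi))$. The plan is to argue by contraposition. Suppose every witness $f$ has $f\not\le_\T 0^{(\beta)}$ for all $\beta\le\alpha=\height(T(\chi))$. We want to construct a branch, or show $\height(T(\chi))>\alpha$. In the natural construction, for each pair $(a,e)$ with $a\in\ko$ a genuine witness, $T(\chi)$ must contain a copy of $S_a$ that is wellfounded. That copy cannot be avoided by a branch, and it contributes height at least $|a|$. So the rank of $T(\chi)$ is at least $\min\{|a| : \exists e\,\text{witness at }(a,e)\}$. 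The trees must be arranged so that the height of $T(\chi)$ is at least the notation-length of some realized witness, namely the least one found along the search. Hence $f=\Phi_e(H_a)\le_\T 0^{(|a|)}\le_\T 0^{(\alpha)}$. The delicate points are two. First, the tree rank of $S_a$ must dominate $|a|$; to ensure this, we build $S_a$ to contain the $<_\ko$-predecessor structure of $a$ explicitly. Second, the finite offsets between $H_a$ and $0^{(|a|)}$ must be handled; we absorb them by padding the tree with a few extra levels.
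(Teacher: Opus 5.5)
Your proposal is correct and follows the paper's proof. Both rewrite $\chi$ as $\exists e\,\exists a\,[a\in\ko\land\forall X\,(\mathrm{H}(a,X)\rightarrow\ldots)]$ and take $T(\chi)$ to be the interleaving over all pairs $(e,a)$ of disjoint sums of a tree for $a\in\ko$ (of height $\geq |a|$) and a tree for the remaining $\lPi{1}{1}$ clause, then use that an interleaving has height at least that of its shortest wellfounded component. Your choice of the witness pair minimizing $|a|$ is exactly what that bound supports, and the appeal to Spector boundedness is unnecessary, since only number quantifiers are being absorbed.
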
 
	
	We believe this result is folklore. However, we were unable to find this quantitative bound explicitly in the literature. For this reason we repeat the usual proof found in {\cite[Corollary II.1.4 and Proposition I.5.3]{sacks2017}}, sketching a proof of the complexity bound.
	
	\begin{proof}
		Consider a predicate of the form $\chi= \exists f \in \lDelta{1}{1} \,\, \psi(f)$, where $\psi \in \lPi{1}{1}$. By the well-known characterization of hyperarithmetic sets, this is equivalent to
		\[\rho : \exists e \, \exists a \,\, (a \in \ko \land \forall X \, (\mathrm{H}(a, X) \rightarrow \Phi_e(X) \text{ is total} \land \psi(\Phi_e(X)))\]
		where $\mathrm{H}(a,X)$ is the $\lPi{0}{2}$ formula stating that $X$ is the (unique) jump hierarchy starting from $\emptyset$ along the linear order coded by $a$. It is not hard to see that $\rho$ is $\lPi{1}{1}$. Now Kleene's normal form theorem gives a computable procedure to define a tree $T$ that is wellfounded if and only if $\rho$ holds. One can define this tree from $\rho$ attacking one subformula at a time, and in this context the closure properties of $\lPi{1}{1}$ formulae correspond to suitable operation on trees.
		
		For example, the existential number quantifiers correspond to interleaving of trees, so that, for every pair $(e,a)$ one can define a tree $T_{e, a}$ which is wellfounded if and only if
		\[a \in \ko \land \forall X \, (\mathrm{H}(a, X) \rightarrow \Phi_{e}(X) \text{ is total} \land \psi(\Phi_{e}(X))).\]
		The tree corresponding to $\rho$ is the interleaving of the countably many $(T_{e,a})_{e,a \in \omega}$. Similarly, conjunction corresponds to the disjoint sum of trees. Thus for every fixed $(e, a)$, one has the tree $T^0_{e, a}$ corresponding to the formula $a \in \ko$, and the tree $T^1_{e, a}$ corresponding to the formula ``$\forall X \, (\mathrm{H}(a, X) \rightarrow \Phi_{e}(X) \text{ is total} \land \psi(\Phi_{e}(X)))$''. Note that the standard definition of $T^0_{e, a}$ is the tree of finite $<_{a}$-decreasing sequences, and if $a \in \ko$ then $\height(T^0_{e,a})=|a|$, the ordinal coded by $a$.
		
		It is known that, if $(T_n)_{n \in \omega}$ is a sequence of trees and at least one is wellfounded, then the interleaving of the trees is wellfounded, and its height is greater than or equal to that of the ``shortest'' wellfounded tree in the sequence. On the other hand, the sum of $(T_n)_{n \in \omega}$ is wellfounded if and only if all of the $T_n$'s are wellfounded, and the height of the sum majorizes that of each individual summand.
		
		Now assume $\chi$ holds, so there is some $\beta < \ock$ and some $f \leq_{\T} 0^{(\beta)}$ with $\psi(f)$. Fix some $\hat{e}$ and $\hat{a}$ such that $|\hat{a}|=\beta$ and if $X$ is such that $\mathrm{H}(a,X)$, then $\Phi_e(X)=f$. We obtain that $T_{\hat{e},\hat{a}}$, and hence $T(\chi)$, is wellfounded. Moreover, $\beta=|\hat{a}| =\height(T^0_{\hat{e}, \hat{a}})\leq \height(T_{\hat{e},\hat{a}}) \leq \height(T(\chi))$ and hence $f \leq_{\T} 0^{(\height(T(\chi)))}$.
	\end{proof}
	
	\section{Computability theoretic bounds}
	
	In this section we sketch a $\zfc$ proof of $\hls$ and $\hlp$ (for a detailed proof, see \cite{GO}), to extract computability theoretic bounds for the solutions of these problems. These bounds will help us classify the Weihrauch degrees of functions related to the Laver Partition theorem. The first proof of this result, which in full generality applies to $\bSigma{1}{1}$ sets, is in \cite{miller}, and uses similar ideas. 
	
	\begin{theorem}\label{thm:hl}
		Let $A \subseteq \omega^{<\omega}$ be the code for an open set. There is either a $\lDelta{1}{1}(A)$ Laver tree $T$ with $[T] \subseteq \open{A}$, or a Hechler tree $S$ with $[S] \subseteq \baire \setminus \open{A}$.
		
		Moreover, there is either a Hechler tree $T$ with $[T] \subseteq \open{A}$ or a Laver tree $S$ with $[S] \subseteq \baire \setminus \open{A}$.
	\end{theorem}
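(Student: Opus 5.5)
We will prove both parts by a rank analysis on $\omega^{<\omega}$, for the open set $\open{A}$ and for its complement.

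\textbf{First part.} Define a rank function $r$ on strings by transfinite induction.
\begin{itemize}
\item Set $r(\sigma)=0$ if some prefix of $\sigma$ lies in $A$.
\item Otherwise, set $r(\sigma)\le\beta$ if infinitely many $n$ satisfy $r(\concat{\sigma}{n})<\beta$.
\end{itemize}
Let $r(\sigma)=\infty$ if no ordinal works.

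Suppose $r(\langle\rangle)<\infty$. Build $T$ by keeping, at each node $\sigma$ with $r(\sigma)>0$, exactly the infinitely many children of strictly smaller rank. At nodes of rank $0$ (those already in $\open{A}$), keep all children. Every branch of $T$ strictly decreases rank until it reaches rank $0$, so it enters $\open{A}$. Hence $T$ is a Laver tree with $[T]\subseteq\open{A}$.

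For the $\lDelta{1}{1}(A)$ bound, note that the sets $\{\sigma : r(\sigma)\le\beta\}$ form an $A$-arithmetical transfinite recursion. Each stage is $\lPi{0}{2}$ in the previous stages, since ``infinitely many $n$'' is $\lPi{0}{2}$. The closure ordinal is at most $\omega_1^A$, because the relation ``$r(\sigma)<\infty$'' is $\lPi{1}{1}(A)$-inductive. By the usual boundedness argument, if $r(\langle\rangle)<\infty$ then $r(\langle\rangle)<\omega_1^A$. The key point is that the set of strings of finite rank is a monotone $\lPi{1}{1}$ inductive definition, and the sub-relation computed up to a fixed $\alpha<\omega_1^A$ is $\lDelta{1}{1}(A)$. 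Choosing children by least indices then makes $T\le_{\T}A^{(\omega\cdot\alpha)}$, which is hyperarithmetic in $A$.

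Now suppose $r(\langle\rangle)=\infty$. Let $S=\{\sigma : r(\tau)=\infty \text{ for all } \tau\preceq\sigma\}$. If $r(\sigma)=\infty$, then only finitely many $n$ have $r(\concat{\sigma}{n})<\infty$. Otherwise, since ordinals are bounded, infinitely many children would share some bound $\beta$, giving $r(\sigma)\le\beta$. So $S$ is a Hechler tree. Its nodes never hit $A$, because any node hitting $A$ has rank $0$. Hence $[S]\cap\open{A}=\emptyset$.

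\textbf{Second part.} Run the dual rank, with the roles of Hechler and Laver swapped. Define $r'$ by:
\begin{itemize}
\item $r'(\sigma)=0$ if some prefix of $\sigma$ lies in $A$;
\item otherwise, $r'(\sigma)\le\beta$ if cofinitely many $n$ satisfy $r'(\concat{\sigma}{n})<\beta$.
\end{itemize}
If $r'(\langle\rangle)<\infty$, keep at each node the cofinitely many children of smaller rank. This gives a Hechler tree whose body lies inside $\open{A}$.

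If $r'(\langle\rangle)=\infty$, then at each $\sigma$ with $r'(\sigma)=\infty$, infinitely many children have rank $\infty$. If not, cofinitely many children would have ordinal rank, and bounding those ranks by their supremum would give $r'(\sigma)$ an ordinal value. So the nodes of rank $\infty$ form a Laver tree. As before, this tree avoids $\open{A}$.

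The main obstacle is the effective bound in the first part. We need to show that the rank stays below $\omega_1^A$ and that the resulting hierarchy of rank sets is uniformly $\lDelta{1}{1}(A)$. Boundedness is the step we would try first; it follows from Spector–Gandy style $\lPi{1}{1}$-inductive definitions, or alternatively from Theorem \ref{thm:qsg} applied to the $\lPi{1}{1}$ statement ``there is a hyperarithmetic Laver tree inside $\open{A}$''.
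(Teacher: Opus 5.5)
Your proof is correct. On the open side it follows the paper: the same $\einf$-rank hierarchy, with each stage $\lPi{0}{2}$ in the earlier ones. You also use the same tree of rank-descending children, keeping all children at rank-$0$ nodes, and the same fact that a ranked root has rank below $\omega_1^A$. You get that fact from Spector's closure-ordinal theorem for monotone $\lPi{1}{1}$ inductive definitions, where the paper cites $\lSigma{1}{1}$-boundedness. Your bound $A^{(\omega\cdot\alpha)}$ is coarser than the paper's $2\cdot\alpha$ jumps but suffices for $\lDelta{1}{1}(A)$.

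The genuine difference is the case where the root is unranked. The paper argues by overspill. It runs the hierarchy along an ill-founded linear order $L$ that never ranks $\langle\rangle$. It then prunes level by level along a $<_L$-descending sequence, keeping the children not ranked at the current pseudo-level. You instead run the rank through all countable ordinals in \zfc. An unranked node then has only finitely many ranked children, since otherwise their ranks would be bounded by some $\beta$ and the node would have rank at most $\beta$. So the tree of nodes with no ranked prefix is a Hechler tree missing $\open{A}$.

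Your argument is simpler and fully adequate for the statement as a \zfc{} theorem. The paper's route buys formalizability. In \atr{} (Theorem \ref{thm:hlinatr}) one cannot iterate through all countable ordinals, but one can build pseudohierarchies. The resulting Hechler tree is computable from a pseudohierarchy plus a descending sequence. This is the hierarchy/pseudohierarchy template ($\UCBaire$ versus $\CBaire$) exploited in Section \ref{sec:Wd}. The same comparison applies to the $\finf$ version in the second part.

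Two small corrections. First, in the second part the set of nodes of rank $\infty$ need not be prefix-closed: a node of ordinal rank may have finitely many children of rank $\infty$. As in your first part, take the nodes all of whose prefixes have rank $\infty$. Second, drop the suggested alternative via Theorem \ref{thm:qsg}. That theorem only bounds a witness's complexity once a $\lDelta{1}{1}$ witness is known to exist, which is what you are proving. So it cannot replace the boundedness theorem.
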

	\begin{proof}
		This is a ranking and overspill argument, also called pseudohierarchy argument in \cite{simpson}. Without loss of generality, assume that $A$ is prefix-free. For simplicity, we will also assume that $A$ is computable.
		
		We define a sequence of sets $(R_{\alpha})_{\alpha \in \omega_1}$ as follows: $R_0=\{\sigma : \exists \tau \preceq \sigma \,\, (\tau \in A)\}$ and, for every $\alpha >0$, $R_{\alpha}=\{\sigma : \alpha=\min\{\gamma : \einf n \,\, \exists \beta < \gamma \, (\concat{\sigma}{n} \in R_{\beta})\}\}$. Note that the hierarchy of the $R_{\alpha}$ is defined iterating a $\lPi{0}{2}$ operator. We say tha $\sigma$ has rank $\alpha$ if $\sigma \in R_{\alpha}$. An application of $\lSigma{1}{1}$-boundedness shows that $R_{\ock}=\emptyset$, i.e.\ either a string $\sigma$ is ranked with a computable ordinal, or it never gets ranked.
		
		If $\langle \rangle$ has rank $\alpha < \ock$, then the hierarchy $(R_{\beta})_{\beta \leq \alpha}$ is computable in $0^{(2 \cdot \alpha)}$, and the $(R_{\beta})_{\beta \leq \alpha}$-computable (and hence hyperarithmetic) tree $T$ of rank-descending sequences (with the convention that $0<0$, so that if $\sigma \in T$ has rank $0$, then $\concat{\sigma}{n} \in T$ for every $n$) is a Laver tree with $[T] \subseteq \open{A}$.
		
		On the other hand, if $\langle \rangle$ never gets ranked, then by overspill there must be some illfounded linear order $L$ along which one can build a hierarchy $(R_l)_{l \in L}$ as above, with $\langle \rangle \notin \bigcup_{l \in L} R_l$. Note that if $l <_L n$ and $\sigma \notin R_n$, it follows that finitely many of the successors of $\sigma$ are in $R_l$. Hence, given the hierarchy $(R_l)_{l \in L}$ and an $<_L$ descending sequence $f$, we can recursively build a tree as follows. We start at stage $0$ with $\langle \rangle$ and, at stage $s+1$, we take all the successors of the current leaves that are not $f(s)$-ranked. This yields a Hechler tree $S$ with $[S] \subseteq \baire \setminus \open{A}$.
		
		To swap the roles of the open and closed set, it suffices to consider hierarchies defined as above, with the quantifier $\finf$ replacing $\einf$. Note that in this case we cannot use a $\lSigma{1}{1}$-bounding argument to show that ranks are strictly below $\ock$.
	\end{proof}
	
	\begin{theorem}\label{thm:hlinatr}
		$\atr$ proves $\hls$ and $\hlp$.
	\end{theorem}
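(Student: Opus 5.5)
We formalize the ranking and overspill argument of Theorem \ref{thm:hl} inside $\atr$, using the standard pseudohierarchy machinery of \cite[Section V.4]{simpson}. Fix a code $A \subseteq \N^{<\N}$ for an open set, which we may assume prefix-free. For $\hls$, consider the arithmetical ($\lPi{0}{2}$ in $A$ and the earlier levels) operator $\Theta$ defining the rank hierarchy. Given a linear order $L$, a hierarchy along $L$ is a sequence $(R_l)_{l \in L}$ with $R_{0_L}=\{\sigma : \exists \tau\preceq\sigma\,(\tau\in A)\}$ and, for $l>0_L$, $\sigma\in R_l$ iff $\sigma\notin\bigcup_{k<_L l}R_k$ and $\einf n\,\exists k<_L l\,(\concat{\sigma}{n}\in R_k)$. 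For $\hlp$, replace $\einf$ by $\finf$ throughout. In $\atr$, hierarchies along any wellorder exist by arithmetical transfinite recursion, and they are unique along wellorders by arithmetical transfinite induction.

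\emph{Case 1.} There is a wellorder $W$ and a hierarchy along $W$ with $\langle\rangle\in R_w$ for some $w$. The tree $T$ of rank-descending sequences is then $\Delta^0_1$ in the hierarchy, so it exists. We must check that $T$ is a Hechler/Laver tree (Laver for $\hls$, Hechler for $\hlp$, by the definition of the operator) with $[T]\subseteq\open{A}$. The containment follows because any branch has ranks that are nonincreasing and decrease strictly whenever the rank is nonzero. Wellfoundedness of $W$ then forces the rank to reach $0$, i.e.\ the branch enters $\open{A}$. All of this is arithmetical reasoning relative to the hierarchy, so $\aca$ suffices.

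\emph{Case 2.} For every wellorder $W$ and every hierarchy along $W$, $\langle\rangle$ is unranked. Consider the $\lSigma{1}{1}$ formula $\varphi(X)$: ``$X$ is a linear order with a hierarchy $(R_l)_{l\in X}$ along it in which $\langle\rangle\notin\bigcup_l R_l$''. In the case hypothesis, $\varphi(W)$ holds for every wellorder $W$ (using existence of hierarchies via $\atr$). By the $\atr$ form of $\lSigma{1}{1}$ overspill, i.e.\ the fact that $\wo$ is not $\lSigma{1}{1}$ definable (\cite[Lemma V.6.1]{simpson}, or the pseudohierarchy lemma V.4.x), there is an illfounded linear order $L$ with $\varphi(L)$. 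Fix such a hierarchy and a descending sequence $f$ in $L$.

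We then build $S$ recursively, with the root $\langle\rangle$ at level $0$. A node $\concat{\sigma}{n}$ at level $s+1$ is put in $S$ iff $\sigma\in S$ and $\concat{\sigma}{n}\notin\bigcup_{k\le_L f(s+1)}R_k$. The key invariant is that each node at level $s$ is not in $\bigcup_{k\leq_L f(s)}R_k$. Since $f(s+1)<_L f(s)$, the operator definition gives that only finitely many (for $\hls$) or infinitely many (for $\hlp$) successors lie in $\bigcup_{k<_L f(s)}R_k$. So cofinitely many (resp.\ infinitely many) successors survive, and $S$ is Hechler (resp.\ Laver). The case $\sigma \in R_{f(s)}$ is excluded by the invariant; this needs a small care with the "not already ranked" clause. $S$ is $\Delta^0_1$ in $(f,(R_l))$, hence exists. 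No node of $S$ lies in $R_{0_L}$, since $0_L\le_L f(s)$, so $[S]\cap\open{A}=\emptyset$.

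The main obstacle is the overspill step. One needs the precise $\atr$ statement: a $\lSigma{1}{1}$ formula true of all wellorders is true of some non-wellorder. This is provable in $\atr$ (even $\lSigma{1}{1}$-$\mathsf{AC}_0$ plus enough to get $\wo$ is $\lPi{1}{1}$-complete). Also needed is a check that the invariant argument only uses the local (one-step) properties of pseudohierarchies, never wellfoundedness. If we cannot quote that, we would instead derive it from $\lSigma{1}{1}$-separation (Proposition \ref{prop:atrwithtrees}) as in Simpson's proof of V.6.
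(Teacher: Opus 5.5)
Your proposal is correct and follows the paper's own two-line proof: build the rank hierarchies of Theorem \ref{thm:hl} along wellorders by arithmetical transfinite recursion, and in the remaining case use $\lSigma{1}{1}$ overspill to get an illfounded pseudohierarchy, whose local properties alone suffice to build $S$. The only slip is a labelling one: with the paper's definitions the $\einf$-ranking (a Laver tree inside $\open{A}$ or a Hechler tree disjoint from it) proves $\hlp$ and the $\finf$-ranking proves $\hls$, so your names are swapped, which is harmless since you carry out both; relatedly, in the $\finf$ case you should say that infinitely many successors lie \emph{outside} $\bigcup_{k<_L f(s)}R_k$, not inside.
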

    \begin{proof}
         We can use arithmetical transfinite recursion to build the hierarchies of the proof of Theorem \ref{thm:hl}. The overspill argument is also available in $\atr$ (cf.\ the pseudohierarchies of \cite[Lemma V.4.12]{simpson}).
    \end{proof}
	
	\begin{remark}
		We point out an asymmetry in the construction of the proof of Theorem \ref{thm:hl}, which becomes visible working in models of $\atr$. Let $M$ be an $\omega$-model of $\atr$ which is not a $\beta$-model, i.e.\ such that there is a linear order $L \in M$ which is illfounded in $\mathcal{P}(\omega)$, but wellfounded from the point of view of $M$. In this context, it is possible that there is some code for an open set $A$ and a ranking for $A$ along $L$ which ranks the empty string. With this ranking $M$ might build a Laver tree $T$ such that $M \vDash [T] \subseteq \open{A}$, while in reality $\mathcal{P}(\omega) \vDash [T] \nsubseteq \open{A}$. This cannot happen in the closed case, as $\bSigma{1}{1}$ facts are upwards absolute. Hence, the construction of a Hechler tree in $M$ automatically yields a tree $S$ such that $\mathcal{P}(\omega) \vDash [S] \cap \open{A}= \emptyset$.
	\end{remark}
	
	Combining Theorem \ref{thm:hl} with Theorem \ref{thm:qsg}, we obtain.
	
	\begin{corollary}\label{cor:openbound}
		There is a computable function $f$ mapping (codes for) open sets $A \subseteq \omega^{<\omega}$ to trees such that, if there is a Laver tree $T$ with $[T] \subseteq \open{A}$, then $f(A)$ is wellfounded and there is one such Laver tree computable in $0^{(2 \cdot \height(f(A)))}$.
	\end{corollary}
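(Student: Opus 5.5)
The plan is to combine the ranking argument of Theorem \ref{thm:hl} with the quantitative hyperarithmetic quantifier theorem, Theorem \ref{thm:qsg}, relativized to the code $A$. The first step is to pass uniformly from $A$ to $\pf{A}$, so we may assume $A$ is prefix-free. Consider the $\lPi{1}{1}(A)$ predicate $\psi(T)$ stating ``$T$ is a Laver tree and $[T] \subseteq \open{A}$''. Being a Laver tree is arithmetical, and $[T] \subseteq \open{A}$ is $\lPi{1}{1}$ in $T \oplus A$, so $\psi$ is $\lPi{1}{1}(A)$. Form the sentence $\chi_A = \exists T \in \lDelta{1}{1}(A)\,\, \psi(T)$.

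The next step is to show that $\chi_A$ is equivalent to the existence of some Laver tree with body inside $\open{A}$. One direction is trivial. For the other, suppose some Laver tree $T$ has $[T] \subseteq \open{A}$. Then no Hechler tree $S$ can have $[S] \cap \open{A} = \emptyset$, because by the intersection lemma $[S] \cap [T] \neq \emptyset$. Hence, by the first dichotomy of Theorem \ref{thm:hl} relativized to $A$, there is a $\lDelta{1}{1}(A)$ Laver tree with body inside $\open{A}$, so $\chi_A$ holds.

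Now relativize Theorem \ref{thm:qsg} to $A$. Its proof goes through verbatim with $\ko^A$, jump hierarchies starting from $A$, and $\omega_1^A$ in place of $\ko$, $\emptyset$ and $\ock$. The map $\chi_A \mapsto T(\chi_A)$ is computable uniformly in an index for $\chi_A$, and the construction is uniform in $A$ as an oracle. This gives a computable map $f$ sending the code $A$ to a tree $f(A)$, computable from $A$, with the following property: if $\chi_A$ holds, then $f(A)$ is wellfounded and some witness $T$ satisfies $T \leq_{\T} A^{(\height(f(A)))}$.

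The remaining issue is the exact form of the bound, $0^{(2\cdot\height(f(A)))}$. There are two ways to handle it. The simplest is to note that, for the unrelativized statement with computable $A$, $\height(f(A)) \leq 2 \cdot \height(f(A))$ already suffices. Alternatively, we can follow the proof of Theorem \ref{thm:hl} more closely. There the witness is the tree of rank-descending sequences, computable from the hierarchy $(R_\beta)_{\beta \leq \alpha}$, which is computable in $0^{(2\cdot\alpha)}$ since each level is a $\lPi{0}{2}$ step. Here $\alpha$ is the rank of $\langle\rangle$.

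For this second route, I would redo the argument of Theorem \ref{thm:qsg} with $\psi'(a)$ stating ``$a \in \ko$ and the hierarchy along $a$ ranks $\langle\rangle$''. I would then read off the bound with the ordinal bounded by $\height(f(A))$ via the height of the tree of $<_a$-descending sequences. The main obstacle is bookkeeping: checking that the interleaving and sum operations in the Kleene normal form construction preserve the inequality $|a| \leq \height$, and that the relativization is uniform. I expect both to be routine.
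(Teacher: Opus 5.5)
Your proof is correct and follows the paper's intended argument: combine Theorem \ref{thm:hl} with Theorem \ref{thm:qsg}, relativized to $A$. You also supply the step the paper leaves implicit, namely using the intersection lemma to rule out the Hechler alternative so that $\exists T \in \lDelta{1}{1}(A)\,\psi(T)$ holds. Your first route already gives the slightly sharper bound $A^{(\height(f(A)))}$, so the rank-based second route, which is where the factor $2$ in the statement comes from, is optional.
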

	There does not seem to be a similar bound for Hechler trees whose bodies are contained in open sets. On the other hand, there is a more concrete proof of the restriction of Corollary \ref{cor:openbound} to clopen sets which also applies to Hechler trees.
	
	\begin{corollary}
		Let $(A_0, A_1)$ be prefix-free codes for open sets with $\baire \setminus \open{A_i}=\open{A_{1-i}}$, let $T_{A_i}$ be the wellfounded tree obtained closing $A_i$ under initial segments, and let $\alpha=\max\{\height(T_{A_0}), \height(T_{A_1})\}$. There is some $i \in \{0,1\}$ and a Laver tree $T$ with $[T] \subseteq \open{A_i}$ and $T \leq_{\T} 0^{(2 \cdot \alpha)}$.
	\end{corollary}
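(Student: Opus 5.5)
We will run the ranking construction from the proof of Theorem \ref{thm:hl} for both $A_0$ and $A_1$ simultaneously, and use the wellfoundedness of $T_{A_0}$ and $T_{A_1}$ to bound how far the hierarchies go. As in that proof we assume the codes are computable; relativizing gives the general case. For $i\in\{0,1\}$ define $R^i_0=\{\sigma : \exists \tau\preceq\sigma\,(\tau\in A_i)\}$ and, for $\gamma>0$, let $R^i_\gamma$ be the set of $\sigma$ whose least $\gamma$ with $\einf n\,\exists\beta<\gamma\,(\concat{\sigma}{n}\in R^i_\beta)$ is exactly $\gamma$. Each level is obtained from the earlier ones by a $\lPi{0}{2}$ operator. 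Hence $(R^i_\beta)_{\beta\le\alpha}$ is computable in $0^{(2\cdot\alpha)}$, just as in Theorem \ref{thm:hl}.

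The key claim is the following dichotomy, proved by induction on $\height$ in the wellfounded tree $T_{A_0}\cup T_{A_1}$. Since $\open{A_0}$ and $\open{A_1}$ partition $\baire$, every $\sigma$ lies in $T_{A_0}\cup T_{A_1}$ or already extends a member of some $A_i$. We show that every $\sigma$ gets a rank $\le \height_{T_{A_0}\cup T_{A_1}}(\sigma)\le\alpha$ in at least one of the two hierarchies.

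The argument runs as follows. If $\sigma$ extends an element of $A_i$, it has $R^i$-rank $0$. Otherwise $\sigma$ is in the union tree but not in either $A_i$; by the partition property it is a proper prefix of elements of both codes, or at least it has all its immediate successors in $T_{A_0}\cup T_{A_1}$ or $R^{i}_0$. By the induction hypothesis each successor $\concat{\sigma}{n}$ has rank $<\height(\sigma)$ in $R^0$ or in $R^1$. By pigeonhole, infinitely many successors are ranked in the same hierarchy $R^i$, and so $\sigma$ gets $R^i$-rank at most $\sup$ of those ranks $+1$, which is $\le\height(\sigma)$.

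This is the step I expect to be the main obstacle. The ordinal bookkeeping must check that $\height$ of the union tree is at most $\alpha$: it is the maximum, since the height of a union of two trees is the maximum of their heights. It must also check that strings above nodes of $A_{1-i}$ behave correctly, being rank $0$ in $R^{1-i}$. A clean way to handle both is to bound ranks by the height of $\sigma$ in $T_{A_0}$ or $T_{A_1}$ directly.

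Applying the claim to $\langle\rangle$ gives some $i$ with $\langle\rangle\in R^i_\beta$ for some $\beta\le\alpha$. We then take $T$ to be the tree of rank-descending sequences in $R^i$, with the convention $0<0$, exactly as in Theorem \ref{thm:hl}. It is a Laver tree, since each ranked node of positive rank has infinitely many successors of smaller rank. Every branch reaches rank $0$ by wellfoundedness of the ordinals, so $[T]\subseteq\open{A_i}$. Finally, $T$ is computable from $(R^i_\beta)_{\beta\le\alpha}$ and hence from $0^{(2\cdot\alpha)}$.
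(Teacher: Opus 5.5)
Your proof is correct, but it reaches the bound by a different route than the paper. The paper does not run the two hierarchies side by side. It first invokes Theorem \ref{thm:hl} to get some $i$ and a Laver tree $T$ with $[T]\subseteq\open{A_i}$. It then cuts $T$ off just after its first hit of $A_i$, obtaining $T'=T\cap T_{A_i}$, which is wellfounded with tree ranks bounded by those in $T_{A_i}$. By induction on $T'$ it checks that every node of $T'$ is ranked in the $A_i$-hierarchy with rank at most its $T'$-rank. Hence $\langle\rangle$ has rank at most $\height(T_{A_i})\le\alpha$, and the tree of rank-descending sequences finishes exactly as in your last paragraph.

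Your simultaneous induction on $T_{A_0}\cup T_{A_1}$, with the pigeonhole choosing the side at each node, removes the appeal to Theorem \ref{thm:hl} altogether. It is self-contained, makes the use of the clopen hypothesis transparent, and in effect reproves the clopen Laver dichotomy with the complexity bound built in. What the paper's witness-based argument buys in return is a one-sided statement: for \emph{whichever} $i$ admits a Laver tree in $\open{A_i}$, there is one computable in $0^{(2\cdot\height(T_{A_i}))}$. Your pigeonhole only yields some side, and both sides can contain Laver trees. The per-side form is what matters when the side is prescribed, as in the domain of $\lindd$. It is also the form that adapts to Hechler trees in the remark after the corollary.

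There are two small slips in your induction step, neither affecting correctness. First, a node of the union tree need not be a proper prefix of elements of both codes; what is true, and all you use, is that all its immediate successors lie in $T_{A_0}\cup T_{A_1}$. Second, the bound ``$\sup$ of the successor ranks $+1$'' can exceed $\height(\sigma)$ when $\height(\sigma)$ is a limit. Argue instead that infinitely many successors have $R^i$-rank $<\height(\sigma)$; the minimality clause in the definition of rank then gives $R^i$-rank at most $\height(\sigma)$ directly.
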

	\begin{proof}
		By Theorem \ref{thm:hl}, we know that there are $i \in \{0,1\}$ and a Laver tree $T$ such that $[T] \subseteq \open{A_i}$. Let $T'=\{\sigma \in T : \forall \tau \prec \sigma \,\, (\tau \notin A_i)\}$.
		It is easy to see that $[T] \subseteq \open{A_i}$ implies that $T'=T \cap T_{A_i}$. Consequently $T'$ is wellfounded and, for every $\sigma \in T'$, the tree rank of $\sigma$ in $T'$ is less than or equal to the tree rank of $\sigma$ in $T_{A_i}$. It is not hard to see that every string $\sigma \in T'$ appears in the ranking $(R_{\alpha}(A_i))_{\alpha \in \omega_1}$ defined as in the proof of Theorem \ref{thm:hl}, starting from $A_i$, and the tree rank of $\sigma$ in $T'$ majorizes its rank according to $(R_{\alpha}(A_i))_{\alpha \in \omega_1}$.
		
		This implies that $\langle \rangle \in R_{\beta}(A_i)$ for some $\beta \leq \height(T') \leq \height(T_{A_i})$. Again using the proof of Theorem \ref{thm:hl}, this means that there is a Laver tree $S \leq_{\T} 0^{(2 \cdot \beta)}$ (potentially different from $T'$) with $S \subseteq \open{A_i}$.
	\end{proof}
	Essentially the same proof establishes a similar bound for Hechler trees, namely: if $A$ is an clopen set and there is a Hechler tree $T$ with $[T] \subseteq \open{A}$, then there is a Hechler tree $S$ with $[S] \subseteq \open{A}$ and $S \leq_{\T} 0^{(\height(T_{A}))}$.
	
	\section{The reverse mathematics of the Laver partition theorem}\label{sec:rm}
	
	All the principles introduced are fairly strong, lying around the level of $\atr$. Working towards reversals, we start by showing that $\lld$, the weakest of all our principles, implies $\aca$.
	
	\begin{lemma}\label{lem:bootstrap}
		Over $\rca$, $\lld$ implies $\aca$.
	\end{lemma}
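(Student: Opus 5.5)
The plan is to use the characterization of $\aca$ recalled in the preliminaries (Lemma III.1.3 of \cite{simpson}). Given an injective $f \colon \N \to \N$, we build a clopen set $C$ whose membership is decided by the first three entries of a sequence. We then show that the "outside" alternative of $\lld$ cannot hold, and that any Laver tree whose body is contained in $C$ computes the range of $f$.

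Concretely, for $x \in \N^{\N}$ write $s = x(0)$, $t = x(1)$, $u = x(2)$. Put $x \in C$ iff there is no $i$ with $t \leq i < u$ and $f(i) < s$. In words: no number below $s$ is enumerated by $f$ between stages $t$ and $u$. This is a bounded, $\bDelta{0}{0}$ condition on $x \upto 3$, so in $\rca$ we can form the set of strings of length $3$ satisfying it and the set of those failing it. These two sets give the pair of open codes required for a $\bDelta{0}{1}$ code of $C$.

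First, rule out a Laver tree $S$ with $[S] \cap C = \emptyset$. Fix $\langle s \rangle \in S$, which exists since the root has infinitely many successors. Recursively choose $t_0 < u_0 \leq t_1 < u_1 \leq \dots \leq t_s < u_s$ with $\langle s, t_k\rangle \in S$ and $\langle s, t_k, u_k\rangle \in S$. This is possible because every node of $S$ has infinitely many successors, and it is a finite recursion of length $s+1$, fine in $\rca$. Every branch of $S$ through $\langle s,t_k,u_k\rangle$ lies outside $C$, so for each $k$ there is $i_k \in [t_k, u_k)$ with $f(i_k) < s$. The intervals are disjoint, so the $i_k$ are $s+1$ distinct numbers, and injectivity of $f$ gives $s+1$ distinct values below $s$. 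This contradicts the finite pigeonhole principle, which is provable in $\rca$ by $\bSigma{0}{1}$-induction. The main care in the whole proof is exactly here: we must make sure only bounded searches and finite pigeonhole are used, with no appeal to $\mathsf{B}\bSigma{0}{2}$-like principles. The argument above avoids them since all the $i_k$ can be found by bounded search inside $[t_k,u_k)$.

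Hence $\lld$ yields a Laver tree $T$ with $[T] \subseteq C$. We claim that whenever $\langle s,t\rangle \in T$, for every $j < s$ we have $j \in \ran f \leftrightarrow \exists i < t\, (f(i) = j)$. Suppose instead that $f(i) = j < s$ with $i \geq t$. Since $\langle s,t\rangle$ has infinitely many successors in $T$, pick $u > i$ with $\langle s,t,u\rangle \in T$, and extend it to a branch of $T$ (this uses only the Laver property, building the branch greedily by least successors). That branch is not in $C$, a contradiction.

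So the range of $f$ is $\bDelta{0}{1}$ in $T$. Given $n$, search for the least pair $s > n$, $t$ with $\langle s,t\rangle \in T$; such a pair exists since $T$ is Laver. Then $n \in \ran f$ iff $\exists i<t\, f(i)=n$. By $\bDelta{0}{1}$-comprehension the range exists, and $\aca$ follows.
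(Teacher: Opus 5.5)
Your proof is correct and is essentially the paper's argument. It uses the same Jockusch-style clopen set depending only on the first three coordinates, the same observation that the ``disjoint'' alternative of $\lld$ cannot occur, and the same $\bDelta{0}{1}$ definition of $\ran(f)$ from the resulting Laver tree. The only variation is in showing that every Laver tree meets the clopen set. The paper does this directly: it takes the least $l_0$ with $\langle l_0\rangle \in L$ and uses bounded $\bSigma{0}{1}$-comprehension plus $\mathsf{B}\bSigma{0}{1}$ (both available in $\rca$) to find $m_0$ with $\langle l_0, m_0\rangle \in L$ beyond all stages at which elements of $\ran(f)$ below $l_0$ appear. You instead argue by contradiction, using $s+1$ disjoint intervals and the finite pigeonhole principle.
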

	\begin{proof}
		This proof is an elaboration on Jockusch's proof that Ramsey's Theorem for triples implies $\aca$ over $\rca$. Indeed, we use the exact same coloring.
		
		Fix an injective function $f \colon \mathbb{N} \rightarrow \mathbb{N}$. It suffices to show in $\rca + \lld$ that $\ran(f)$ exists. Let $c \colon [\mathbb{N}]^3 \rightarrow \{0,1\}$ be defined as:
		\begin{equation*}
			c(l, m, n)=
			\begin{cases}
				0 &\mbox{if } \forall i \leq l \,\, (\exists j \leq m \,\, (f(j)=i) \leftrightarrow \exists j \leq n \,\, (f(j)=i)) \\
				1 &\mbox{otherwise }
			\end{cases}
		\end{equation*} 
		and define the $\bDelta{0}{1}$ set $D=\{f \in \baire : c(f \upto 3)=0\}$.
		
		We claim that if $L$ is a Laver tree, then $[L] \cap D \neq \emptyset$. Indeed, let $l_0= \min \{n : \langle n \rangle \in L\}$. By bounded $\bSigma{0}{1}$-comprehension, the set $X=\{n  \leq l_0 : \exists j \,\, (f(j)=n)\}$ exists. By $\mathsf{B}\bSigma{0}{1}$ there exists $M$ such that for every $x \in X$ there exists $j$ with $f(j)=x$ and $j \leq M$. Since $L$ is Laver, there is some $m_0 > M$ such that $\langle l_0,m_0 \rangle \in L$. It is easy to see that, for every $g \in [L]$ if $\langle l_0, m_0 \rangle \prec g$, then $g \in D$.
		
		Now, by $\lld$, there must be a Laver tree $L$ such that $[L] \subseteq D$. We can see that for every $n \in \mathbb{N}$, 
		\[
		\exists j (f(j)=n)
		\leftrightarrow \forall \sigma \in L \,\,((|\sigma| \geq 2 \land \sigma(0) \geq n) \rightarrow \exists j \leq \sigma(1) \,\, f(j)=n)),
		\]
		so that $\ran(f)$ exists by $\bDelta{0}{1}$-comprehension.
	\end{proof}
	
	We now show that the principle $\hld$ implies $\atr$ over $\rca$. The proof of this is a modification of the proof that $\bDelta{0}{1}$-$\mathsf{RT}$ implies $\atr$.
	A key step for our reversal is the following result of \cite{miller}, which strengthens the tie between Hechler and Laver trees on the one hand, and the Galvin-Prikry theorem on the other.
	
	\begin{lemma}\label{lem:containsXo}
		The following is provable in $\rca+\mathsf{B}\bSigma{0}{2}$. For every Hechler tree $T$ there is some $X \in [\mathbb{N}]^{\mathbb{N}}$ such that $[X]^{\mathbb{N}} \subseteq [T]$.
	\end{lemma}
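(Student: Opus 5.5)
We build $X=\{x_0<x_1<x_2<\dots\}$ by a recursion in which each new element is larger than every ``forbidden'' value coming from the finitely many nodes built so far. We identify an element of $[X]^{\mathbb{N}}$ with its increasing enumeration, as in Lemma~\ref{lem:ramseyisbaire}. The goal is that every finite increasing string from $X$ lies in $T$; since $T$ is a tree, this gives $[X]^{\mathbb{N}} \subseteq [T]$.

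For $\rho \in T$ let
\[
b(\rho)=\min\{k : \forall n \geq k \,\, (\concat{\rho}{n} \in T)\},
\]
which exists because $T$ is a Hechler tree. The function $b$ is not obviously $\bDelta{0}{1}$, since its graph is $\bPi{0}{1}$ in $T$. So we avoid computing $b$ exactly and only need an upper bound for finitely many values at a time.

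The recursion runs as follows. Suppose $x_0<\dots<x_{k-1}$ have been chosen so that every increasing string $\rho$ over $\{x_0,\dots,x_{k-1}\}$ is in $T$. There are only finitely many such $\rho$ (at most $2^k$). For each of them the set $F_\rho=\{n : \concat{\rho}{n}\notin T\}$ is finite. The step that genuinely needs $\mathsf{B}\bSigma{0}{2}$ is the following: we apply Proposition~\ref{prop:fuf} to conclude that $\bigcup_\rho F_\rho$ is a finite set, so it has a bound $B_k$. Alternatively, we apply $\mathsf{B}\bSigma{0}{2}$ directly to the $\bSigma{0}{2}$ formula ``$\exists k\,\forall n\geq k\, (\concat{\rho}{n}\in T)$'' indexed by the finitely many $\rho$. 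We then take $x_k$ to be greater than $B_k$ and greater than $x_{k-1}$. Every new increasing string ends in $x_k$ and has the form $\concat{\rho}{x_k}$ with $\rho$ an old string, so it lies in $T$, and the invariant is preserved.

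The main obstacle is that the recursion as described is not $\bDelta{0}{1}$, because finding a valid bound $B_k$ requires knowing $F_\rho$. We handle this in either of two ways.

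The first is a Mathias/Solovay-style search that is computable in $T$. Define $x_k$ as the least $m>x_{k-1}$ such that $\concat{\rho}{m}\in T$ for all old $\rho$. This is a $\bDelta{0}{1}$ search, and it terminates by the bound above. But it only guarantees membership at $m$ itself, which is exactly what the invariant needs, since future strings only use elements of $X$. So we actually only need existence of such an $m$, and the recursion defining $X$ is primitive recursive relative to $T$; this is where $\bSigma{0}{1}$-induction (available in $\rca$) gives totality. The existence of $m$ at each stage is precisely the statement that a finite union of finite sets is bounded, i.e.\ $\mathsf{B}\bSigma{0}{2}$ via Proposition~\ref{prop:fuf}.

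Having defined $X$ by $\bDelta{0}{1}$-comprehension, we then prove the invariant ``every increasing string over $\{x_0,\dots,x_{k-1}\}$ is in $T$'' by $\bSigma{0}{1}$ (in fact $\bDelta{0}{1}$) induction on $k$. This yields $[X]^{\mathbb{N}}\subseteq[T]$.
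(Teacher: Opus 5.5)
Your proof is correct and is essentially the paper's argument: pick each new element as the least value that extends every increasing string over the elements chosen so far, with existence at each stage coming from Proposition \ref{prop:fuf} (i.e.\ $\mathsf{B}\bSigma{0}{2}$). Two details the paper leaves implicit, you handle correctly: you require $m > x_{k-1}$ explicitly, and you prove totality of the search by $\bSigma{0}{1}$-induction. Note, though, that the recursion is an unbounded search rather than literally primitive recursive, and the announced ``second way'' is never needed.
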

	\begin{proof}
		We want to obtain an increasing sequence $f \in \mathbb{N}^{\mathbb{N}}$ such that $[\ran(f)]^{\mathbb{N}} \subseteq [T]$. We build $f$ by recursion: we start with $\sigma_0=\langle \rangle$. Now given $n \in \mathbb{N}$, assume that we have built a strictly increasing sequence $\sigma_n$ such that for every $\tau \in [\mathbb{N}]^{<\mathbb{N}}$, if $\tau$ is an increasing enumeration of a subset of $\ran(\sigma_n)$, then $\tau \in T$. We let $\sigma_{n+1}=\concat{\sigma_n}{k}$ where $k$ is least such that, for every $\tau$ as above, $\concat{\tau}{k} \in T$. To see that such a $k$  must exist note that, since $T$ is a Hechler tree, for every $\tau$ enumerating a subset of $\sigma_n$, there are finitely many $j$ such that $\concat{\tau}{j} \notin T$. There are only finitely many sequences $\tau$ as above so, since by Propoaition \ref{prop:fuf} $\rca$+$\mathsf{B}\bSigma{0}{2}$ proves that finite unions of finite sets are finite, there must be an acceptable $k$. With these definitions, we let $f= \bigcup_{n \in \mathbb{N}}\sigma_n$. Since $f$ is strictly increasing, it enumerates an infinite set $X$.
		
		Now to see that $[X]^{\mathbb{N}} \subseteq [T]^{\mathbb{N}}$ notice that, by construction, if $\tau$ is an increasing enumeration of a finite subset of $X$, then $\tau \in T$. This immediately implies that for every $Y \in [X]^{\mathbb{N}}$, $Y \in [T]$.
	\end{proof}
	
	\begin{remark}
		Lemma \ref{lem:containsXo} has an arguably simpler proof, based on the Galvin-Prikry theorem (more precisely, on $\SRT{1}$). Indeed, let $T$ be a Hechler tree and consider any $A \in \ramsey$. Since $T$ has cofinite splitting at every node, it is easy to construct, by recursion, some $B \in [A]^{\omega}$ such that $p_B \in [T]$. Since $[T]$ is a closed set, it is Ramsey, so this implies that there must be some $X$ such that $[X]^{\omega} \subseteq [T]$.
		
		Miller's proof has the advantage of dispensing with the Galvin-Prikry theorem, and indeed the end goal of \cite{miller} was to provide an alternative proof of this result based on Hechler and Laver trees. In our context, the advantage of Miller's proof is that it requires much weaker axioms.
	\end{remark}
	We can now prove that $\hld$ implies $\atr$. As mentioned above, the proof closely follows Simpson's proof that $\DRT{1}$ implies $\atr$ over $\rca$ (see \cite[Lemma V.9.6]{simpson}).
	\begin{lemma}\label{lem:hldatr}
		Over $\rca$, $\hld$ implies $\atr$.
	\end{lemma}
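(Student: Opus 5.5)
First I would use Lemma \ref{lem:bootstrap}: since $\hld$ implies $\lld$, we may work in $\aca$. I would then establish $\mathsf{B}\bSigma{0}{2}$ so that Lemma \ref{lem:containsXo} applies; $\aca$ proves full arithmetical induction, hence $\mathsf{B}\bSigma{0}{2}$. By Proposition \ref{prop:atrwithtrees} it suffices to prove $\bSigma{1}{1}$-separation in tree form. So fix a double sequence $(T^0_i,T^1_i)_{i\in\N}$ such that for each $i$ at most one of $T^0_i,T^1_i$ is wellfounded. The goal is a set $Z$ with $i\in Z\leftrightarrow [T^0_i]\neq\emptyset$. Following Simpson's proof of $\DRT{1}\to\atr$ (\cite[Lemma V.9.6]{simpson}), I would define a clopen set $D\subseteq\baire$. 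A real $g$, read through the identification of Lemma \ref{lem:ramseyisbaire} as an increasing sequence, is placed in $D$ or its complement according to a finite, decidable search. This search compares, in parallel over all $i$ and up to bounds read off from $g$, attempts to build paths through $T^0_i$ and $T^1_i$. Following Simpson, I would arrange that every real is decided after finitely many coordinates, so that $D$ and its complement both have open codes, that is, $D$ is genuinely $\bDelta{0}{1}$.

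I would then apply $\hld$ to $D$, which gives two cases: either a Hechler tree $T$ with $[T]\subseteq D$ or $[T]\cap D=\emptyset$, or a Laver tree on one side. In the Hechler case, Lemma \ref{lem:containsXo} gives an infinite $X$ with $[X]^{\N}\subseteq[T]$. This returns us exactly to the situation of Simpson's proof, namely a homogeneous set for the clopen partition. From it, his argument extracts $Z$ arithmetically in $X$: the spacing of elements of $X$ bounds where paths through illfounded trees can be found. Hence $Z$ exists by $\aca$. Since $\hld$ is a true dichotomy, there is also the Laver case, which must be handled or excluded.

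I expect the main obstacle to be making the construction of $D$ work so that only the Hechler case is usable, or is forced. The key is the asymmetry between the two kinds of tree. A Laver tree only guarantees infinitely many successors, so it cannot control the spacing of coordinates. A Hechler tree, by contrast, contains $[X]^{\N}$, so all sufficiently sparse continuations lie in it. My first attempt would be to design $D$ so that the Laver alternative is impossible. To do this, I would show directly that every Laver tree has a branch in $D$, along the lines of the proof of Lemma \ref{lem:bootstrap}: by choosing successors large enough to witness the needed finite search, we can push a branch of any Laver tree into $D$. Then $\hld$ must yield a Hechler tree inside $D$, and the previous paragraph finishes the proof.

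The delicate verification is that homogeneity of $[X]^{\N}$ inside $D$, rather than inside either side, still suffices for Simpson's decoding. If it does not, I would fall back on the symmetric version: apply $\hld$ to both $D$ and its complement (a clopen set and its complement give the swapped principle). Then in whichever case a Hechler tree appears, I would invoke Lemma \ref{lem:containsXo}, and in each Laver case I would derive a contradiction by the argument of the preceding paragraph.
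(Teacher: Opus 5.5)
The gap is the step your whole plan rests on: showing that no Laver tree is disjoint from $D$, so that $\hld$ must return a Hechler tree inside $D$. You never supply a $D$ with this property. The mechanism you suggest, choosing successors large enough as in Lemma~\ref{lem:bootstrap}, fails for the set that Simpson's decoding actually needs. That set is $C=\{f : s_{f(0)}(\sigma_0(f))=s_{f(0)}(\sigma_1(f))\}$, the reals whose first two $f(0)$-critical blocks have the same $s$-value.

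A Laver tree may confine its successors to any infinite set, for instance a parity class. When both $T^0_i$ and $T^1_i$ are wellfounded, which $\bSigma{1}{1}$-separation allows, the $s$-value of a block can depend on exactly such features. For example, for every $i$ let $T^0_i=\{\tau : |\tau|\le \tau(0)\}$ and $T^1_i=\{\tau : |\tau|\le g(\tau(0))\}$, where $g(k)=k+1$ for odd $k$ and $g(k)=0$ for even $k$. A block starting with value $a$ becomes critical exactly at length $a+1$. Its $s$-value is constantly $2$ if $a$ is odd and constantly $0$ if $a$ is even. So the Laver tree of all $\tau$ with $\tau(1)$ odd and $\tau(\tau(1)+2)$ even has no branch in $C$.

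Even when exactly one tree per index is illfounded, forcing a branch into $C$ means dominating chosen paths through the illfounded trees for all $i\le f(0)$ at once. That is a $\bSigma{1}{1}$ choice, not a finite search, and it is exactly why Proposition~\ref{prop:delta11} needs finitary $\bSigma{1}{1}$-$\mathsf{AC}_0$. Your fallback cannot work either. A Hechler tree inside $D$ is already a Laver tree inside $D$, so you can never refute the Laver alternatives for both $D$ and its complement.

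The missing idea is to run the dichotomy the other way, as the paper does. The asymmetry you noticed should be used against Hechler trees, not Laver trees. Fix $X\in[\N]^{\N}$ and let $f=p_X$. The map $n\mapsto s_{f(0)}(\sigma_n(f))$ has finite range, so some blocks $\sigma_n(f),\sigma_m(f)$ with $n<m$ agree. Splicing $f(0)$, $\sigma_n(f)$, $\sigma_m(f)$ and the rest of $f$ gives an element of $[X]^{\N}\cap C$. Together with Lemma~\ref{lem:containsXo}, this shows no Hechler tree is disjoint from $C$, so $\hld$ yields a Laver tree $S$ with $[S]\subseteq C$.

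No homogeneous set is needed, because the decoding only uses that nodes of $S$ have arbitrarily large successors. For each $i$, take the least $\sigma_i\in S$ with $\sigma_i(0)\ge i$ whose tail is a first $\sigma_i(0)$-critical block. Put $i\in Z$ iff that block's $s$-value at $i$ is $1$. Suppose $g\in[T^0_i]$ but $i\notin Z$. Extend $\sigma_i$ inside $S$ by values dominating $g$ until the next critical block $\sigma'$ closes. Then $\leftof{T^0_i}{\sigma'}\neq\emptyset$ forces $s$-value $1$ at $i$. Hence every branch of $S$ through this node lies outside $C$, a contradiction.
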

	
	\begin{proof}
		We reason in $\aca$, using Lemma \ref{lem:bootstrap}.
		
		By Proposition \ref{prop:atrwithtrees}, it suffices to show that given a double sequence of trees $(T^0_k, T^1_k)_{k \in \mathbb{N}}$ such that for every $k$, at most one of $T^0_k$ and $T^1_k$ is illfounded, there is some set $Z$ such that for every $i$, if $T^0_i$ is illfounded, then $i \in Z$, and if $T^1_i$ is illfounded, then $i \notin Z$.
		
		In the remainder of the proof, given any $f \in \mathbb{N}^{\mathbb{N}}$ and any $k \in \mathbb{N}$, we denote by $f^k$ the function given by $f^k(i)=f(k+i)$. Moreover, given any tree $T$ and any string $\sigma \in \mathbb{N}^{<\mathbb{N}}$, we write $\leftof{T}{\sigma}$ to denote the set $\{\tau \in T : |\tau|=|\sigma| \land \tau \leq \sigma\}$ where $\tau \leq \sigma$ is a shorthand for $\forall i < |\tau| \,\, \tau(i) \leq \sigma(i)$. Notice that if $\leftof{T}{\sigma} = \emptyset$, then $\leftof{T}{\rho} = \emptyset$ for every $\rho \succeq \sigma$.
		
		From our assumptions on the sequence $(T^0_k, T^1_k)$, by weak K\H{o}nig's Lemma we have that for every $f \in \mathbb{N}^{\mathbb{N}}$ there is some $\sigma \prec f$ such that at least one between $\leftof{T^0_k}{\sigma}$ and $\leftof{T^1_k}{\sigma}$ is empty.
		Given $m$ we say that $\sigma$ is a \emph{$m$-critical sequence} if for every $i \leq m$, at least one between $\leftof{T^0_i}{\sigma}$ and $\leftof{T^1_i}{\sigma}$ is empty.
		Given any $f$ and any $m$, we define the \emph{first $m$-critical sequence} of $f$, denoted $c(m,f)$ as the least prefix $\sigma$ of $f$ which is $m$-critical. 
		Recursively, we define sequences $f_n$ and $(\sigma_i(f))_{i \in \mathbb{N}}$ as $f_0=f^1$, and, for every $n$, $\sigma_n(f)=c(f(0),f_n)$ and $f_{n+1}=f_n^{|\sigma_{n}(f)|}$. It is not hard to see that $f=f(0)^\smallfrown g$, where $g$ is the concatenation of the $\sigma_i(f)$'s. Moreover, each $\sigma_{n}(f)$ is the first $f(0)$-critical sequence of $f_n$.
		
		Given $m$, we can define a function $s_m$ which maps $m$-critical sequences $\sigma$ to $s_m(\sigma) \in 3^{m+1}$, as follows: for every $k \leq m$,
		\begin{equation*}
			s_m(\sigma)(k)=
			\begin{cases}
				0 &\mbox{if } \leftof{T^0_k}{\sigma} = \emptyset \land \leftof{T^1_k}{\sigma} = \emptyset \\
				1 &\mbox{if } \leftof{T^0_k}{\sigma} \neq \emptyset \land \leftof{T^1_k}{\sigma} = \emptyset \\
				2 &\mbox{if } \leftof{T^0_k}{\sigma} = \emptyset \land \leftof{T^1_k}{\sigma} \neq \emptyset
			\end{cases}
		\end{equation*}
		
		We can now define a $\bDelta{0}{1}$ set $C=\{f \in \mathbb{N}^{\mathbb{N}} : s_{f(0)}(\sigma_0(f))=s_{f(0)}(\sigma_1(f))\}$.
		
		The key step here is that for every set $X \in [\N]^{\N}$, $[X]^{\N} \cap C \neq \emptyset$. This is because, using $f$ in place of $p_X$, and thinking of $f$ as $\concat{f(0)}{g}$ as above, the function $n \mapsto s_{f(0)}(\sigma_n(f))$ maps $\N$ to $3^{f(0)}$, so there must be $n < m$ such that $s_{f(0)}(\sigma_n(f))=s_{f(0)}(\sigma_m(f))$. Now the sequence $f'=\concat{f(0)}{\concat{{\sigma_n(f)}}{\concat{{\sigma_m(f)}}{g'}}}$, where $g'$ is the tail of $f$ after the last entry of $\sigma_m(f)$, is in $[X]^{\N}$, and by definition $f' \in C$. Now Lemma \ref{lem:containsXo} immediately implies that there is no Hechler tree $T$ with $[T] \cap C = \emptyset$.
		
		By $\hld$, there must be some Laver tree $S$ such that $[S] \subseteq C$. We use $S$ to define a separating set $Z$, as follows. For a given $i \in \N$, let $\sigma_i \in S$ be the sequence with least code such that $\sigma_i(0) \geq i$ and $\sigma_i^-$, defined as $\sigma_i^-(n)=\sigma_i(n+1)$ for every $n < |\sigma_i|-1$, is a $\sigma_i(0)$-critical sequence. Let $i \in Z$ if and only if $s_{\sigma_i(0)}(\sigma_i^-)(i)=1$. We claim that $Z$ is a separating set for the sequence $(T^0_k, T^1_k)_{k \in \N}$. Indeed, suppose $T^0_i$ is illfounded, and let $g \in [T^0_i]$. Suppose by contradiction that $i \notin Z$, so that $s_{\sigma_i(0)}(\sigma_i^-)(i) \neq 1$. Now let $\sigma'$ be such that $\concat{{\sigma_i}}{\sigma'} \in S$ and $\sigma'(n) \geq g(n)$ for every $n < |\sigma'|$ (such $\sigma'$ exists because $S$ is a Laver tree). We have $\leftof{T^0_i}{\sigma'} \neq \emptyset$, so that $s_{\sigma_i(0)}(\sigma')(i) = 1$. Therefore, given any branch $f \in [S]$ extending $\concat{{\sigma_i}}{\sigma'}$, we get $f \notin C$. This is a contradiction. The argument to show that if $T^1_i$ is illfounded, then $i \notin Z$ is symmetrical.
	\end{proof}

	We now give the best known lower bounds for the strengths of $\lld$ and $\lls$, which involve finitary $\bSigma{1}{1}$-$\mathsf{AC}_0$. 
	We start with the lower bound to for the strength of $\lld$, the proof of which is based on modification of the proof of \ref{lem:hldatr}.
	
	\begin{proposition}\label{prop:delta11}
		$\rca +$ finitary $\bSigma{1}{1}$-$\mathsf{AC}_0 + \lld$ proves $\bDelta{1}{1}\text{-}\mathsf{CA}_0$.
	\end{proposition}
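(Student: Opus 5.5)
The plan is to run the proof of Lemma \ref{lem:hldatr} again, with the same clopen set $C$. The only new ingredient is a different argument that no Laver tree is disjoint from $C$. In Lemma \ref{lem:hldatr} this used the Hechler hypothesis through Lemma \ref{lem:containsXo}. Here we replace it with finitary $\bSigma{1}{1}$-$\mathsf{AC}_0$ together with the assumption that exactly one tree in each pair is illfounded.

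First, by Lemma \ref{lem:bootstrap} we may work in $\aca$. Then Proposition \ref{prop:doowithtrees} applies, so fix $(T^0_i,T^1_i)_{i\in\N}$ with exactly one of $T^0_i,T^1_i$ illfounded for each $i$. Define $\leftof{T}{\sigma}$, $m$-critical sequences, $c(m,f)$, the blocks $\sigma_n(f)$, the signatures $s_m$ and the set
\[C=\{f : s_{f(0)}(\sigma_0(f))=s_{f(0)}(\sigma_1(f))\}\]
exactly as in Lemma \ref{lem:hldatr}. Since at most one tree per pair is illfounded, the weak K\H{o}nig argument there still shows that every $f$ eventually reaches an $m$-critical prefix, so $C$ is $\bDelta{0}{1}$.

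The key step is to show that $[S]\cap C\neq\emptyset$ for every Laver tree $S$.
\begin{enumerate}
\item Let $m$ be least with $\langle m\rangle\in S$. Apply finitary $\bSigma{1}{1}$-$\mathsf{AC}_0$ to the $\bSigma{1}{1}$ formula $\exists g\,(g\in[T^0_k]\lor g\in[T^1_k])$, which holds for every $k$. This yields $g_0,\dots,g_m$ with each $g_k$ a branch of the illfounded member of the $k$-th pair.
\item Put $h(j)=\max_{k\le m} g_k(j)$.
\item Build a branch $f\in[S]$ extending $\langle m\rangle$ by recursion in $S\oplus h$. At each step we are at some position $j$ inside the current block, and we choose the least successor in $S$ whose value is $\ge h(j)$; such a successor exists because $S$ is Laver. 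As soon as the current block becomes $m$-critical, we restart the index at $j=0$.
\end{enumerate}
Every block $\sigma$ produced this way dominates $h$ coordinatewise from its start. Hence $\leftof{T^e_k}{\sigma}\ni g_k\upto|\sigma|$ is nonempty whenever $T^e_k$ is the illfounded tree of the pair. Criticality then forces the other one to be empty. So $s_m(\sigma)(k)$ equals $1$ if $T^0_k$ is illfounded and $2$ otherwise. In particular this value does not depend on the block, so $s_m(\sigma_0(f))=s_m(\sigma_1(f))$ and $f\in C$.

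Each block does terminate: the wellfounded tree of each pair has no branch below any fixed bound, and König's lemma for finitely branching trees is available in $\aca$. Thus $\lld$ cannot return a Laver tree disjoint from $C$, and it must give a Laver tree $S$ with $[S]\subseteq C$.

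To finish, define $Z$ from $S$ exactly as in Lemma \ref{lem:hldatr}: take $\sigma_i\in S$ of least code with $\sigma_i(0)\ge i$ and $\sigma_i^-$ a $\sigma_i(0)$-critical sequence, and put $i\in Z$ iff $s_{\sigma_i(0)}(\sigma_i^-)(i)=1$. This set exists by $\bDelta{0}{1}$-comprehension relative to $S$. The verification there already shows that $T^0_i$ illfounded implies $i\in Z$, and $T^1_i$ illfounded implies $i\notin Z$. Since exactly one of the two is illfounded, this gives $i\in Z\leftrightarrow[T^0_i]\neq\emptyset$, which is the instance of $\bDelta{1}{1}$-$\mathsf{CA}_0$ required by Proposition \ref{prop:doowithtrees}.

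I expect the main obstacle to be the middle step: building the branch with the block boundaries determined on the fly, and checking that each block really dominates all the chosen witnesses $g_k$ for $k\le m$. Only finitely many witnesses are needed at once, because $f(0)=m$ fixes which indices matter. This is exactly why finitary rather than full $\bSigma{1}{1}$ choice suffices.
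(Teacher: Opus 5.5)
Your proposal is correct and is essentially the paper's proof: you use the same clopen set $C$ from Lemma \ref{lem:hldatr}, apply finitary $\bSigma{1}{1}$-$\mathsf{AC}_0$ to get witnesses $g_0,\dots,g_m$ (with $m$ least such that $\langle m\rangle$ is in the Laver tree), take a branch dominating their maximum so that $s_m$ agrees on the first two blocks, and then define $Z$ as before. Restarting the domination at the start of each block is actually a little more careful than the paper, which only requires $f^1(k)\ge\max_{i\le m}g_i(k)$ for all $k$; that condition controls the second block only if the maximum is first made nondecreasing.
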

	\begin{proof}
		By Lemma \ref{lem:bootstrap} we can again also use $\aca$, and we can exploit Proposition \ref{prop:doowithtrees}.
		
		We consider a double sequence of trees $(T^0_i, T^1_i)$ such that for every $i$, exactly one of $T^0_i$ and $T^1_i$ is illfounded. We define the clopen set $C$ as in the proof of Lemma \ref{lem:hldatr}.
		
		We now show that no Laver tree is disjoint from $C$. Let $T$ be a Laver tree and let $m$ be least such that $\langle m \rangle \in T$. By finitary $\bSigma{1}{1}$-$\mathsf{AC}_0$, let $(g_i)_{i \leq m}$ be such that $g_i \in [T^0_i \cup T^1_i]$ for every $i \leq m$. Since $T$ is a Laver tree, there is some $f \in [T]$ such that $f(0)=m$ and $f^1(k) \geq \max_{i \leq m}\{g_i(k)\}$ for every $k \in \N$. For every $i \leq m$ only one of $[T^0_i]$ and $[T^1_i]$ is illfounded, and $f^1$ majorizes one element in each of the closed sets $([T^0_i \cup T^1_i])_{i \leq m}$. In the terminology of the proof of Lemma \ref{lem:hldatr}, the function $s_m$ is constant on the $m$-critical sequences that make up $f^1$. Hence $f \in C$, and $[T] \cap C \neq \emptyset$.
		
		By $\lld$, it follows that there is some Laver tree $T$ such that $[T] \subseteq C$. We define $Z$ exactly as in the proof of Lemma \ref{lem:hldatr}: by the hypothesis on our double sequence we must have $Z=\{i \in \N: [T^0_i] \neq \emptyset\}$.
	\end{proof}
	
	The Laver partition theorem for open sets is at least as strong as $\bSigma{1}{1}$-$\mathsf{AC}_0$.
	
	\begin{proposition}\label{prop:sigma11}
		$\rca +$ finitary $\bSigma{1}{1}$-$\mathsf{AC}_0 + \lls$ proves $\bSigma{1}{1}\text{-}\mathsf{AC}_0$.
	\end{proposition}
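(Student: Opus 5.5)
The plan is to follow the template of Proposition \ref{prop:delta11}, but with an open set whose complement is a closed ``domination'' condition instead of a clopen colouring. By Lemma \ref{lem:bootstrap} we may work in $\aca$. By Proposition \ref{prop:soowithtrees} it then suffices to show the following: given a sequence $(T_i)_{i \in \N}$ of illfounded trees, there is a sequence $(g_i)_{i\in\N}$ with $g_i \in [T_i]$ for every $i$. Throughout I use the notation $f^1$ and $\leftof{T}{\sigma}$ from the proof of Lemma \ref{lem:hldatr}.

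First I define the closed set
\[F=\{f \in \baire : \forall i \leq f(0) \,\, \forall n \,\, \leftof{T_i}{f^1 \upto n} \neq \emptyset\}.\]
This is $\bPi{0}{1}$ relative to the sequence, so its complement $U=\baire\setminus F$ has an open code, obtained by enumerating the strings $\sigma$ such that $\leftof{T_i}{(\sigma^-)\upto n}=\emptyset$ for some $i \leq \sigma(0)$ and some $n<|\sigma^-|$.

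Next I show that no Laver tree $T$ satisfies $[T]\subseteq U$.
\begin{itemize}
\item Let $m$ be least with $\langle m\rangle \in T$.
\item By finitary $\bSigma{1}{1}$-$\mathsf{AC}_0$, pick $(g_i)_{i\le m}$ with $g_i\in[T_i]$.
\item Since $T$ is Laver, build recursively (in $\rca$, from $T$ and the $g_i$) a branch $f\in[T]$ with $f(0)=m$ and $f^1(k)\ge \max_{i\le m} g_i(k)$ for all $k$.
\item Then for each $i \le m$ and each $n$, the string $g_i\upto n$ witnesses $\leftof{T_i}{f^1\upto n}\neq\emptyset$. Hence $f\in F$, so $[T]\not\subseteq U$.
\end{itemize}
Applying $\lls$ to $U$ therefore yields a Laver tree $S$ with $[S]\cap U=\emptyset$, i.e.\ $[S]\subseteq F$.

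Finally I extract the paths uniformly from $S$.
\begin{itemize}
\item For each $i$, let $n_i$ be least with $n_i \geq i$ and $\langle n_i\rangle\in S$.
\item Let $f_i$ be the leftmost branch of $S$ through $\langle n_i\rangle$. This is computable uniformly in $i$ from $S$, since every node of $S$ has a successor.
\item Since $f_i\in F$ and $i\le f_i(0)$, the tree $B_i=\{\tau\in T_i : \forall k<|\tau| \,\, \tau(k)\le f_i^1(k)\}$ is infinite, as it has a node at every level, and it is bounded by $f_i^1$.
\item In $\aca$ the leftmost path of a bounded infinite tree is arithmetically definable, uniformly in the tree and the bound. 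So arithmetical comprehension gives the sequence $(g_i)_{i\in\N}$ of leftmost paths $g_i\in[B_i]\subseteq[T_i]$.
\end{itemize}

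I expect the main subtlety to be this last uniformity step. Bounded K\"onig's lemma has to be applied simultaneously to all $i$, not tree by tree. I plan to handle it by defining the leftmost paths directly by a single arithmetical formula in the parameters $S$ and $(T_i)_i$: the value $g_i(k)$ is the least $j$ extending the previously chosen values such that the subtree of $B_i$ above that node is infinite.

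A secondary point to check carefully is the coding of $U$ as an open set. The $\bPi{0}{1}$ condition defining $F$ must be witnessed by a finite prefix of $f$ that also fixes $f(0)$; this holds because $f(0)$ is the first entry of any nonempty prefix.
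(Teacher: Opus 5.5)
Your proposal is correct and follows essentially the same route as the paper. Both work in $\aca$ and reduce via Proposition~\ref{prop:soowithtrees}. Both define a closed ``domination'' set indexed by $f(0)$ and use finitary $\bSigma{1}{1}$-$\mathsf{AC}_0$ to see that every Laver tree meets it. Both then apply $\lls$ and extract bounds uniformly from the resulting Laver tree before invoking bounded K\H{o}nig's lemma. Your closed set is a mild simplification of the paper's, which adds the antecedent $\leftof{T_i}{f^2\upto f(1)}\neq\emptyset$ and works with the smooth closures $T_i^+$, and therefore needs a slightly more elaborate search for the branches $h_i$; the argument is otherwise the same in substance.
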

	\begin{proof}
		Again, by Lemma \ref{lem:bootstrap} we can use $\aca$. In particular, we have access to K\H{o}nig's Lemma and to Proposition \ref{prop:soowithtrees}.
		
		Let $(T_i)_{i \in \N}$ be a sequence of illfounded trees. We define an auxiliary sequence $(T^+_i)_{i \in \N}$ as follows: $T^+_i=\{\sigma \in \N^{<\N} : \exists \tau \in \N^{|\sigma|} \cap T_i \,\, \tau \leq \sigma\}$. For every $i$, the tree $T^+_i$ is the least \emph{smooth} tree containing $T_i$. It is known that $\wkl$ shows that $f \in [T^+_i]$ if and only if there is some $g \in [T_i]$ with $g \leq f$ (see \cite[Exercise VI.1.8]{simpson}).
		
		Now we define the set \[C=\{f \in \N^{\N} : \forall i \leq f(0) \,\, (\leftof{T_i}{f^2 \upto f(1)} \neq \emptyset \rightarrow  \exists g \in [T_i]\,\, g \leq f^2)\}.\] It is not hard to see (using K\H{o}nig's Lemma) that $C$ is closed.
		
		Now let $T$ be any Laver tree, let $m$ be the least such that $\langle m \rangle \in T$, and by finitary $\bSigma{1}{1}$-$\mathsf{AC}_0$ let $(g_i)_{i \leq m}$ be such that $g_i \in [T^+_i]$ for every $i \leq m$. Since $T$ is a Laver tree, there is some $f \in [T]$ with $f(0)=m$ and $f^2(k) \geq \max_{i \leq m}\{g_i(k)\}$ for every $k$. Since $f^2$ majorizes an element of $[T_i]$ for every $i \leq m$, it follows immediately that $f \in C$, hence $[T] \cap C \neq \emptyset$.
		
		Hence, $\lls$ guarantees that there is a Laver tree $T$ with $[T] \subseteq C$. 
		
		Given any $i$, we look for the least $n_0 \geq i$ such that $\langle n_0 \rangle \in T$ and the least $n_1$ such that $\langle n_0, n_1 \rangle \in T$. Then we look for the string $\sigma$ with least index such that $\concat{\langle n_0, n_1 \rangle}{\sigma} \in T$, $|\sigma| = n_1$ and $\leftof{T_i}{\sigma} \neq \emptyset$. We then set $h_i$ to be the leftmost path in $T$ extending $\concat{\langle n_0, n_1 \rangle}{\sigma}$. Note that, since $h_i^2 \upto h_i(1)=\sigma$, $\leftof{T_i}{h_i^2 \upto h_i(1)} \neq \emptyset$. Since $h_i \in [T] \subseteq C$, there must be some $g_i \in [T_i]$ with $g_i \leq h_i$.

        Given the sequence $(h_i)_{i \in \N}$, $\wkl$ suffices to prove the existence of a sequence of infinite paths $(g_i)_{i \in \N}$ with $g_i \in [T_i]$ for every $i$, as in the statement of $\bSigma{1}{1}$-$\mathsf{AC}_0$.
	\end{proof}

	\section{The Weihrauch degrees related to the Laver partition theorem}\label{sec:Wd}
	
	In this section we compare the Weihrauch degrees of functions naturally associated to the Laver partition theorem with common benchmarks around the level of $\atr$. Following the approach of \cite{marconevalenti} we can define several functions associated to the Laver partition theorem.
	
	\begin{definition}\label{def:basic}
		For any $\Gamma \in \{\bSigma{0}{1}, \bPi{0}{1}, \bDelta{0}{1}\}$, we define the following:
		\begin{itemize}
			\item $\mathsf{wFindL}_{\Gamma} \subc \Gamma(\baire) \multif \baire$ mapping $A$ to $\{f : [\LL(f)] \subseteq A\}$, with \[\dom(\mathsf{wFindL}_{\Gamma}) = \{A \in \Gamma(\baire): \forall T \,\, (T \text{ is a Laver tree} \rightarrow \exists g \in [T] \,\, g \in A)\},\]
			\item $\mathsf{FindL}_{\Gamma} \subc \Gamma(\baire) \multif \baire$ mapping $A$ to $\{f : [\LL(f)] \subseteq A\}$, with \[\dom(\mathsf{FindL}_{\Gamma}) = \{A \in \Gamma(\baire): \exists T \,\, (T \text{ is a Laver tree} \land [T] \subseteq A)\}.\]
		\end{itemize}
		Moreover, we use the symbols $\lls$ and $\lld$ for the functions most naturally associated to, respectively, the open and clopen Laver partition theorem. Namely, we let:
		
		\begin{itemize}
			\item $\lld \colon \bDelta{0}{1}(\baire) \multif \baire$ mapping $A$ to $\{f : [\LL(f)] \subseteq A \lor [\LL(f)] \cap A = \emptyset\}$, and
			\item $\lls \colon \bSigma{0}{1}(\baire) \multif \baire$ mapping $A$ to $\{f : [\LL(f)] \subseteq A \lor [\LL(f)] \cap A = \emptyset\}$.
		\end{itemize}	
		\end{definition}
		
		\begin{remark}\label{rem:nolaverhechler}
			     Note that if a tree $V$ codes a closed set $A$, then $A \in \dom(\lindp)$ exactly when there exists a Laver tree $S$ with $S \subseteq V$; moreover, by $\hlp$, $A \in \dom(\wlindp)$ exactly when
		there exists a Hechler tree $S$ with $S \subseteq V$.

		\end{remark}

        We will show that $\lls$ follows a recurring theme in the Weihrauch analysis of principles around $\atr$: $\lindp$ is equivalent to $\CBaire$, while $\wlinds$ is equivalent to $\UCBaire$. This pattern occurs also in the analysis of the open Ramsey theorem, the perfect tree theorem, and open determinacy (see \cite{marconevalenti} and \cite{kmp}). This phenomenon corresponds to the template of the proofs of these results in $\atr$, where one attempts to build a suitable hierarchy: a success (obtained using $\UCBaire$) yields to a solution to the open case, whereas a failure of the attempt leads to a pseudohierarchy (chosen using $\CBaire$), which in turn produces a solution to the closed case.
	
		\subsection{The one-sided principles}
		
		We now characterize the (arithmetical) Weihrauch degrees of the one-sided principles.

	\begin{theorem}\label{thm:ucbaire}
		The following functions are strongly Weihrauch equivalent to $\UCBaire$:
		\begin{itemize}
			\item $\wlindd$,
			\item $\wlinds$,
			\item $\lld$,
			\item $\lindd$,
			\item $\linds$.
		\end{itemize}
	\end{theorem}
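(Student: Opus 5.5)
The plan is to prove a chain of easy reductions among the five problems, then one upper bound to $\UCBaire$ at the top of the chain and one lower bound from $\UCBaire$ at the bottom.

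\emph{Trivial reductions.} I would first record the following strong reductions, all given by identity maps:
\[\wlindd \lews \lindd \lews \linds, \qquad \wlindd \lews \wlinds \lews \linds, \qquad \wlindd \lews \lld.\]
\begin{itemize}
\item A clopen code is in particular an open code.
\item $\dom(\mathsf{wFindL}_\Gamma)\subseteq\dom(\mathsf{FindL}_\Gamma)$. Indeed, if every Laver tree meets $A$, then no Laver tree is disjoint from $A$. By Theorem \ref{thm:hl} (the $\hls$ half) there is then a Hechler tree inside $A$, hence a Laver tree inside $A$.
\item On $\dom(\wlindd)$ no Laver tree is disjoint from $A$, so every $\lld$-solution is contained in $A$.
\end{itemize}
It therefore suffices to prove $\linds \lews \UCBaire$, $\lld \lews \UCBaire$, and $\UCBaire \lews \wlindd$.

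\emph{Upper bounds.} By Proposition \ref{prop:ucbequiv} I may use $\atrw$ in place of $\UCBaire$; since $\UCBaire$ is a cylinder, the original input can be carried along.

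For $\linds$: given $A$, compute the tree $f(A)$ of Corollary \ref{cor:openbound}, relativized to $A$. On $\dom(\linds)$ this tree is wellfounded, so its Kleene--Brouwer order is a wellorder of type at least $\height(f(A))$. I would iterate along it (times $2$) the $\lPi{0}{2}$ ranking operator of the proof of Theorem \ref{thm:hl}. The quantitative bound guarantees that $\langle\rangle$ receives a rank within this hierarchy. From the hierarchy, the tree of rank-descending sequences is computable, and it is a Laver tree inside $\open{A}$; a final computable step outputs its $\LL$-code.

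For $\lld$: given $(A_0,A_1)$, run the same hierarchy along the Kleene--Brouwer order of $T_{A_0}\oplus T_{A_1}$, computing the rankings $R(A_0)$ and $R(A_1)$ simultaneously. The clopen corollary after Corollary \ref{cor:openbound} guarantees that $\langle\rangle$ is ranked for some $i$. We search for such an $i$, which is a finite search once the hierarchy is in hand, and output the corresponding rank-descending tree.

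\emph{Lower bound, and the main obstacle.} I would use $\bDelta{1}{1}$-$\mathsf{CA}\equiv_{\sW}\UCBaire$. Given a double sequence $(T^0_i,T^1_i)$ with exactly one tree of each pair illfounded, build the clopen set $C$ of Lemma \ref{lem:hldatr}; it is uniformly computable from the input. The argument of Proposition \ref{prop:delta11} shows that in $\omega$ every Laver tree meets $C$; full choice is available in the real world, so this holds outright. Hence $C\in\dom(\wlindd)$. Moreover, from any Laver tree $S$ with $[S]\subseteq C$ the set $Z$ is computed exactly as in that proof. This already gives $\UCBaire\lew\wlindd$.

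The obstacle is making the reduction \emph{strong}. The backward map computing $Z$ uses the sets $\leftof{T^k_i}{\sigma}$, i.e.\ the original input, and $\wlindd$ is not known to be a cylinder. I would first try to modify $C$ so that the input trees can be read off any Laver tree contained in it. Concretely: let $f\in C'$ iff either $f(0)$ does not code a correct prefix of the input of length at least $f(1)$, or $f^2\in C$ (shifting $C$'s coordinates), and require witnesses to be read from nodes where the prefix is correct.

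The difficulty is that a Laver tree might avoid correct codes altogether. I therefore expect to have to interleave the input into the values of $f$ in a way every Laver tree is forced to visit. One option is to reserve infinitely many "correct" successors at each node and make incorrect successors lead outside $C'$ after one further step. I would then verify that the dichotomy "no Laver tree is disjoint from $C'$" survives this change. If this fails, the fallback is to show that $\wlindd$ is a cylinder by this same kind of padding.
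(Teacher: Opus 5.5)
Your reductions among the five problems and your two upper bounds follow the paper's route: the paper also obtains $F\lews\atrw$ by iterating the ranking operator of Theorem \ref{thm:hl} along a Kleene--Brouwer order supplied by Corollary \ref{cor:openbound}. The genuine gap is the one you flag yourself, the strong lower bound $\UCBaire\lews\wlindd$. It cannot be closed by padding, nor by showing that $\wlindd$ is a cylinder, because in fact $\mathsf{id}_{\baire}\nleq_{\sW}\wlindd$. To see this, let $A\in\dom(\wlindd)$ and $X\in\ramsey$.
Clopen Galvin--Prikry below $X$ gives $Z\in[X]^{\omega}$ with $[Z]^{\omega}\subseteq A$ or $[Z]^{\omega}\cap A=\emptyset$. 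The second alternative is excluded, since $[Z]^{<\omega}$ is a Laver tree and every Laver tree meets $A$. Hence $[Z]^{<\omega}$ is a solution of $\wlindd(A)$, and its $\LL$-name is uniformly computable from $Z$. Now suppose $(\varphi,\psi)$ witnessed $\mathsf{id}_{\baire}\lews\wlindd$. Then for every $x$, every $X\in\ramsey$ would have an infinite subset $Z$ with $x\leq_{\T}Z$. By Theorem \ref{thm:recenc}, exactly as in Lemma \ref{lem:nocylinder}, every $x$ would be hyperarithmetic, which is absurd. Since $\mathsf{id}_{\baire}\lews\UCBaire$ (send $x$ to the tree of its initial segments), it follows that $\UCBaire\nleq_{\sW}\wlindd$. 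The same argument works for $\wlinds$, using open Galvin--Prikry, and for $\lld$, where both alternatives yield a solution. Your concrete plan breaks on this point. If incorrect successors lead out of $C'$, then a Laver tree using only incorrect successors is disjoint from $C'$, so $C'\notin\dom(\wlindd)$. The domain condition of $\mathsf{wFindL}$ forces every $[Z]^{<\omega}$ to meet the set, and Galvin--Prikry then produces solutions computable from subsets of arbitrary infinite sets.

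So for $\wlindd$, $\wlinds$ and $\lld$ your argument proves what is actually true, namely $F\lews\UCBaire$ and $\UCBaire\lew F$; the strong equivalence claimed in the statement fails for these three. The paper's proof does not notice this. It says that Proposition \ref{prop:delta11} ``essentially'' gives $\bDelta{1}{1}$-$\mathsf{CA}\lews\wlindd$, but the computation of $Z$ in Lemma \ref{lem:hldatr} reads the sets $\leftof{T^k_i}{\sigma}$ off the input, which is exactly the defect you spotted. Your padding idea does work for $\lindd$ (hence $\linds$), whose domain only requires one Laver tree inside the set. Let $p$ be the input and $C$ as in Lemma \ref{lem:hldatr}, and put $f\in A$ iff $(f(1))_0$ codes $p\upharpoonright f(0)$ and $f^2\in C$, where $f^2(k)=f(k+2)$. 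Grafting a Laver tree with body inside $C$ (one exists by Theorem \ref{thm:hl}, since $C\in\dom(\wlindd)$) above every $\langle n,m\rangle$ with correct $(m)_0$ gives a Laver tree inside $A$. Conversely, any Laver tree with body inside $A$ computes $p$ (read $(m)_0$ at nodes $\langle n,m\rangle$ with $n$ large) and also a Laver tree with body inside $C$, hence $Z$. Among the five problems, the strong equivalence with $\UCBaire$ therefore holds only for $\lindd$ and $\linds$.
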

	\begin{proof}
		We exploit the equivalences $\bDelta{1}{1}$-$\mathsf{CA} \equiv_{\sW} \mathsf{ATR} \equiv_{\sW} \UCBaire$ (see Proposition \ref{prop:ucbequiv}).
		
		Proposition \ref{prop:delta11} shows, essentially, that $\bDelta{1}{1}$-$\mathsf{CA} \lews \wlindd$. Note that all the other functions of our list are clearly $\geq_{\sW} \wlindd$.
		
		Now let $F$ be any of the five functions. We claim that $F \lews \bDelta{1}{1}$-$\mathsf{CA}$. Since the proofs corresponding to the five different functions are almost exactly the same (there are only tiny differences in the way instances are coded) we provide a high level proof sketch encompassing all the cases at once. 
		
		Note that by Corollary \ref{cor:openbound} there is a computable function $T$ such that, for every $f \in \dom(F)$, if $\alpha=|T(f)|$, then $F(f)$ has a member computable in $0^{(2 \cdot \alpha)}$. Moreover, in our context, this ordinal $2 \cdot \alpha$ is an upper bound on the rank of $\langle \rangle$ in the hierarchy $(R_{\alpha})_{\alpha \in \ock}$ based on the open set $\open{A}$ coded within $f$. It follows that one can obtain an element of $F(f)$ (uniformly in $f$) just by iterating a suitable arithmetical operator along the wellorder $\mathsf{KB}(T(f)) + \omega$. This implies that $F \lews \atrw$.
	\end{proof}
	
	We now show that the one-sided principles for closed sets are close to $\CBaire$.
	
	\begin{proposition}\label{prop:closedsol}
		$\lindp \equiv_{\sW} \CBaire$
	\end{proposition}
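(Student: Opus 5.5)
We prove the two strong reductions separately.

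\emph{Upper bound, $\lindp \lews \CBaire$.} Let $V$ be a tree coding a closed set $\baire\setminus\open{A}$, with $A$ in the domain of $\lindp$. By Remark \ref{rem:nolaverhechler}, $A$ is in the domain exactly when some Laver tree $S$ satisfies $S\subseteq V$. We use the representation $\LL$: every $g\in\baire$ codes a Laver tree $\LL(g)=\bigcup_n T(J(g)\upto n)$, and the finite approximations $T(J(g)\upto n)$ are computable uniformly in $g\upto n$. So we can computably build from $V$ the tree
\[
W_V=\{\sigma\in\omega^{<\omega} : T(J(\sigma))\subseteq V\},
\]
interpreting $J$ on finite strings via $j$. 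This is a tree because the sets $T(J(\sigma\upto k))$ increase with $k$. A path $g$ lies in $[W_V]$ iff every finite approximation is contained in $V$, i.e.\ iff $\LL(g)\subseteq V$. Every Laver tree $S\subseteq V$ is $\LL(g)$ for some $g$, since $\LL$ is a bijection. Hence $[W_V]\neq\emptyset$ on the domain, and any $g\in[W_V]$ is a name of a solution. Since $\LL(g)\subseteq V$ gives $[\LL(g)]\subseteq[V]$, no postprocessing is needed, so the reduction is strong.

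\emph{Lower bound, $\CBaire \lews \lindp$.} We can use $\CBaire$ being a cylinder (Proposition \ref{prop:summary}). Given an illfounded tree $R$, we build computably a closed set $P_R$ (via a tree) that contains a Laver tree exactly because $R$ has a path, and from any Laver tree in $P_R$ we recover a path of $R$ without seeing $R$. The natural choice is to encode a path $x$ in the ``least-choice'' coordinates, with free infinite splitting elsewhere. Concretely, let $P_R$ consist of those $f$ such that for all $n$ the string $\langle \lfloor f(0)/?\rfloor\ldots\rangle$\ldots; a cleaner version is as follows. Identify $\omega$ with $\omega\times\omega$ via pairing, and let
\[
V_R=\{\sigma : \forall k<|\sigma|\ \ \langle \pi_0(\sigma(0)),\dots,\pi_0(\sigma(k))\rangle\in R\}.
\]
If $x\in[R]$, the tree of $\sigma$ with $\pi_0(\sigma(k))=x(k)$ for all $k$ is a Laver tree inside $V_R$, since the second coordinate is free. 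Conversely, suppose $S=\LL(g)\subseteq V_R$ is Laver. Then every branch of $S$ has first coordinates forming a path of $R$. So the leftmost branch of $S$, which is computable from $g$ alone, yields $x=\pi_0\circ(\text{leftmost branch})\in[R]$. This postprocessing uses only $g$, so the reduction is strong.

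\emph{Main obstacle.} The real issue is to check that the inner computations are uniform in the name $g$ only. In particular, computing the leftmost branch of $\LL(g)$ requires knowing the least successor of each node. With the representation of Definition \ref{def:codesl} this is available: each node's successors are added in increasing order, so the first successor added is the least, and we can read it off from $J(g)$ at the stage where it appears.
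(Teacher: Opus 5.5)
Your proof is correct and follows the paper's argument. For the upper bound, your tree $j^{-1}(C_V)$ of pulled-back codes of partial Laver trees inside $V$ differs from the paper's $C_S$ only in absorbing the computable map $J^{-1}$ into the forward functional. For the lower bound, you use the same tree as the paper, in which one coordinate copies a path of $R$ and the other is free, and you also spell out why the leftmost branch is computable from the $\LL$-name. You should delete the abandoned false start about $P_R$ and the appeal to $\CBaire$ being a cylinder, since neither plays any role in the argument.
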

	\begin{proof}
		Let $S \subseteq \omega^{<\omega}$ be the code for an instance of $\lindp$. As in Definition \ref{def:codesl}, we define the set $C_S \subseteq C$, which is the set of (codes for) partial Laver trees contained in $S$. Then $C_S$ is an illfounded tree and for every $f \in [C]$ we have $f \in [C_S]$ if and only if $[\LL(J^{-1}(f))] \subseteq S$. Hence applying $\CBaire$ to $C_S$ yields a solution to the original instance $S$ for $\lindp$. Since $J^{-1}$ is computable, this is a strong Weihrauch reduction.
		
		For the other direction, given a tree $T$ which is an instance of $\CBaire$, let $T'=\{\sigma \in \omega^{<\omega} : \sigma_1 \in T\}$, where we consider the pairing function on strings obtained applying that on numbers componentwise, i.e.\ $\sigma_1(j)=(\sigma(j))_1$. Then $[T'] \in \dom(\lindp)$: indeed, if $f \in [T]$, the subtree of $T'$ given by $\{\sigma : \sigma_1 \prec f\}$ is a Laver tree. Given any Laver tree $S$ with $[S] \subseteq [T']$ and any $g \in [S]$, we have that $g_1 \in [T]$. This shows that $\CBaire \lews \lindp$.
	\end{proof}
	
	\begin{proposition}
		$\CCantor \star \wlindp \equiv_{\W} \CBaire$, so  $\wlindp \equiv^{\ari}_{\W} \CBaire$.
	\end{proposition}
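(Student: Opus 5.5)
The plan is to prove the two reductions $\CCantor \star \wlindp \lew \CBaire$ and $\CBaire \lew \CCantor \star \wlindp$ separately. The arithmetical statement then follows from Proposition \ref{prop:summary}.

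\emph{Upper bound.} First I will check that $\wlindp \lews \lindp$ via the identity reduction, by showing that $\dom(\wlindp) \subseteq \dom(\lindp)$. Let $A$ be closed and suppose every Laver tree meets $A$. Apply the Laver partition theorem for open sets (Theorem \ref{thm:hl}) to the open complement of $A$. This gives a Laver tree whose body is either contained in $A$ or disjoint from $A$. The second option is excluded by the hypothesis, so some Laver tree has body contained in $A$, and $A \in \dom(\lindp)$. By Proposition \ref{prop:closedsol} we get $\wlindp \lew \CBaire$. Since $\CCantor \lew \CBaire$ and $\CBaire$ is closed under compositional product (Proposition \ref{prop:summary}(3)), it follows that $\CCantor \star \wlindp \lew \CBaire$.

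\emph{Lower bound.} I will reuse the smooth closure from the proof of Proposition \ref{prop:sigma11}. Given an illfounded tree $T$, let $T^+=\{\sigma : \exists \tau \in T \,\, (|\tau|=|\sigma| \land \tau \leq \sigma)\}$. This tree is computable from $T$, and $[T^+]=\{g : \exists h \in [T] \,\, h \leq g\}$ by a compactness argument. The closed set $[T^+]$ is in $\dom(\wlindp)$. Indeed, fix $h \in [T]$; any Laver tree $L$ contains a branch $g \geq h$, obtained by repeatedly choosing a successor larger than the corresponding value of $h$, and this $g$ lies in $[T^+]$.

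Given a solution $S$ with $[S] \subseteq [T^+]$, I compute the leftmost branch $g$ of $S$ (this is computable from $S$, since $S$ is a Laver tree). Then $g \in [T^+]$, so the tree $T_g=\{\tau \in T : \tau \leq g \upto |\tau|\}$ is a nonempty-bodied tree that is finitely branching with computable bound $g$. It is computable from $\langle T, S\rangle$. Choice on such $g$-bounded trees reduces uniformly to $\CCantor$ by the usual unary/binary coding of bounded trees into $2^{<\omega}$. Hence $\CCantor$ produces some $h \in [T_g] \subseteq [T]$. This computation is a legitimate instance of $\CCantor \star \wlindp$, because the compositional product may pass the original input $T$ along to the second stage.

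\emph{Arithmetical equivalence and main obstacle.} Since $\CCantor \lew \lim$, we get $\CBaire \lew \lim \star \wlindp$. Combined with $\wlindp \lew \CBaire$, Proposition \ref{prop:summary}(6) gives $\wlindp \equiv^{\ari}_{\W} \CBaire$. The steps needing the most care are the domain verifications: that every Laver tree meets $[T^+]$, and the compactness argument showing $[T_g] \neq \emptyset$. The Cantor-space coding of $T_g$ is routine.
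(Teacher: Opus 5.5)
Your proposal is correct and follows essentially the same route as the paper. The upper bound goes through $\wlindp \lew \lindp \equiv_{\sW} \CBaire$ together with closure of $\CBaire$ under $\star$; you usefully make explicit the inclusion $\dom(\wlindp) \subseteq \dom(\lindp)$, which the paper leaves implicit. The lower bound is the same argument via $T^+$, the leftmost branch of the Laver solution, and $\CCantor$ applied to the resulting bounded tree, which the paper calls an elaboration of Proposition \ref{prop:sigma11} and spells out in Lemma \ref{lem:abovetcb}.
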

	\begin{proof}
		The chain of reductions $\CCantor \star \wlindp \lew \CCantor \star \lindp \lew \CBaire$ follows from Proposition \ref{prop:closedsol} together with the fact that $\CBaire$ is closed under composition (see Proposition \ref{prop:summary}). A simple elaboration on the proof of Proposition \ref{prop:sigma11} shows the reduction $\CBaire \lew \CCantor \star \wlindp$.
	\end{proof}

	\subsection{More on finding Laver trees in closed sets}
	
	We show that $\wlindp \not\equiv_{\sW} \lindp$, as the former is not a cylinder. To achieve this, we show that $\lindp$ and $\wlindp$ are equivalent to restrictions of $\CBaire$ to smaller domains, in particular $\CBaire \upto \dom(\lindp) \equiv_{\sW} \lindp$ and $\CBaire \upto \dom(\wlindp) \equiv_{\sW} \wlindp$.
	
	The main observation is the following Lemma, which mentions the set of codes for partial Laver trees $C$, the decoding function $T$ and the isomorphism $j \colon \omega^{<\omega} \rightarrow C$ from Definition \ref{def:codesl}.
	
	\begin{lemma}\label{lem:treesofcodes}
		Let $U$ be a tree in $\dom(\lindp)$, and let $C_U=\{\sigma \in C : T(\sigma) \subseteq U\}$. Then $C_U, j^{-1}(C_U) \in \dom(\lindp)$. Moreover, if $U \in \dom(\wlindp)$, then also $j^{-1}(C_U) \in \dom(\wlindp)$.
	\end{lemma}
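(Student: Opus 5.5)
The plan is to prove the three claims in turn, using only the definitions of $C$, the decoding map $T$ and $j$ from Definition \ref{def:codesl}, together with Remark \ref{rem:nolaverhechler}. That remark lets us read $U \in \dom(\lindp)$ as ``$U$ contains a Laver tree'' and $U \in \dom(\wlindp)$ as ``$U$ contains a Hechler tree''.

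\textbf{$C_U \in \dom(\lindp)$.} First note that $C_U$ is a tree: if $\tau \preceq \sigma$ then $T(\tau) \subseteq T(\sigma)$, so $C_U$ is closed under prefixes. Fix a Laver tree $L \subseteq U$ and let $C_L = \{\sigma \in C : T(\sigma) \subseteq L\} \subseteq C_U$. It suffices to show that $C_L$ is a Laver tree.

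Take $\sigma \in C_L$. The finite tree $T(\sigma)$ lies inside $L$. Each of its nodes $\tau$ has infinitely many immediate successors in $L$, so there are infinitely many admissible $j > \max\{i : \concat{\tau}{i} \in T(\sigma)\}$ with $\concat{\tau}{j} \in L$. Hence there are infinitely many sets $D$ satisfying conditions (i) and (ii) of Definition \ref{def:codesl} with $D \subseteq L$. Each such $D$ gives a successor $\concat{\sigma}{n} \in C_L$. So every node of $C_L$ has infinitely many immediate successors, and $C_L$ is a Laver tree with empty stem.

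\textbf{$j^{-1}(C_U) \in \dom(\lindp)$.} Since $j \colon \omega^{<\omega} \to C$ is a tree isomorphism, $j^{-1}(C_U)$ is a tree and $j^{-1}(C_L) \subseteq j^{-1}(C_U)$. An isomorphism preserves the property ``every node has infinitely many immediate successors'', so $j^{-1}(C_L)$ is a Laver tree.

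\textbf{$j^{-1}(C_U) \in \dom(\wlindp)$ when $U \in \dom(\wlindp)$.} This is the main obstacle, because being Hechler is not invariant under tree isomorphism. We must check that, for a Hechler tree $H \subseteq U$, the tree $j^{-1}(C_H)$ is Hechler. That is, for $\sigma \in C_H$, all but finitely many successors $\concat{\sigma}{n}$ of $\sigma$ in $C$ must lie in $C_H$.

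Each node $\tau \in T(\sigma)$ excludes only finitely many successor values in $H$. Since $T(\sigma)$ is finite, only finitely many values $j$ are forbidden overall. Say all of them are below some $N$. Then any admissible $D$ whose new entries are all $\geq N$ is contained in $H$.

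The remaining work is to relate this to the enumeration by $j$. We need that only finitely many $n$ give $j(\concat{\rho}{n})$ with some new coordinate below $N$, where $\rho = j^{-1}(\sigma)$. This requires fixing the coding of finite sets $D$ so that, for fixed $|D|$, only finitely many codes involve a value below $N$. This fails for a naive coding, since a set $D$ may have one small entry and other entries arbitrarily large.

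First I will try to exploit that $j$ preserves lexicographic order and to choose the coding of $D$ accordingly, for instance by enumerating candidate $D$ in order of their minimum new entry. If the coding is fixed and does not allow this, I will instead pass to a computable subtree. From each $\sigma$, keep only the successors $D$ whose new entries all exceed one common bound $b$, with one successor for each $b$. The resulting tree is still Laver, and it is Hechler after re-indexing via $j$. I would then replace $C_U$ by this subtree in the statement's application, noting that it still lies in $\dom(\lindp)$ by the argument of the first step.
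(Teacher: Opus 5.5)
Your first two parts are correct and coincide with the paper's argument. The paper's $S'$ is exactly your $C_L$, transported by the isomorphism $j$. Both of you implicitly read $T(\langle\rangle)=\{\langle\rangle\}$, as the definition clearly intends.

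\textbf{The third part.} The gap is here, and your suspicion is more than justified. Whether $j^{-1}(C_H)$ is Hechler at a node $\rho$ only asks whether all but finitely many admissible $D$ at $j(\rho)$ lie in $H$. This does not depend on how the $D$'s are coded or enumerated. So neither re-coding nor exploiting the order-preservation of $j$ can help. As soon as $|T(\sigma)|\geq 2$, there are infinitely many admissible $D$ with one forbidden new entry while another entry grows. This is precisely the step the paper asserts without argument (``only finitely many immediate successors of $\sigma$ in $C$ are not in $S'$''), and it is false.

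Worse, even the weaker conclusion $j^{-1}(C_U)\in\dom(\wlindp)$ fails. Let $H=\{\sigma : \sigma(i)\neq 0 \text{ for all } 1\leq i<|\sigma|\}$. This is a Hechler tree, so $H\in\dom(\wlindp)$. Define $L\subseteq C$ as follows. Keep all successors in $C$ of the root and of every node of length at least $2$. At each node $\sigma$ of length $1$, say with $T(\sigma)=\{\langle\rangle,\langle d\rangle\}$, keep only the successors coding $D=\{\langle e\rangle,\langle d,0\rangle\}$ with $e>d$. Every node of $L$ has infinitely many successors, so $j^{-1}(L)$ is a Laver tree. Yet every node $\tau$ of $L$ of length $2$ has $\langle d,0\rangle\in T(\tau)\setminus H$. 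Hence $[j^{-1}(L)]\cap[j^{-1}(C_H)]=\emptyset$ and $j^{-1}(C_H)\notin\dom(\wlindp)$. So the ``moreover'' clause cannot be proved as stated.

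\textbf{Your fallback.} It is the right kind of repair, but it proves a modified statement, and you should say so explicitly. Concretely, let $M_\sigma$ be the largest number occurring in the strings of $T(\sigma)$. Let $E\subseteq C$ be the computable subtree in which each $\sigma$ has the successors $D_b$ ($b\in\omega$), where $D_b$ gives every node of $T(\sigma)$ the new last entry $b+M_\sigma+1$. Let $k\colon\omega^{<\omega}\to E$ be a tree isomorphism.

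Suppose $H\subseteq U$ is Hechler and $\sigma\in E\cap C_H$. Then $D_b\subseteq H$ as soon as $b+M_\sigma+1$ exceeds every $x$ such that $\concat{\tau}{x}\notin H$ for some $\tau\in T(\sigma)$, and there are only finitely many such $x$. Hence $k^{-1}(E\cap C_H)$ is a Hechler tree and $k^{-1}(E\cap C_U)\in\dom(\wlindp)$. This suffices to recover $\wlindp\lews\CBaire\upto\dom(\wlindp)$ in Proposition \ref{prop:noteq}.

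However, your remark that this subtree lies in $\dom(\lindp)$ ``by the argument of the first step'' is wrong when $U$ merely contains a Laver tree $L$. The fixed sets $D_b$ need not lie in $L$; for instance, if the root of $L$ has only even children and each $\langle 2k\rangle$ only odd ones, no $D_b$ works at level $2$. For the $\lindp$ case you should keep $C_U$ itself.
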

	\begin{proof}
		Assume that $U$ contains a Laver tree $S$. We build a Laver tree $S' \subseteq C_U$ recursively. Clearly $\langle \rangle \in S'$, and, if $\sigma \in S'$, then we add to $S'$ the set $\{\concat{\sigma}{n} : T(\concat{\sigma}{n}) \subseteq S\}$. We get that $S'$ is a Laver tree as, inductively, $T(\sigma) \subseteq S$, and there are infinitely many possible ways to extend $T(\sigma)$ by adding exactly one child to every node, picking these children in $S$. The fact that $j^{-1}(S') \subseteq j^{-1}(C_U)$ is a Laver tree follows from the fact that $j$ is an isomorphism of trees.
		
		Now assume that $S$ were an Hechler tree (recall Remark \ref{rem:nolaverhechler}). It is not hard to see that, for every $\sigma \in S'$, there are only finitely many immediate successors of $\sigma$ in $C$ which are not in $S'$. In other words, $j^{-1}(S') \subseteq j^{-1}(C_U)$ is a Hechler tree.
	\end{proof}
	
	\begin{proposition}\label{prop:noteq} $\CBaire \upto \dom(\lindp) \equiv_{\sW} \lindp$ and $\CBaire \upto \dom(\wlindp) \equiv_{\sW} \wlindp$
	\end{proposition}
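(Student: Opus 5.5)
We prove each equivalence by two strong reductions, reusing the maps from the proof of Proposition \ref{prop:closedsol} and Lemma \ref{lem:treesofcodes}. Instances of $\CBaire \upto \dom(\lindp)$ are trees $U$ containing a Laver tree; instances of $\CBaire \upto \dom(\wlindp)$ are trees $U$ that, by Remark \ref{rem:nolaverhechler}, contain a Hechler tree. Instances of $\lindp$ and $\wlindp$ are such trees viewed as codes for closed sets.

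For $\lindp \lews \CBaire \upto \dom(\lindp)$, take the forward reduction of Proposition \ref{prop:closedsol}. Given $U \in \dom(\lindp)$, compute $j^{-1}(C_U)$. By Lemma \ref{lem:treesofcodes}, $j^{-1}(C_U)$ lies in $\dom(\lindp)$, so it is a legitimate instance of the restricted choice problem. Any $g \in [j^{-1}(C_U)]$ gives $J(g) \in [C_U]$. Since $J$ is the homeomorphism induced by $j$, $\LL(g)=\bigcup_n T(J(g)\upto n)$ is a Laver tree contained in $U$. Hence $g$ itself is a solution, and no postprocessing with the input is needed. The same reduction gives $\wlindp \lews \CBaire \upto \dom(\wlindp)$, because the second clause of Lemma \ref{lem:treesofcodes} guarantees $j^{-1}(C_U) \in \dom(\wlindp)$ whenever $U \in \dom(\wlindp)$.

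For the reverse directions, given $U$ in the restricted domain we pass $U$ itself as an instance of $\lindp$ (respectively $\wlindp$); this is identity preprocessing. We receive a code $f$ with $\LL(f)$ a Laver tree and $\LL(f) \subseteq U$, and output the leftmost branch of $\LL(f)$. This is computable from $f$ alone: since $\LL(f)$ has infinite splitting everywhere and is uniformly computable from $f$, we repeatedly take the least child. The output is a path in $U$, so the reduction is strong. This direction is identical for the $\lindp$ and $\wlindp$ cases.

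The main obstacle is only in the first direction. The instance handed to the choice oracle must stay inside the restricted domain, and this is exactly what Lemma \ref{lem:treesofcodes} provides. For $\wlindp$ the point is that a Hechler subtree of $U$ lifts to a Hechler subtree of $j^{-1}(C_U)$. The remaining check is routine: $J$ is computable and $[j^{-1}(C_U)]$ corresponds exactly, via $J$ and $\LL$, to the Laver trees contained in $U$.
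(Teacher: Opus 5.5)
Your proof is the paper's proof: identity preprocessing plus leftmost path for $\CBaire\upto\dom(\cdot)\lews(\cdot)$, and $U\mapsto j^{-1}(C_U)$ with Lemma \ref{lem:treesofcodes} for the converse. The $\lindp$ half is correct, because a Laver tree $S\subseteq U$ does lift: if $T(\sigma)\subseteq S$, every node of $T(\sigma)$ has infinitely many admissible children in $S$, so infinitely many children of $\sigma$ in $C$ stay in $C_S$.

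\textbf{The gap.} The step you single out for $\wlindp$ fails. With the coding of Definition \ref{def:codesl}, a Hechler subtree of $U$ does \emph{not} lift to a Hechler subtree of $j^{-1}(C_U)$, and $j^{-1}(C_U)$ can even lie outside $\dom(\wlindp)$. This is exactly the assertion in the paper's proof of the second clause of Lemma \ref{lem:treesofcodes} that breaks, so the paper's argument has the same hole. The reason is that a child of $\sigma$ in $C$ chooses a new child for \emph{every} node of $T(\sigma)$ at once. When $|T(\sigma)|\geq 2$ you can freeze one coordinate at a node missing from $U$ and vary another, which gives infinitely many children of $\sigma$ outside $C_U$.

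Here is a concrete counterexample, reading $T(\langle\rangle)=\{\langle\rangle\}$ as intended. Let $U=\{\rho : \rho(i)\neq 0 \text{ for all } 1\leq i<|\rho|\}$. This is a Hechler tree, so $U\in\dom(\wlindp)$. Each level-one node $\sigma$ of $C$ has $T(\sigma)=\{\langle\rangle,\langle c\rangle\}$. Its children coding $D=\{\langle c'\rangle,\langle c,0\rangle\}$ with $c'>c$ are infinitely many and all lie outside $C_U$. Let $L\subseteq C$ consist of the root, all level-one nodes, these children, and everything in $C$ above them. Then $L$ is infinitely branching, so $j^{-1}(L)$ is a Laver tree, and $[j^{-1}(L)]\cap[j^{-1}(C_U)]=\emptyset$. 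So your preprocessing hands the oracle an instance outside $\dom(\wlindp)$.

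No other preprocessing rescues this while you output the path itself as the $\LL$-name. Any tree all of whose paths $g$ satisfy $[\LL(g)]\subseteq[U]$ has body inside $[j^{-1}(C_U)]$, hence misses $[j^{-1}(L)]$.

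\textbf{A fix.} Change the coding inside the reduction and convert at the end.
\begin{enumerate}
\item Build Laver trees by adding \emph{one} child at a time. At each stage, extend the node $\tau_\sigma$ of the current finite tree chosen by a fixed fair schedule: at stage $\langle a,b\rangle$, the $a$-th node added so far, or the root if there is none. As before, require the new child to exceed the existing children of $\tau_\sigma$.
\item The children of a code node $\sigma$ now correspond to the admissible values $k$ of the single new node $\concat{\tau_\sigma}{k}$. So if the finite tree coded by $\sigma$ lies in a Hechler (resp.\ Laver) tree $S\subseteq U$, cofinitely (resp.\ infinitely) many children of $\sigma$ keep it inside $S$.
\item Pulling back along the order-preserving isomorphism with $\omega^{<\omega}$ then gives a Hechler (resp.\ Laver) subtree of the preprocessed instance. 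In particular that instance lies in $\dom(\wlindp)$ (resp.\ $\dom(\lindp)$).
\item Every path decodes to a Laver tree contained in $U$, whose characteristic function is computable from the path alone: $\concat{\tau}{k}$ belongs to it iff $\tau$ does and the first child $\geq k$ that $\tau$ receives is exactly $k$.
\item This characteristic function converts computably into an $\LL$-name, so the reduction remains strong.
\end{enumerate}
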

	\begin{proof}
		The reduction $\CBaire \upto \dom(\lindp) \leq_{\sW} \lindp$ follows from Proposition \ref{prop:closedsol}. To show that $\CBaire \upto \dom(\wlindp) \leq_{\sW} \wlindp$ notice that, given a tree $T \in \dom(\wlindp)$, since any $S \in \wlindp(T)$ is a pruned tree contained in $T$, one can simply take its leftmost path to obtain a solution to $\CBaire(T)$.
		
		For the converse reductions we use Lemma \ref{lem:treesofcodes}. Let $U \in \dom(\lindp)$ and consider $j^{-1}(C_U)$. We know that $j^{-1}(C_U) \in \dom(\lindp)$, and, if $f$ is any path in $[j^{-1}(C_U)]$, then $\LL(f)$ is a Laver tree with $\LL(f) \subseteq U$. The proof of $\wlindp \leq_{\sW} \CBaire \upto \dom(\wlindp)$ is identical.
	\end{proof}
	
	To show that $\wlindp$ is not a cylinder, we rely on the following fact from classical computability theory (see \cite{hypencsets}).
	
	\begin{theorem}[Jockush-Solovay]\label{thm:recenc}
		A set $Z \subseteq \omega$ is hyperarithmetical if and only if, for every $X \in \ramsey$, there is some $Y \in [X]^{\omega}$ such that $Z \leq_{\T} Y$.
	\end{theorem}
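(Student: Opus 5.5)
The plan is to prove the two directions separately. The forward direction, that every hyperarithmetical $Z$ is computable in some $Y \in [X]^{\omega}$ for each $X$, will be handled by the Galvin--Prikry theorem. The converse will use a Mathias-style forcing with conditions relativised to hyperarithmetical reservoirs, combined with the Gandy basis / $\lSigma{1}{1}$-boundedness machinery.

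For the forward direction, suppose $Z \leq_{\T} 0^{(\alpha)}$ with $\alpha < \ock$. It suffices to treat $Z = 0^{(\alpha)}$ and to argue by effective transfinite induction on $\alpha$, proving the stronger uniform statement that every $X \in \ramsey$ has an infinite subset $Y$ with $0^{(\alpha)} \leq_{\T} Y$.

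For successor steps I would use the classical trick that a sufficiently sparse set computes the jump. First, by thinning I get $Y \subseteq X$ whose principal function dominates the modulus of $0^{(\beta+1)}$ relative to $0^{(\beta)}$, so that $Y \oplus 0^{(\beta)}$ computes $0^{(\beta+1)}$. Second, I make $Y$ itself encode $0^{(\beta)}$ by choosing elements of $X$ whose pairwise gaps code the bits. Combining the two, the element $p_Y(n)$ both bounds the settling time and encodes the $n$-th bit. Limits are handled the same way, interleaving the fundamental sequence along a notation for $\alpha$.

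A cleaner alternative to this induction is to note that the set $\{Y \in \ramsey : 0^{(\alpha)} \leq_{\T} Y \text{ via a fixed reduction}\}$ is Borel. Then Galvin--Prikry gives a homogeneous $[X']^{\omega}$ either inside or disjoint from it, and the disjoint case is refuted by building, inside any $[X']^{\omega}$, a subset coding $0^{(\alpha)}$ as above. I would pick whichever is shorter; the coding construction is needed in either case.

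For the converse, suppose $Z$ is not hyperarithmetical. I want $X \in \ramsey$ such that no $Y \in [X]^{\omega}$ computes $Z$. Take $X$ Mathias-generic over hyperarithmetic conditions, meaning pairs $(s, R)$ with $R \in \lDelta{1}{1}$ infinite. Every infinite $Y \subseteq X$ has the form $Y \subseteq$ the generic, so I would strengthen the forcing statement to a uniform one. For each $e$, the set of conditions $(s,R)$ such that no $Y$ with $s \subseteq Y \subseteq s \cup R$ has $\Phi_e(Y) = Z$ should be dense.

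To prove density, given $(s,R)$, I apply Galvin--Prikry (its effective version, Solovay's hyperarithmetic bound on homogeneous sets for $\lDelta{1}{1}$ partitions) to the partition deciding, for each $n$, whether $\Phi_e(Y)(n)$ converges. The outcome should be either a hyperarithmetic $R' \subseteq R$ on which $\Phi_e(Y)$ diverges somewhere, or one on which all such $Y$ agree on each bit. In the agreeing case $Z$ would be $\lDelta{1}{1}$, since it is the common value, which is definable from $R'$. The disagreeing case yields a condition forcing $\Phi_e(Y) \neq Z$ for all $Y$.

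The main obstacle is exactly this effective Galvin--Prikry step: guaranteeing that the homogeneous reservoir can be taken hyperarithmetical. That guarantee is what makes the common value of $\Phi_e(Y)$ hyperarithmetical, and it is also what keeps the conditions within the countable $\lDelta{1}{1}$ family so that a generic $X$ exists. I would first try to derive it from the ranking argument of Theorem \ref{thm:hl} together with $\lSigma{1}{1}$-boundedness, as in Corollary \ref{cor:openbound}.
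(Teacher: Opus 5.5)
The paper gives no proof of this theorem; it quotes it from the literature. Your proposal therefore has to stand on its own, and as written both directions have a gap.

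\emph{Forward direction.} Your induction hypothesis (every $X$ has an infinite $Y\subseteq X$ with $0^{(\beta)}\leq_{\T} Y$) does not survive the successor step. Once you thin $Y$ so that $p_Y$ dominates the settling-time function of $0^{(\beta+1)}$ over $0^{(\beta)}$, you lose whatever made $Y$ compute $0^{(\beta)}$. Your remedy, coding bits by which elements of $X$ you skip, can only be decoded by someone who knows $X$, and $Y$ need not compute $X$. No fixed gap criterion can replace it either, since $X$ may be arbitrarily sparse (or contain only even numbers).

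The missing idea is that coding by domination is what survives passing to subsets. Prove by effective transfinite induction, uniformly in a notation, that there is a nondecreasing $f_\alpha$ such that every $g$ majorizing $f_\alpha$ computes $0^{(\alpha)}$. At successors take $f_{\beta+1}=\max(f_\beta,s_\beta)$, where $s_\beta$ is the settling function; at limits combine the moduli along a fundamental sequence, using uniformity. Then any $Y\subseteq X$ with $p_Y\geq f_\alpha$ works, since $p_Y\leq_{\T} Y$. Your Galvin--Prikry variant adds nothing, as it still needs this coding.

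\emph{Converse.} This gap is more serious. It rests on the claim that the partitions $\{Y:\Phi_e(Y)(n)\downarrow\}$ always admit hyperarithmetical homogeneous reservoirs, and this is false even for computable open sets.

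Let $T$ be a computable tree with $[T]\neq\emptyset$ but no hyperarithmetical path, and let $U=\{Y\in\ramsey : p_Y\notin[T^+]\}$, with $T^+$ as in Lemma \ref{lem:abovetcb}. Every infinite $H$ has a subset $Y$ with $p_Y$ above a path of $T$, so no $H$ has $[H]^{\omega}\subseteq U$. If instead $[H]^{\omega}\cap U=\emptyset$, then $p_H\in[T^+]$. The nodes of $T$ below $p_H$ then form an $H$-computable, $H$-computably bounded tree with a path, whose leftmost path is computable in $H'$. Hence $H$ is not hyperarithmetical. Since every c.e.\ open set has the form $\{Y:\Phi_e(Y)(0)\downarrow\}$, neither horn of your dichotomy need be realized by a hyperarithmetical reservoir.

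This is exactly the open/closed asymmetry the paper stresses. The ranking argument of Theorem \ref{thm:hl} and Corollary \ref{cor:openbound} yields hyperarithmetical witnesses only inside the open set, and only Laver trees, which are much weaker than trees of the form $[H]^{<\omega}$. The paper notes that $\mathsf{FindHS}_{\bSigma{0}{1}}$ lies far above $\linds$. On the closed (divergence) side one only gets $\CBaire$-type, pseudohierarchy solutions. So the density of your sets $D_e$ is unproved, and that density is precisely the hard content of Solovay's theorem; it needs a genuinely different argument.

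A minor point: subsets of the generic may omit elements of the stem. So $D_e$ must control every infinite $Y\subseteq s\cup R'$, not only those with $s\subseteq Y$; handling the finitely many $t\subseteq s$ one at a time fixes this.
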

	
	We now present a tool for Weihrauch analysis based on Jockush and Solovay's characterization. A similar tool has been previously used by the authors in \cite{marconeosso1}.
	
	\begin{lemma}\label{lem:nocylinder}
		Let $f \subc \baire \multif \baire$ be such that, for every $x \in \dom(f)$ and every $X \in \ramsey$, $[X]^{\omega} \cap f(x) \neq \emptyset$. Then $\id{{\baire}} \nleq_{\sW} f$, and in particular $f$ is not a cylinder.
	\end{lemma}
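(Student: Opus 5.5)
We argue by contradiction. Suppose $\id{{\baire}} \lews f$ via computable functionals $\varphi$ and $\psi$: for every $p \in \baire$ we have $\varphi(p) \in \dom(f)$, and for every $y \in f(\varphi(p))$ we have $\psi(y) = p$. The plan is to pick $p$ so complicated that no $y$ of the kind guaranteed by the hypothesis can compute it.

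Concretely, fix a non-hyperarithmetical $Z \subseteq \omega$, for instance Kleene's $\ko$, and let $p = \chi_Z$. Since $x = \varphi(p) \in \dom(f)$, the hypothesis gives, for every $X \in \ramsey$, some $Y \in [X]^{\omega} \cap f(x)$. For each such $Y$ we get $\psi(Y) = Z$, so $Z \leq_{\T} Y$. Hence for every $X \in \ramsey$ there is $Y \in [X]^{\omega}$ with $Z \leq_{\T} Y$, and Theorem \ref{thm:recenc} (Jockusch--Solovay) then makes $Z$ hyperarithmetical, which is a contradiction. The identification of $\ramsey$ with a subspace of $\baire$ via principal functions, as in the paper, is what makes ``$Y \in f(x)$'' meaningful. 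Moreover $Y$ computes its principal function and conversely, so Turing reducibility is unaffected.

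For the ``in particular'' clause: if $f$ were a cylinder, then $\id{{\baire}} \times f \equiv_{\sW} f$. Since $\id{{\baire}} \lews \id{{\baire}} \times f$ trivially (provided $\dom(f) \neq \emptyset$; otherwise $f$ cannot be strongly equivalent to $\id{{\baire}} \times f$ unless both are empty, a degenerate case we exclude), we would get $\id{{\baire}} \lews f$, contradicting the first part.

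The only step needing care is that the reduction is strong: the output $\psi(Y)$ depends on $Y$ alone, with no access to $p$. This is exactly what makes $Z \leq_{\T} Y$ hold and lets Jockusch--Solovay apply. Beyond that, the argument is a routine unwinding of the definitions.
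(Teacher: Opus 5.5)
Your argument is correct and is exactly the paper's proof: fix a non-hyperarithmetical real, use that in a strong reduction the backward functional sees only the $f$-solution, so every element of $f(\varphi(x))$ computes that real, and contradict Jockusch--Solovay. One small precision in your cylinder clause: $\id{{\baire}} \lews \id{{\baire}} \times f$ needs a \emph{computable} point of $\dom(f)$, not just nonemptiness; this is harmless, because your main argument only uses computability of $\psi$, never of $\varphi$, and you can also run it directly on a reduction $\id{{\baire}} \times f \lews f$.
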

	
	An $f$ as above was said to ``meet every infinite set'' in \cite[Proposition 5.22]{marconeosso1}.
	
	\begin{proof}
		Suppose $(\varphi, \psi)$ witnesses a reduction $\id{{\baire}} \leq_{\sW} f$, and let $x \in \baire$ be a non-hyperarithmetical real. We obtain that, for every $g \in f(\varphi(x))$, $\psi(g)=x$. Since $f$ meets every infinite set, it follows that for every $X \in \ramsey$ there is some $Y \in [X]^{\omega}$ such that $x \leq_{\T} Y$. By Theorem \ref{thm:recenc}, this contradicts the fact that $x$ is not hyperarithmetical.
	\end{proof}
	
	\begin{theorem}
		$\wlindp$ is not a cylinder, hence $\wlindp \not\equiv_{\sW} \lindp$.
	\end{theorem}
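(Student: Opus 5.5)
The plan is to apply Lemma \ref{lem:nocylinder} not to $\wlindp$ itself, whose solutions are codes $f$ with $\LL(f)$ a Laver tree, but to the strongly equivalent problem $\CBaire \upto \dom(\wlindp)$ given by Proposition \ref{prop:noteq}. Being a cylinder is invariant under strong Weihrauch equivalence (Proposition \ref{prop:summary}), and so is the property $\id{{\baire}} \lews f$. Hence it suffices to show that $\CBaire \upto \dom(\wlindp)$ meets every infinite set.

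The key step is the following claim: if $V \in \dom(\wlindp)$ and $X \in \ramsey$, then $[X]^{\omega} \cap [V] \neq \emptyset$. To prove it, use Remark \ref{rem:nolaverhechler}, which relies on Theorem \ref{thm:hl} (that is, $\hlp$). Since $[V]$ is closed and $V \in \dom(\wlindp)$, there is a Hechler tree $S \subseteq V$. Then apply Lemma \ref{lem:containsXo} in its $\omega$-model, $\zfc$ form. More directly, one can run the recursion from its proof inside $X$: build a strictly increasing sequence of elements of $X$, at each step choosing the next element of $X$ beyond the finitely many forbidden values for the finitely many nodes already constructed. Cofiniteness of splitting in $S$ guarantees that such an element exists in $X$. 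Alternatively, choose greedily a single increasing branch: at node $\sigma \in S$, pick the least element of $X$ above $\sigma$'s last entry with $\concat{\sigma}{n} \in S$. This gives $p_Y \in [S] \subseteq [V]$ with $Y \in [X]^{\omega}$, which is all that is needed. The greedy version is simpler, so I would use it.

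Lemma \ref{lem:nocylinder} then yields $\id{{\baire}} \nleq_{\sW} \CBaire \upto \dom(\wlindp)$, hence $\id{{\baire}} \nleq_{\sW} \wlindp$, so $\wlindp$ is not a cylinder. Finally, $\lindp \equiv_{\sW} \CBaire$ by Proposition \ref{prop:closedsol}, and $\CBaire$ is a cylinder. By invariance of cylinders under $\equiv_{\sW}$, this gives $\wlindp \not\equiv_{\sW} \lindp$.

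The main subtle point is making sure the hypothesis of Lemma \ref{lem:nocylinder} is checked for the right represented problem: solutions must be actual elements of $\baire$ lying in $[X]^{\omega}$. For $\CBaire \upto \dom(\wlindp)$ a solution is just a path of $V$, so the greedy branch works verbatim. One also needs the Hechler tree $S \subseteq V$ rather than a mere Laver tree, since a Laver tree need not meet every $[X]^{\omega}$. This is exactly where the weak domain condition and $\hlp$ enter.
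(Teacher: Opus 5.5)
Your proposal is correct and follows essentially the same route as the paper. You pass to the strongly equivalent $\CBaire \upto \dom(\wlindp)$ via Proposition~\ref{prop:noteq}, apply Lemma~\ref{lem:nocylinder}, and use that a Hechler tree $S \subseteq V$ (which exists by $\hlp$, as in Remark~\ref{rem:nolaverhechler}) has a greedy branch $p_Y$ with $Y \in [X]^{\omega}$ for every $X \in \ramsey$. You also make explicit two steps the paper leaves implicit: the appeal to $\hlp$, and the final separation from $\lindp$ via Proposition~\ref{prop:closedsol} and the fact that $\CBaire$ is a cylinder.
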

	\begin{proof}
		Since ``being a cylinder'' is preserved under strong Weihrauch equivalence, we show that $\CBaire \upto \dom(\wlindp)$ is not a cylinder. By Lemma \ref{lem:nocylinder}, it suffices to show that $\CBaire \upto \dom(\wlindp)$ meets every infinite set. This follows from the fact that, if $T$ is a Hechler tree and $X \in \ramsey$, there is some $Y \in [X]^{\omega}$ such that $p_Y \in T$ (where $p_Y$ is the principal function of $Y$).
	\end{proof}
	
	We conclude the section by showing that $\wlindp$ is parallelizable. Note that $\lindp$ and all the principles mentioned in Theorem \ref{thm:ucbaire} are parallelizable because $\CBaire$ and $\UCBaire$ are.
	
	\begin{proposition}
		$\wlindp \equiv_{\W} \widehat{\wlindp}$.
	\end{proposition}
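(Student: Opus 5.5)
The plan is to reduce $\widehat{\wlindp}$ to a single instance of $\wlindp$ by a column-interleaving construction. We work throughout with the equivalent problem $\CBaire \upto \dom(\wlindp)$ given by Proposition \ref{prop:noteq}. The reduction $\wlindp \lew \widehat{\wlindp}$ is trivial, so only the converse needs an argument.

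First, take an instance $(V_i)_{i \in \omega}$ of $\widehat{\wlindp}$, a sequence of trees coding closed sets in $\dom(\wlindp)$. By Remark \ref{rem:nolaverhechler}, each $V_i$ contains a Hechler tree. By Lemma \ref{lem:treesofcodes}, we may uniformly pass to $U_i = j^{-1}(C_{V_i})$, which is again in $\dom(\wlindp)$ and so contains a Hechler tree $H_i$. Any path of $U_i$ decodes, via $\LL$, to a Laver tree contained in $V_i$. It therefore suffices to compute, uniformly from any Laver tree inside a single combined closed set, a sequence of paths $g_i \in [U_i]$.

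Second, fix a computable bijection $\langle \cdot,\cdot \rangle \colon \omega^2 \to \omega$ that is increasing in the second coordinate for each fixed first coordinate. For $\sigma \in \omega^{<\omega}$, let $\sigma^{[i]}$ be the string $k \mapsto \sigma(\langle i,k\rangle)$, taken over those $k$ with $\langle i,k\rangle < |\sigma|$. Because the pairing is increasing in $k$, this is an initial segment of the $i$-th column. Define
\[U = \{\sigma : \forall i < |\sigma| \,\, \sigma^{[i]} \in U_i\}.\]
This is a computable tree, since only finitely many columns of $\sigma$ are nonempty.

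Third, check that $U \in \dom(\wlindp)$ by exhibiting a Hechler subtree $H = \{\sigma : \forall i \,\, \sigma^{[i]} \in H_i\}$. Take $\sigma \in H$ with $|\sigma| = \langle i,k\rangle$. Any extension $\concat{\sigma}{n}$ changes only column $i$, appending $n$ to $\sigma^{[i]}$. Since $H_i$ is Hechler, $\concat{\sigma^{[i]}}{n} \in H_i$ for all but finitely many $n$, so $\concat{\sigma}{n} \in H$ for cofinitely many $n$. Thus $H$ is a Hechler tree with $H \subseteq U$. By Remark \ref{rem:nolaverhechler}, or directly because every Laver tree meets $[H]$, we get $U \in \dom(\wlindp)$. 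This verification is the step I expect to need the most care: one has to confirm that appending to one column never disturbs the others and that the pairing convention keeps each $\sigma^{[i]}$ an initial segment.

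Finally, given a Laver tree $S$ with $[S] \subseteq [U]$, compute its leftmost path $g$, which is possible because Laver trees are pruned. Every column $g^{[i]}$ lies in $[U_i]$, and applying $\LL$ to $g^{[i]}$ yields a Laver tree inside $V_i$. This output uses only $S$, so the reduction is in fact strong.
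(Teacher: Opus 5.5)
Your interleaving step is correct: if $|\sigma|=\langle i,k\rangle$, then appending to $\sigma$ only extends its $i$-th column, so a column-wise interleaving of Hechler trees is again a Hechler tree. The gap is in your first step. You need each $U_i=j^{-1}(C_{V_i})$ to contain a Hechler tree, and you take this from the \emph{moreover} clause of Lemma~\ref{lem:treesofcodes}. For the coding of Definition~\ref{def:codesl}, that clause is false.

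The reason is that a successor of $\sigma$ in $C$ adds a new child to \emph{every} node of $T(\sigma)$ at once. So a single forbidden child at one node combines with infinitely many choices at the other nodes. Concretely, let $V=\{\rho : \rho \text{ has no entry } 0\}$, which is itself a Hechler tree. Read the base case as $T(\langle\rangle)=\{\langle\rangle\}$. Let $L\subseteq C$ keep all $C$-children of every node, except that above each node $\tau$ with $T(\tau)=\{\langle\rangle,\langle a\rangle\}$ it keeps only the infinitely many children adding $\{\langle a'\rangle,\langle a,0\rangle\}$ with $a'>a$. Then $j^{-1}(L)$ is a Laver tree with $[j^{-1}(L)]\cap[j^{-1}(C_V)]=\emptyset$, so $j^{-1}(C_V)\notin\dom(\wlindp)$. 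Now take $V_i=V$ for all $i$. The Laver tree $\{\sigma : \sigma^{[0]}\in j^{-1}(L)\}$ misses $[U]$, so your forward map leaves $\dom(\wlindp)$. This is a defect of the lemma rather than of your own reasoning; the lemma also carries the reduction $\wlindp \lews \CBaire \upto \dom(\wlindp)$ of Proposition~\ref{prop:noteq}. But your proof rests on it, whereas the paper's proof of this proposition does not use it.

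The detour through codes is not needed. Interleave the $V_i$ themselves, $V^*=\{\sigma : \forall i\ \sigma^{[i]}\in V_i\}$. Your interleaving step, applied to Hechler trees $H_i\subseteq V_i$ from Remark~\ref{rem:nolaverhechler}, shows $V^*\in\dom(\wlindp)$. Now let $S$ be a Laver tree with $[S]\subseteq[V^*]$; since $S$ is pruned, $S\subseteq V^*$. Build $Q_i$ by following $S$ along column $i$:
\begin{itemize}
\item start from the leftmost node of $S$ of length $\langle i,0\rangle$;
\item attach to each $\rho\in Q_i$ a node $s(\rho)\in S$ of length $\langle i,|\rho|\rangle$ with $s(\rho)^{[i]}=\rho$;
\item put $\concat{\rho}{n}\in Q_i$ iff $\concat{s(\rho)}{n}\in S$;
\item let $s(\concat{\rho}{n})$ be the leftmost extension of $\concat{s(\rho)}{n}$ in $S$ of length $\langle i,|\rho|+1\rangle$.
\end{itemize}
Each $Q_i$ is a Laver tree contained in $V_i$, computed uniformly from $S$ alone, so the reduction is strong.

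Thus repaired, your route genuinely differs from the paper's. The paper uses the single closed set of those $f$ with $f\in\bigcap_{i\le f(0)}C_i$, so each branch faces only finitely many constraints. Membership in the domain then needs only that finitely many Hechler trees intersect in a Hechler tree, and solutions are read off from the parts of $S$ above suitable first entries. Your construction imposes all constraints at once. It replaces the $f(0)$ trick with the observation that an infinite interleaving of Hechler trees is Hechler, at the price of a slightly more involved extraction.
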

	\begin{proof}
		Let $((C_m)_{m \in \omega})$ be an instance of $\widehat{\wlindp}$. We define a closed set $C$ as follows: $f \in C$ if and only if $f \in \bigcap_{i \leq f(0)}C_i$. To prove that $C \in \dom(\wlindp)$ let $T$ be any Laver tree. Let $m$ be least such that $\langle m \rangle \in T$ and let $S=\{\sigma \in \omega^{<\omega} : \concat{m}{\sigma} \in T\}$. Notice that $S$ is a Laver tree. Consider the closed sets $(C_n)_{n \leq m}$: since each of these belongs to $\dom(\wlindp)$, there are Hechler trees $(S_n)_{n \leq m}$ such that, for every $n$, $[S_n] \subseteq C_n$. Therefore, the Hechler tree $S'=\bigcap_{n \leq m} S_n$ is such that $[S'] \subseteq \bigcap_{n \leq m} C_n$. Since $[S] \cap [S]' \neq \emptyset$, it follows that $[S] \cap \bigcap_{n \leq m} C_n \neq \emptyset$ and hence $[T] \cap C \neq \emptyset$. This shows that $C \in \dom(\wlindp)$.
		
		Given $S \in \wlindp(C)$ it is immediate to build a solution $(S_m)_{m \in \omega}$ to the  $\widehat{\wlindp}$ instance $((C_m)_{m \in \omega})$.
	\end{proof}
	
	\subsection{The degree of \texorpdfstring{$\mathsf{LL}_{\bSigma{0}{1}}$}{LLS}}
	
	We conclude our analysis with a study of the position of $\lls$ in the (arithmetical) Weihrauch degrees.
	
	\begin{lemma}\label{lem:abovetcb}
		$\tCBaire \leq_{\W} \CCantor \star \lls$, so $\tCBaire \leq^{\ari}_{\W} \lls$.
	\end{lemma}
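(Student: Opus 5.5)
We reduce $\tCBaire$ to $\CCantor \star \lls$ by adapting the proof of Proposition \ref{prop:sigma11}, with $\CCantor$ handling the final step where $\wkl$ was used there. Given an arbitrary tree $V$ (an instance of $\tCBaire$, possibly wellfounded), form the smooth closure $V^+=\{\sigma : \exists \tau \in V\cap\omega^{|\sigma|}\,\, \tau \leq \sigma\}$. Then define, computably in $V$, an open code for the set
\[A=\{f \in \baire : \leftof{V}{f^2\upto f(1)}\neq\emptyset \land \forall g\leq f^2\,\,(g\notin[V])\}.\]
This mirrors the closed set $C$ of Proposition \ref{prop:sigma11} with a single tree in place of the sequence, so the role of $f(0)$ becomes vacuous. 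The condition is open: the first conjunct is decidable from a finite prefix, and by K\H{o}nig's Lemma for the finitely branching tree $\{\tau\in V:\tau\leq f^2\}$, the failure of a path below $f^2$ is witnessed at a finite level. The code is therefore uniformly computable from $V$. We apply $\lls$ to $A$ and obtain a Laver tree $S=\LL(h)$ with $[S]\subseteq A$ or $[S]\cap A=\emptyset$.

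Next we split into cases according to whether $[V]$ is empty. If $[V]\neq\emptyset$, we argue that $[S]\subseteq A$ is impossible. Pick $g\in[V]$. Since $S$ is Laver, there is $f\in[S]$ with $f^2\geq g$ pointwise beyond the first two entries, so $\leftof{V}{f^2\upto f(1)}\neq\emptyset$ and $g\leq f^2$, hence $f\notin A$. Therefore $[S]\cap A=\emptyset$. As in Proposition \ref{prop:sigma11}, we then search $S$ for a node $\concat{\langle n_0,n_1\rangle}{\sigma}$ with $|\sigma|=n_1$ and $\leftof{V}{\sigma}\neq\emptyset$, and take the leftmost path $h'$ of $S$ through it. Then $h'\notin A$ forces some $g\in[V]$ with $g\leq h'^2$. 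So the finitely branching tree $V_{h'}=\{\tau\in V:\tau\leq h'^2\}$ is infinite, and it is computably coded as a binary tree $B$, uniformly in $V$ and $h$. Applying $\CCantor$ (equivalently $\tCCantor$, by Proposition \ref{prop:summary}) to $B$ yields a path, which we decode to a member of $[V]$.

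If $[V]=\emptyset$, any output is acceptable, but the procedure must not diverge. The main obstacle is guaranteeing that the search for $\sigma$ and the construction of $B$ stay total. When $V$ is wellfounded, $\leftof{V}{\sigma}$ may be empty for all long $\sigma$, so the search could fail to halt. We handle this by making the construction of $B$ lazy: $B$ is built in stages, and while no suitable $\sigma$ has yet been found, $B$ keeps a default infinite branch such as $0^\omega$ available. Once $\sigma$ is found, we graft the coded tree $V_{h'}$ above the current stage. If the grafted tree is finite, $\CCantor$ is replaced by its totalization $\tCCantor\equiv_{\sW}\CCantor$, which returns something arbitrary, and that output is allowed. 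In every case the composite is computable preprocessing, then $\lls$, then a computable map, then $\tCCantor$, then a computable map using the original input. This gives $\tCBaire\lew\CCantor\star\lls$.

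The arithmetical consequence then follows. $\CCantor\lew\lim$, so $\CCantor\star\lls\lew\lim\star\lls$, and by Proposition \ref{prop:summary}(6) this yields $\tCBaire\leq^{\ari}_{\W}\lls$.
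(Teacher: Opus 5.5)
Your analysis of the case $[V]\neq\emptyset$ is correct, and so is the final arithmetical step. The gap is in the wellfounded case, and the lazy construction does not close it. In fact nothing can close it with your choice of $A$.

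The difficulty is not building $B$ but the final computable map. It must output a total element of $\baire$ for every answer of $\tCCantor$, including $0^\omega$ when the search for $\sigma$ never halts. That does happen. For $V=\{\langle\rangle\}$ we have $A=\{f:f(1)=0\}$, so any Laver tree $S$ all of whose paths satisfy $f(1)\geq 1$ is a legitimate $\lls$ answer. Yet no node $\concat{\langle n_0,n_1\rangle}{\sigma}$ of $S$ has $\leftof{V}{\sigma}\neq\emptyset$.

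By continuity, your decoding fixes its first output value after reading finite portions of $V$, of the name of $S$, and of the $\tCCantor$ answer. (If you let $B$ be more complicated than $\{0^s\}$, compactness of $[B]$ still confines this value to a finite set $Y_0$.) Now let $V'$ agree with $V$ on that finite portion, but have unique path $m^\omega$ for a fresh large $m\notin Y_0$.

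Keep the first steps of the code of $S$, and choose all later nodes inside $S$ with every entry at position $\geq 2$ at least $m$. This gives a Laver tree $S'\subseteq S$ with $[S']\cap A'=\emptyset$. So $S'$ is a valid $\lls$ answer for $V'$ whose code agrees with that of $S$ on a long prefix.

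The search now halts only late. By continuity of your middle map, every path of the new $\tCCantor$ instance begins like a path of the old one. Hence your decoding still outputs a first value in $Y_0$, not $m$.

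So no middle map and decoding work for this $A$. The conjunct $\leftof{V}{f^2\upto f(1)}\neq\emptyset$, copied from Proposition \ref{prop:sigma11}, hides the useful information in $S$ behind an unbounded search.

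The fix is to drop that conjunct and apply $\lls$ to the open set $\baire\setminus[V^+]$; you defined $V^+$ but never used it.

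If $[V]\neq\emptyset$, then $[V^+]$ meets every Laver tree, so every answer satisfies $[S]\subseteq[V^+]$. The leftmost path $f$ of $S$ is computable from $S$ in every case, with no search that can fail, and it bounds a path of $V$.

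Then feed the finitely branching tree $\{\tau\in V:\tau\leq f\}$ to $\tCCantor$. Use a block coding of $\prod_n\{0,\dots,f(n)\}$ into $\cantor$ whose decoding is total on all of $\cantor$. When $[V]=\emptyset$, whatever comes out is acceptable.

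This is exactly the paper's argument.
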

	\begin{proof}
		This is an elaboration on the proof of Proposition \ref{prop:sigma11}, using the equivalence $\CCantor \equiv_{\W} \tCCantor$. Given a tree $T$, thought of as an instance of $\tCBaire$, we consider the tree $T^+$. Note that if $[T] \neq \emptyset$, then $T^+ \in \dom(\wlindp)$, hence any $S \in \lls(T^+)$ is such that $S \subseteq T^+$. In particular, in this case we would have that the leftmost path $f \in [S]$ is a bound for some path in $T$, so that $\leftof{T}{f} \in \dom(\CCantor)$ and any $g \in \CCantor(\leftof{T}{f})$ is a path in $[T]$. If $[T]=\emptyset$, the same algorithm described above still works: given any Laver tree $S \in \lls(T^+)$, we take its leftmost path and notice that any $g \in \tCCantor(\leftof{T}{f})$ is a valid solution to $\tCBaire(T)$. 
	\end{proof}
	
	We immediately obtain a separation between the Weihrauch degree of $\CBaire$ and that of $\lls$, as in \cite[Corollary 4.15]{marconevalenti}
	
	\begin{corollary}\label{cor:abovewf}
		$\mathsf{WF} \leq_{\W} \mathsf{LPO} \star \lls$, so $\lls \nleq_{\W} \CBaire$.
	\end{corollary}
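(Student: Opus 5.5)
We derive the corollary from Lemma~\ref{lem:abovetcb}, following \cite[Corollary 4.15]{marconevalenti}. For $\mathsf{WF} \leq_{\W} \mathsf{LPO} \star \lls$, take a tree $T$ and run the procedure of Lemma~\ref{lem:abovetcb}: form $T^+$, apply $\lls$ to it, and let $f$ be the leftmost path of the resulting Laver tree $S$. Then $\leftof{T}{f}$ is finitely branching, since every node at level $n$ is bounded coordinatewise by $f\upto n$. We will show that
\[
[T]\neq\emptyset \iff [\leftof{T}{f}] \neq \emptyset .
\]
For the forward direction, suppose $[T]\neq\emptyset$. Then no Laver tree is disjoint from $[T^+]$: given a Laver tree, we can choose a branch of it that majorizes a path of $T$, and that branch lies in $[T^+]$. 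So necessarily $[S]\subseteq[T^+]$, hence $f\in[T^+]$, and some $g\in[T]$ satisfies $g\leq f$. This $g$ is a path through $\leftof{T}{f}$. The backward direction is trivial.

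Since $\leftof{T}{f}$ is finitely branching and its branching is computable from $T$ and $f$, König's lemma reduces the question to whether every level of $\leftof{T}{f}$ is nonempty. That is a $\Pi^0_1$ question in $(T,f)$, so one call to $\mathsf{LPO}$ on the sequence recording ``level $n$ is empty'' decides it. The step needing care is that the compositional product allows the $\mathsf{LPO}$ instance to be computed from the $\lls$-output together with the original input $T$. This holds because $\star$ permits computable processing of the input alongside the first output; formally one passes through the cylinder $\id{\baire}\times\lls$ or uses the standard definition of $\star$.

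For $\lls \nleq_{\W} \CBaire$, suppose for contradiction that $\lls\leq_{\W}\CBaire$. Then
\[
\mathsf{WF}\leq_{\W} \mathsf{LPO}\star\lls \leq_{\W} \mathsf{LPO}\star\CBaire \leq_{\W} \CBaire\star\CBaire \equiv_{\W}\CBaire,
\]
using monotonicity of $\star$, the reduction $\mathsf{LPO}\leq_{\W}\CBaire$, and closure of $\CBaire$ under composition from Proposition~\ref{prop:summary}. But $\mathsf{WF}\nleq_{\W}\CBaire$ is well known: every computable instance of $\CBaire$ has a hyperarithmetic-low solution, indeed one computable from Kleene's $\mathcal{O}$-low bases via Gandy basis considerations, whereas $\mathsf{WF}$ on computable inputs yields $\Pi^1_1$-complete information. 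The cleanest way to see this is to note that a reduction would make the set $\{e: \Phi_e \text{ is an illfounded tree}\}$, a $\Sigma^1_1$-complete set, computable from some element of a nonempty $\Sigma^1_1$ class, and the Gandy basis theorem provides such an element $Y$ with $\ko^Y\leq_{\T}\ko$. That contradicts $\Sigma^1_1$-completeness only if handled uniformly, so I would instead simply cite this non-reduction from \cite{kmp} or \cite{marconevalenti}. The main obstacle is verifying the forward direction of the displayed equivalence, i.e.\ that $[S]\subseteq[T^+]$ is forced when $[T]\neq\emptyset$, which is what Lemma~\ref{lem:abovetcb} already establishes.
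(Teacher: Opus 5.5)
Your proposal is correct and follows the paper's own argument. The leftmost path of any $\lls$-solution for the open complement of $[T^+]$ bounds a path of $T$ whenever $[T]\neq\emptyset$, and a single $\mathsf{LPO}$ call settles the $\Pi^0_1$ question of whether every level of the bounded tree $\leftof{T}{f}$ is nonempty. The separation then follows from $\mathsf{WF}\nleq_{\W}\CBaire$, using $\mathsf{LPO}\lew\CBaire$ and the closure of $\CBaire$ under $\star$.

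The Gandy-basis digression is unnecessary, though it can be completed by passing to $\widehat{\mathsf{WF}}$, since $\CBaire$ is parallelizable. Simply citing \cite{kmp} for $\mathsf{WF}\nleq_{\W}\CBaire$, as you end up doing, is exactly what the paper does.
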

	\begin{proof}
		The reduction follows from the proof of Lemma \ref{lem:abovetcb}: given a tree $T$, the leftmost path $g$ of any $S \in \lls(T^+)$ is guaranteed to be a bound for an element of $[T]$, if the latter is nonempty. Using $\mathsf{LPO}$ we can check whether, for every $n$, there is some $\sigma \in T \cap \omega^n$ such that $\sigma(i) \leq g(i)$ for all $i<|\sigma|$, and hence obtain the solution to $\mathsf{WF}(T)$. The separation $\lls \nleq_{\W} \CBaire$ follows as in the proof of \cite[Corollary 4.15]{marconevalenti} from $\mathsf{WF} \nleq_{\W} \CBaire$ (see \cite[Section 7]{kmp}).
	\end{proof}
	
	We show that the arithmetical reduction of Lemma \ref{lem:abovetcb} is strict, closely following the proof of \cite[Corollary 5.13]{marconevalenti}. The first ingredient is a variant of Corollary \ref{cor:abovewf}, in which we show that the reduction $\mathsf{WF} \leq_{\W} \mathsf{LPO} \star \lls$ can be specialized to two particular domains.
	
	\begin{lemma}\label{lem:comptrees}
		Let $X \subseteq \dom(\lls)$ consist of the instances of $\lls$ which have no arithmetical solutions, and let $Y$ be the set of computable trees. Then $\mathsf{WF} \upto Y \leq_{\W} \mathsf{LPO} \star \lls \upto X$.
	\end{lemma}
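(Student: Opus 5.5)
The plan is to combine the instance $T^+$ used in Corollary \ref{cor:abovewf} with a fixed computable ``hard'' instance of $\lls$ that has no arithmetical solutions. The combination should preserve two properties: every solution of the combined instance must compute a solution of the hard instance, which makes it non-arithmetical and so places the instance in $X$; and it must still yield, uniformly and computably, a function $g$ that bounds a path of $T$ whenever $[T]\neq\emptyset$. Once we have such a $g$, the final $\mathsf{LPO}$ step is exactly as in Corollary \ref{cor:abovewf}: decide whether $\forall n\,\exists \sigma \in T\cap\omega^n\,(\sigma \leq g\upto n)$.

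\emph{Hard instance.} First I would fix a computable clopen $D$ with two properties. Every Laver tree meets $D$, so $D \in \dom(\wlindd)$. Every Laver tree $S$ with $[S]\subseteq D$ computes $0^{(\omega)}$. Such a $D$ should come from Proposition \ref{prop:delta11} and Theorem \ref{thm:ucbaire}. Apply the construction of the clopen set $C$ to a computable $\bDelta{1}{1}$ description of $0^{(\omega)}$, i.e.\ a computable double sequence $(T^0_i,T^1_i)$. Every Laver tree meets $C$, and any Laver tree inside $C$ computes the set $Z$ via the decoding in the proof of Lemma \ref{lem:hldatr}. Since $D$ meets every Laver tree, $\lls(D)$ forces the solutions of $D$ to lie inside $D$.

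\emph{Combination.} Write $f_e$ and $f_o$ for the even and odd parts of $f$, and set
\[A_T=\{f : f_e\in D \land f_o \in \baire\setminus[T^+]\}\cup\{f : f_e\in D \land f_o\in [T^+]\}^{c}\text{-type modifications},\]
so that the final clopen/open condition is genuinely open and computable from $T$. The guiding choice I would try first is $A_T=\{f: f_e\notin D\}\cup\{f : f_o\notin[T^+]\}$, which is open because $D$ is clopen and $[T^+]$ is closed. The basic tool is the following observation. If $S$ is a Laver tree, then the ``even projection'' is again a Laver tree $S_e$, computable from $S$: a string $\sigma$ belongs to $S_e$ if some node of $S$ has even part $\sigma$, and at odd levels we choose leftmost children. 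The same holds for the odd projection $S_o$. Moreover, starting from any node we can pick branches of $S$ whose even and odd parts follow prescribed Laver-tree branches.

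\emph{Case analysis.} Suppose $[S]\cap A_T=\emptyset$. Then every branch has $f_e\in D$ and $f_o\in[T^+]$. Hence $S_e$ computes $0^{(\omega)}$, and the leftmost path of $S_o$ is the desired bound $g$. Suppose instead $[S]\subseteq A_T$, and $[T]\neq\emptyset$. Then $T^+$ contains a Hechler tree $H$ (the majorants of a path of $T$). Because $D$ meets every Laver tree, and $H$ meets every Laver tree, we can interleave to produce a branch of $S$ with $f_e\in D$ and $f_o\in[H]\subseteq[T^+]$, which is a contradiction. So in this case $T$ is wellfounded. However, nothing yet forces $S$ to compute $0^{(\omega)}$, and this is the main obstacle.

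To handle the second case I would modify $A_T$ so that the ``inside'' alternative also carries the hard instance. The move I would try first is to replace the first disjunct by a copy of the hard clopen on a third interleaved coordinate: $\{f : f_{e}\in D' \}$ with $D'$ chosen so that no Laver tree is disjoint from $D'$ but Laver trees inside $D'$ compute $0^{(\omega)}$. The case split must then be redone so that every alternative projects onto a solution of a hard instance.

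Once the combined instance lies in $X$, the reduction is uniform. The preprocessing is the computable map $T\mapsto A_T$. The postprocessing takes the leftmost path of the appropriate projection and makes one $\mathsf{LPO}$ query. This yields $\mathsf{WF}\upto Y\leq_{\W}\mathsf{LPO}\star\lls\upto X$.
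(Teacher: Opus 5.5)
You have a genuine gap. It is the one you flag yourself, and the repair you sketch cannot close it.

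With $A_T=\{f: f_e\notin D\}\cup\{f: f_o\notin[T^+]\}$, if $T$ is wellfounded then $[T^+]=\emptyset$. So $A_T=\baire$, and the computable tree $\omega^{<\omega}$ is a solution. Hence $A_T\notin X$ on exactly the inputs where $\mathsf{WF}$ must answer ``wellfounded''. Your repair replaces the first disjunct by a hard clopen on a third coordinate, but it keeps $\{f: f_o\notin[T^+]\}$ as a union component. The instance is therefore still all of $\baire$ whenever $T$ is wellfounded. No choice of extra coordinates helps as long as the $T$-dependent open set enters disjunctively.

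A smaller point: the claim that one can pick branches of $S$ whose even and odd parts follow \emph{prescribed} Laver trees is false in general, since the even projection of $S$ and a given Laver tree can have disjoint bodies. It is true when one constraint is membership in a Hechler tree, which is all you actually use. So your treatment of the case $[T]\neq\emptyset$ is fine.

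The missing idea is to combine the two ingredients conjunctively. Fix a computable clopen instance $D$ of $\wlindd$ all of whose solutions compute a non-arithmetical real. The paper uses the open set $(\baire\setminus[T^+])\cap D$.
\begin{itemize}
\item If $[T]=\emptyset$, this set is $D$ itself. $D$ meets every Laver tree, so every solution lies inside $D$ and is non-arithmetical.
\item If $[T]\neq\emptyset$, then $[T^+]$ contains the body of a Hechler tree (the majorants of a path of $T$). So no Laver tree lies inside $\baire\setminus[T^+]$, and every solution $L$ is disjoint from the set.
\end{itemize}
In the second case hardness comes not from $D$ but from a preliminary reduction to computable $T$ with no hyperarithmetic path. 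For example, replace $T$ by its product with a fixed computable tree that has paths but no hyperarithmetic one. Then any $L$-computable function bounding a path of $T$ forces $L$ to be non-hyperarithmetic.

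Be careful with the decoding. The complement of the set is $[T^+]\cup(\baire\setminus D)$, so disjointness only gives $[L]\cap D\subseteq[T^+]$. It does not give $[L]\subseteq[T^+]$, as the paper's text asserts, and the argument through the leftmost path of $L$ does not go through as written. Instead, search $L$ for a node $\sigma$ all of whose extensions in $\baire$ lie in $D$; one exists because $D$ meets every Laver tree. Let $h$ be the leftmost path of $L$ through $\sigma$. Then $h$ bounds a path of $T$ if and only if $[T]\neq\emptyset$. This is the $\Pi^0_1(h\oplus T)$ statement $\forall n\,\exists\tau\in T\cap\omega^n\,(\tau\le h\upto n)$, which one $\mathsf{LPO}$ query settles. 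With this route your even/odd interleaving machinery is not needed.
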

	\begin{proof}
		We follow the argument for the analogous reduction present in \cite[Theorem 4.20]{marconevalenti}. Let $\Phi$ be the forward functional of the reduction $\UCBaire \leq_{\W} \wlindd$, let $S$ be a computable instance of $\UCBaire$ with no arithmetical solution and let $T \in Y$. Reasoning as in the proof of \cite[Theorem 4.20]{marconevalenti} we may assume that $T$ has no hyperarithmetic path. We compute a name $p$ for the open set $(\baire \setminus [T^+]) \cap \delta_{\bDelta{0}{1}}(\Phi(S))$. We claim that $p \in X$, i.e.\ that $p$ has no arithmetical solution. There are two cases:
		\begin{itemize}
			\item if $[T]=\emptyset$, then $[T^+]=\emptyset$, so that $p$ is a name for $\delta_{\bDelta{0}{1}}(\Phi(S))$. Since any $\lls$ solution of $\delta_{\bDelta{0}{1}}(\Phi(S))$ computes the non-arithmetical path in $[S]$, we get that $p \in X$,
			\item if $[T] \neq \emptyset$, then $[T^+] \in \dom(\wlindp)$. It follows that $\delta_{\bSigma{0}{1}}(p)$ is in $\dom(\wlindp)$, and in particular any Laver tree $L \in \lls(\delta_{\bSigma{0}{1}}(p))$ has $[L] \subseteq \baire \setminus \delta_{\bSigma{0}{1}}(p) \subseteq [T^+]$. So the leftmost path of $L$, which is $L$-computable as $L$ is pruned, is a bound for a path in $T$. Since $T$ has no hyperarithmetic path, it follows that $L$ is non-hyperarithmetic.
		\end{itemize}
		This shows that $p \in X$, and moreover, that $[T] \neq \emptyset$ if and only if, given any $L$ in $\lls(\delta_{\bSigma{0}{1}}(p))$, the leftmost path of $L$ is a bound for some path in $T$. As in the proof of Corollary \ref{cor:abovewf} we obtain $\mathsf{WF} \upto Y \leq_{\W} \mathsf{LPO} \star \lls \upto X$.
	\end{proof}
	
	\begin{proposition}
		$\tCBaire <^{\ari}_{\W} \lls$.
	\end{proposition}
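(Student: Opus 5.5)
We split the statement into its two halves. The reduction $\tCBaire \leq^{\ari}_{\W} \lls$ is Lemma \ref{lem:abovetcb}, so it remains to show $\lls \nleq^{\ari}_{\W} \tCBaire$. Following \cite[Corollary 5.13]{marconevalenti}, we argue by contradiction. Suppose $\lls \leq^{\ari}_{\W} \tCBaire$. By Proposition \ref{prop:summary}(6) there are $n,m$ with $\lls \lew \lim^{[n]} \star \tCBaire \star \lim^{[m]}$. Composing with $\mathsf{LPO}$ and using Lemma \ref{lem:comptrees}, we get $\mathsf{WF}\upto Y \lew \mathsf{LPO} \star \lim^{[n]} \star \tCBaire \star \lim^{[m]}$, where this reduction is restricted to instances of $\lls$ in $X$. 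Note that $\mathsf{LPO} \star \lim^{[n]} \lew \lim^{[n+1]}$.

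The key observation concerns an instance $p \in X$, which has no arithmetical $\lls$-solution. Its image under the arithmetical preprocessing is an instance of $\tCBaire$ that the reduction must use nontrivially. If that $\tCBaire$ instance were ill-founded, then (being arithmetical in $p$, and hence computable in the relevant jump) it would have a path of the appropriate complexity. The point is to show that the $\tCBaire$ instance produced from $p \in X$ must in fact be wellfounded. Otherwise, arbitrary paths would generally be available, and the postprocessing would then yield a solution to $\lls(p)$ that is arithmetical relative to some path. Here we use that a nonempty $\lim^{[m]}(p)$-computable closed set has members low for hyperarithmetic, or more simply we apply the Gandy basis theorem together with the fact that every solution to $p$ is non-hyperarithmetic (as shown in the proof of Lemma \ref{lem:comptrees}). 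That argument really shows that solutions are non-hyperarithmetic, so it yields a contradiction once we choose a path that is hyperarithmetically low. This is delicate. A cleaner route, as in \cite{marconevalenti}, is to note that when the tree is wellfounded, $\tCBaire$ may return any real, in particular a computable one.

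So the plan is as follows. For a computable tree $T$, take $p$ as in Lemma \ref{lem:comptrees} and form the tree $U = \varphi(\lim^{[m]}(p))$. If $[U] = \emptyset$, we feed in an arbitrary computable real. Then the output, and hence the answer to $\mathsf{WF}(T)$, is arithmetical uniformly; that is, it is $\lim^{[k]}$-computable from $T$. If $[U] \neq \emptyset$, the resulting $\lls$-solution computed from a path of $U$ is arithmetical in that path. By Gandy basis we may pick a path $g$ with $\omega_1^g = \ock$ that is $\Sigma^1_1$-definable. Since every solution is non-hyperarithmetic by construction, we cannot immediately conclude, but we get a $\Sigma^1_1$ description. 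Combining both cases yields that $\{T \text{ computable} : [T] \neq \emptyset\}$ is $\Delta^1_1$: it is $\Sigma^1_1$ in the ill-founded-$U$ case and arithmetical in the other, and deciding which case holds is itself $\Pi^1_1$/$\Sigma^1_1$ in a way that must be checked. This contradicts the $\Sigma^1_1$-completeness of ill-foundedness for computable trees.

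The main obstacle is this case distinction. We must ensure that deciding $\mathsf{WF}(T)$ from the composite reduction gives a hyperarithmetical (in fact $\Delta^1_1$) definition, and we must handle the non-uniformity from the choice of $\tCBaire$ solution. I would first try to mirror the proof of \cite[Corollary 5.13]{marconevalenti} directly, replacing their $X$ and $Y$ by those of Lemma \ref{lem:comptrees}. The key property is that every $\tCBaire$-answer leads to a correct $\mathsf{WF}$ output, so the set of computable ill-founded trees becomes definable by a formula of the form ``for all (respectively, there exists) $g$ in the $\tCBaire$-solution set, an arithmetical condition holds''. Both versions are available, making the set $\Delta^1_1$, which is a contradiction.
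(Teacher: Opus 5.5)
There is a genuine gap, and it is exactly the step you flag as ``must be checked''. The set $X$ in Lemma \ref{lem:comptrees} exists to \emph{exclude} the case where the $\tCBaire$-instance is wellfounded. At one point you even say that instance ``must in fact be wellfounded'', which is the opposite of what is needed.

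You do note that on a wellfounded instance $\tCBaire$ may return a computable real. But you use this only to say the $\mathsf{WF}$-answer is then arithmetical. The correct conclusion is that this case is impossible. Here $p\in X$ is computable. If $U$ were wellfounded, $\tCBaire$ could answer $0^\omega$, and the rest of the computation would yield an $\lls(p)$-solution arithmetical in $p$, contradicting $p\in X$. The rest of the computation is $\lim^{[m]}$ before, and computable maps and $\lim^{[n]}$ after.

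So $U$ is illfounded for every such $p$, and on these instances $\tCBaire$ behaves as $\CBaire$. This gives $\lls\upto X\lew\lim^{[n]}\star\CBaire\star\lim^{[m]}\lew\CBaire$, and hence $\mathsf{WF}\upto Y\lew\mathsf{LPO}\star\CBaire\lew\CBaire$. Only at this point does the $\lDelta{1}{1}$ argument legitimately apply. Every $\CBaire$-instance has nonempty body, so ``some $g\in[U]$ gives $1$'' and ``some $g\in[U]$ gives $0$'' are both correct $\lSigma{1}{1}$ definitions. The paper packages the same idea via \cite[Theorem 5.12]{marconevalenti}, reducing to $\lls\nleq_{\W}\stCBaire\times\lim^{(n)}$. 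Your route through Proposition \ref{prop:summary}(6) would work once this step is inserted.

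Without that step your case split does not close. The condition $[U]=\emptyset$ is $\lPi{1}{1}$, so the set of illfounded computable trees only comes out as a Boolean combination of $\lSigma{1}{1}$ and $\lPi{1}{1}$ sets. That is no contradiction. Likewise, quantifying over ``the $\tCBaire$-solution set'' is neither $\lSigma{1}{1}$ nor $\lPi{1}{1}$, since that set is all of $\baire$ when $[U]=\emptyset$.

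In fact your final paragraph cannot be right as stated, because it never uses $X$. It would refute the true reduction $\mathsf{WF}\lew\mathsf{LPO}\star\tCBaire$: apply $\tCBaire$ to $T$ and use $\mathsf{LPO}$ to test whether the answer is a path.

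Finally, the claim that every solution of $p$ is non-hyperarithmetic is false when $[T]=\emptyset$. In that case $p$ names $\delta_{\bDelta{0}{1}}(\Phi(S))$, whose solutions are only guaranteed to be non-arithmetical. The Gandy-basis detour would not help anyway, since a path low for hyperarithmetic is not hyperarithmetic.
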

	\begin{proof}
		The reduction holds by Lemma \ref{lem:abovetcb}. By \cite[Theorem 5.12]{marconevalenti}, it suffices to show that $\lls \nleq_{\W} \mathsf{sTC}_{{\baire}} \times \lim^{(n)}$ for any $n \in \omega$. Again we prove this fact following the proof of the analogous \cite[Theorem 4.20]{marconevalenti}. Reasoning exactly as in the cited proof, we can see that $\lls \leq_{\W} \mathsf{sTC}_{{\baire}} \times \lim^{(n)}$ implies $\lls \upto X \leq_{\W} \CBaire$, with $X$ as in Lemma \ref{lem:comptrees}. This is a contradiction as $\mathsf{LPO} \leq_{\W} \CBaire$, $\CBaire$ is closed under composition and $\mathsf{WF} \upto Y \nleq_{\W} \CBaire$.
	\end{proof}
	\bibliographystyle{alpha}

\end{document}